\let\cite\@gobble
\newenvironment{DIFnomarkup}{}{}
\newcommand*\patchAmsMathEnvironmentForLineno[1]{%
  \expandafter\let\csname old#1\expandafter\endcsname\csname #1\endcsname
  \expandafter\let\csname oldend#1\expandafter\endcsname\csname end#1\endcsname
  \renewenvironment{#1}%
     {\linenomath\csname old#1\endcsname}%
     {\csname oldend#1\endcsname\endlinenomath}}%
\newcommand*\patchBothAmsMathEnvironmentsForLineno[1]{%
  \patchAmsMathEnvironmentForLineno{#1}%
  \patchAmsMathEnvironmentForLineno{#1*}}%
\newtheorem{thm}{Theorem} 
\newtheorem{prop}[thm]{Proposition} 
\theoremstyle{definition}
\newtheorem{example}[thm]{Example} 
\newtheorem{defn}[thm]{Definition} 
\newtheorem{rem}[thm]{Remark}
\newcommand{\field}[1]{ \ensuremath{\mathbb{#1}}}
\newcommand{\T}{\ensuremath{\field{T}}}
\newcommand{\C}{ \ensuremath{\field{C}}}
\newcommand{\R}{ \ensuremath{\field{R}}}
\newcommand{\Q}{ \ensuremath{\field{Q}}}
\newcommand{\N}{ \ensuremath{\field{N}}}
\newcommand{\Z}{ \ensuremath{\field{Z}}}
\newcommand{\eps}{\epsilon}
\newcommand{\mc}{\mathcal}
\newcommand{\msf}{\mathsf}
\newcommand{\mbf}{\boldsymbol} 
\newcommand{\mbb}{\mathbb}
\newcommand{\norm}[1]{ \ensuremath{\left\lVert{#1}\right\rVert}}
\newcommand{\avg}[1]{ \ensuremath{\left\langle{#1}\right\rangle}}
\newcommand{\abs}[1]{ \ensuremath{\left\lvert{#1}\right\rvert}}
\newcommand{\inner}[2]{\ensuremath{\left\langle #1, #2 \right\rangle}}
\DeclareMathOperator{\linspan}{span}
\newcommand{\favg}[2][\omega]{ \ensuremath{\tilde {#2}^{(#1)} }}
\begin{document}
\clearpage
\title{Applied Koopmanism}
\thanks{We dedicate this paper to the memory of Jerrold E.\ (Jerry) Marsden whose support of our research was invaluable.}
\author{Marko \surname{Budi\v{s}i\'c}}
\author{Ryan \surname{Mohr}}
\author{Igor \surname{Mezi\'c}}
\affiliation{Department of Mechanical Engineering, University of California, Santa Barbara}
\date{Nov 29, 2012}
\begin{abstract}
  A majority of methods from dynamical systems analysis, especially those in applied settings, rely on Poincar\'e's geometric picture that focuses on ``dynamics of states''. While this picture has fueled our field for a century, it has shown difficulties in handling high-dimensional, ill-described, and uncertain systems, which are more and more common in engineered systems design and analysis of ``big data'' measurements.
  This overview article presents an alternative framework for dynamical systems, based on the ``dynamics of observables'' picture. The central object is the Koopman operator: an infinite-dimensional, linear operator that is nonetheless capable of capturing the full nonlinear dynamics. The first goal of this paper is to make it clear how methods that appeared in different papers and contexts all relate to each other through spectral properties of the Koopman operator. The second goal is to present these methods in a concise manner in an effort to make the framework accessible to researchers who would like to apply them, but also, expand and improve them. Finally, we aim to provide a road map through the literature where each of the topics was described in detail.
  We describe three main concepts: Koopman mode analysis, Koopman eigenquotients, and continuous indicators of ergodicity. For each concept we provide a summary of theoretical concepts required to define and study them, numerical methods that have been developed for their analysis, and, when possible, applications that made use of them.
  The Koopman framework is showing potential for crossing over from academic and theoretical use to industrial practice. Therefore, the paper  highlights its strengths,  in applied and numerical contexts. Additionally, we point out areas where an additional research push is needed before the approach is adopted as an off-the-shelf framework for analysis and design.
\end{abstract}
\pacs{}
\keywords{Koopman operator, spectral analysis, Koopman modes, ergodic averages, eigenquotient, continuous ergodicity}

\maketitle
\begin{quotation} 
A majority of methods from dynamical systems analysis, especially those in applied settings, rely on Poincar\'e's geometric picture that focuses on ``dynamics of states''. While this picture has fueled our field for a century, it has shown difficulties in handling high-dimensional, ill-described, and uncertain systems, which are more and more common in engineered systems design and analysis of ``big data'' measurements.
  This overview article presents an alternative framework for dynamical systems, based on the ``dynamics of observables'' picture.
We present an overview of several approaches to studying dynamical systems using the Koopman operator, which holds promise to resolve these issues. The dynamics are analyzed by looking at evolutions of functions on the state space, rather than directly at state space trajectories.  The evolution can be understood by expanding the function into a basis of eigenfunctions of the Koopman operator. The first approach is based on the Koopman modes, which generalize linear mode analysis from linear systems to nonlinear systems, while preserving global nonlinear features of the system, unlike, e.g., linearizations based on Taylor- and Fourier- expansions. The second approach identifies coherent structures in flows. An equivalence relation between points in the state space can be defined using spectral properties of the Koopman operator, where equivalent points correspond to initial conditions that behave statistically the same with respect to any observable. The third approach we present introduces continuous quantifications of ergodicity and mixing, concepts existing in ergodic theory that are traditionally treated as binary notions.  Throughout the paper, we highlight examples from the literature using each of these concepts.  Examples are taken from diverse areas such as fluid mechanics, fluid mixing, energy efficiency of buildings, power systems, and Unmanned Aerial Vehicle path-planning for search-and-rescue. A common trait of all the methods is that they do not require access to an analytical model of the system; the spectral properties of the Koopman operator can be constructed from measured or simulated data.
\end{quotation}

\section{Introduction}\label{sec:introduction}
Currently, dynamical systems analysis and design primarily uses the geometric picture, as put forth by Poincar\'e in his work on the three body problem.  Much of the framework is built around notions from differential geometry, trajectories and invariant manifolds.  Such an approach has met with success in a variety of settings and, at this point, one hardly needs to justify the use of geometric theory when working on a particular problem.  

However, the geometric viewpoint is ill-suited to many of the situations that are of interest in real systems.  For example, for systems possessing hyperbolic regimes, the unstable manifolds give rise to locally exponentially divergent trajectories.
Any noise or uncertainty in the system will lead to multiple possible trajectories for an initial condition, with the width of the set trajectories initially expanding exponentially.  In such cases, questions about the behavior of a specific trajectory are difficult to answer.

Systems with a large number of dimensions can be problematic as well, since many of the geometric arguments are only valid in a low number of dimensions, e.g., Bendixson's Criterion for determining the non-existence of periodic orbits in the plane.  In some cases, these arguments can be extended, with difficulty, to an arbitrary number of dimensions.  Even in these cases, however, a practical implementation limits them to a moderate number of dimensions.  To handle high-dimensional systems, special symmetries or other conditions are required in order to effectively reduce the dimension to a manageable size.  Furthermore, without access to explicit ODEs, even basic geometric analysis is difficult to apply.  If dynamical systems theory is to become an important field in the context of pressing problems such as ``big data'', tools need to be developed that are capable of handling high-dimensional, uncertain, and ill-described systems, as well as systems for which past time-evolution data is available, but for which no simple mathematical description can be determined.\cite{Jones:2001tj}

This article presents a viewpoint that is at the intersection of applied ergodic theory and operator theory.  These two fields can be used in applied settings to analyze and design dynamical systems, with many of the aforementioned difficulties being handled with a certain amount of elegance.  In fact, when we study dynamical systems through certain linear operators, the \textit{full nonlinear dynamics can be captured within a linear setting}.  This linear setting allows the power of spectral analysis to be brought to bear on a (nonlinear) problem without sacrificing any information as required by other linearization techniques.  Contrast this with the traditional spectral approach that only determines geometry locally in the state space.  Additionally, in theory, the operator-theoretic approach works equally well whether the original state-space is low- or high-dimensional; the same techniques apply to both cases.  The framework is also well suited to studying noisy systems because the primary object of interest is no longer the trajectory.  Finally, and perhaps most significantly, the operators involved can be constructed,  approximated, or analyzed using only simulation or experimental measurement data.  This allows a certain black-box approach to the analysis which is quite useful in real problems where the practitioner may not have full knowledge of the system's internals.

As with any technique, however, there is a tradeoff in order to gain the above advantages.  The operator-theoretic picture has no immediate connection to our physical intuition, making its meaning more difficult to comprehend.  One's viewpoint must change from considering the evolution of points in the state space to considering the evolution of functions.
Additionally, the new approach is inherently infinite-dimensional, even when the state space is finite-dimensional.  This sacrifice is what allows the full information of a nonlinear system to be contained within a linear setting.  Because of this, the implementation of any approximation is a more delicate issue.  Finally, the associated numerical techniques are underdeveloped.  Most of our approaches employ direct computations which are little more than numerical implementations of proofs.

The two main candidates for the study of systems via operators are the Koopman operator and the Perron-Frobenius operator.  In appropriate function spaces, they are duals to each other, so theoretically, there should not be any distinction in working with one as opposed to the other.  However, as mentioned previously, we must always include applied considerations.  Questions arise such as how do we construct or represent the chosen operator from the problem description and given data?  How well does a finite approximation represent the ideal theoretical picture?  What part of intuition gained is due to numerical artifacts and what is real?  

The Perron-Frobenius operator represents a ``dynamics of densities'' picture; it looks at groups of trajectories.  One can think of this as watching the evolution of a mass distribution under the action of a flow.  From a numerical perspective, construction of the operator relies on selecting a set of initial conditions and simulating forward for only a short time period, thus avoiding the compounding of numerical time-integration errors.  Due to these short bursts, transient dynamics can be captured very well. However, much attention has been focused on computing invariant densities,\cite{Dellnitz:1999tr} which are infinite-time objects, through approximating the Perron-Frobenius operator by a Markov chain.  The number of simulated initial conditions is dictated by the need to sample the region of interest well.  In high-dimensions, both short- and long-time dynamics simulations require a mesh on the entire space.  This can be true even in the case of a low-dimensional attractor.  If we have \emph{a priori} knowledge of the low-dimensional subspace the attractor lives in, then the mesh size can be restricted.  However, for an arbitrary system, this knowledge may not be initially available, thus requiring the full mesh.

On the other hand, the Koopman operator presents a picture for the ``dynamics of observables''.  The difference in viewpoints between the Perron-Frobenius and Koopman operators is similar to the Eulerian versus the Lagrangian viewpoint in fluid mechanics, with the Koopman picture corresponding to the Lagrangian viewpoint. What is meant is that measurements are made along trajectories.  For the Koopman operator, the numerical construction relies on potentially fewer initial conditions, but requires longer run-times, which is more suitable to physical experiments.  For example, when testing a jet engine, it is started from a relatively small number of initial conditions and run it over a long time rather than preparing thousands of initial conditions and running the engine for a few seconds for each initial condition.  Due to the long run-times required, the asymptotics are well-understood.  However, more research is needed to understand the transients.

To visualize high-dimensional dynamical systems, we often restrict our attention to one, or a few, two-dimensional cross-sections in the state space and look at the invariant structures intersecting that slice.  With the Perron-Frobenius operator, it is difficult to directly compute invariant densities on the slice of interest, since, in principle, it requires a computation of the invariant density for the entire state space as an intermediate step.  For the Koopman operator, invariant objects are attached to initial conditions, making it well-suited to visualizing structures on an arbitrary 2D cross-section in the state space.  Initial conditions can be easily prepared on the slice and the invariants directly computed.  In such cases, the number of initial conditions required to understand the dynamics is significantly reduced.

While operator methods, and specifically the Koopman and Perron-Frobenius operators, have much potential to deal with applied problems, these methods are all but absent from the applied and industrial settings, with due exceptions.\cite{Mezic:2000tm,Mezic:2004is,Eisenhower:2010tv}  To speed up the adoption of operator techniques in these domains, any new methodology needs to be able to leverage already existing data, instead of proposing both a new methodology \textit{and} a new way to collect data.  The ``dynamics of observables'' perspective, and specifically the Koopman operator, is used as it deals with measurements, i.e., observables, which are well-understood both theoretically and computationally.  On the other hand, the Perron-Frobenius techniques would require working with representations of densities, which are often singular, especially in well-behaved engineered systems.

In this paper, we intend to describe three concepts, all under the umbrella of ``dynamics of observables'', that show how this theory can be made useful for analysis and design.  Contributions can be split between theoretical and applied contributions.

\subsection*{Theoretical contributions}

1. Dynamical evolution of a system can be studied by looking at what is termed \emph{Koopman mode analysis}.  The concept is similar to normal mode analysis familiar from linear vibration theory.  Koopman mode analysis starts with a choice of a set of linearly independent observables, or equivalently a vector-valued observable. The Koopman operator \(U\) is then analyzed through its action on the subspace spanned by the chosen observables.  The observables are decomposed into projections onto the eigenspaces of $U$, and the evolution is a sum of terms composed of a product of three terms:
  \begin{inparaenum}[i)]
    \item a part that is time-dependent and is determined by the eigenvalue (or frequency) associated with the eigenspace;
    \item an eigenfunction of $U$, which is a function of the initial conditions;
    \item the vector of the coefficients of the projection of the observables onto the eigenspaces, with the coefficients only being functions of the chosen observables.
  \end{inparaenum}In this way, spectral analysis can be performed on nonlinear systems.  This analysis is also used for model reduction.\cite{Mezic:2004is,Mezic:2005ji}

2. The notions of the \emph{ergodic quotients and eigenquotients} allow the Koopman operator to be used for the extraction and analysis of invariant and periodic structures in the state space.\cite{Budisic:2009iy}  The points in the state space are grouped into invariant sets using level sets of eigenfunctions of the Koopman operator. Instead of set-theoretic framework, this approach is lifted to the analytical setting using the ergodic quotient and eigenquotient formalism. The eigenquotients are studied as subsets of particular Sobolev spaces, where their geometry gives insight into the structure of the state space, in spirit similar to analysis of Hamiltonian systems via Morse theory of the associated energy functions.

3. In the standard interpretation of ergodic theory, mixing and ergodicity are treated as binary concepts: a system is either mixing/ergodic or it is not. Both mixing and ergodicity can be formulated using spectral invariants of the Koopman operator. From a finite-time evolution of an arbitrary system, we can quantify how close its spectral invariants are to the ``ideal'' case, e.g., mixing or ergodic, and in this way formulate \emph{continuous indicators of ergodicity and mixing}.  The ergodicity defect\cite{Scott:2009wh} and the mixing norm\cite{Mathew:2005eq} are examples of continuous indicators that extend the corresponding binary notions.  Such a relaxation brings the concepts of ergodicity and mixing into an engineering context, allowing, e.g., the use of the indicators as optimization criteria. We present a unified explanation of the concepts that have previously appeared in literature.\cite{Mathew:2005eq, Mathew:2007vq, Mathew:2009et, Mathew:2011ev,Scott:2009wh,Rypina:2011ec}

\subsection*{Numerical techniques and applications}

Numerical computation of the objects in the theory uses elements from three different areas.  Fourier analysis based methods are useful for computing Koopman modes for dynamics on the attractor in addition to being essential for the construction of the eigenquotient.  A variant of the standard Arnoldi algorithm based on companion matrices is also useful for computing part of the spectrum of the Koopman operator, a basic element of Koopman mode analysis.  This variant does not require an explicit representation of the operator and only requires data, sequences of vectors coming from either simulations or experiments.

Koopman mode analysis has seen applications in fluids mechanics to extract spatial structures for the flow.\cite{Rowley:2009ez,Schmid:2010ba,Seena:2011ft,Chen:2012jh}  Koopman modes have also found applications in the analysis of coherency and instabilities for power systems\cite{Susuki:2010ei,Susuki:2011ef,Susuki:2011jq,Susuki:2012gk} and in the field of building energy efficiency where they have been used for model validation and data analysis.\cite{Eisenhower:2010tv,Georgescu:2012tt}

To compute the eigenquotients numerically, a set of observables is averaged along trajectories started at different initial conditions, obtaining a finite-dimensional representation of any eigenquotient. Such representations are analyzed with the aid of a diffusion maps algorithm, \cite{Coifman:2005bk,Coifman:2006cy} which computes a change of coordinates, \emph{the diffusion modes}, for the eigenquotient. In the limit where infinitely many initial conditions were simulated for infinite time, such coordinate change would  render any consequent analysis independent of the choice of the observables averaged during the computation. The scale-ordering of diffusion coordinates makes it  practical to obtain a low-dimensional approximation of the eigenquotients. 

The averaging along trajectories is used again to formulate continuous indicators for ergodicity and mixing, where the rate of approach of the averages along finite-time trajectories to the infinite limit is indicative of the underlying dynamics. Based on such indicators, the dynamics of the flows can be designed to match a particular statistical behavior, with applications in path-planning for vehicles\cite{Mathew:2009et} and mixing of fluids on micro-scales.\cite{Mathew:2007vq}

\section{Notation and Terminology}
\label{sec:notation}

We start off by fixing some notation and terminology.  Let us denote the state space by $M$ and define dynamics on it by the iterated map $T:M \to M$.  Note that the set $M$ can be an arbitrary set (possessing no structure) and $T$ can be an arbitrary map on this set.  Then the abstract dynamical system is specified by the couple $(M,T)$. Note that standard texts on ergodic theory study a specific case when \(M\) is a measurable space, with a $\sigma$-algebra \(\mathfrak B\), and \(T\) is \(\mathfrak B\)-measurable.  Additionally, transformation \(T\) is typically assumed to be measure-preserving, i.e.,  there exists a measure \(\mu\), \emph{the invariant measure}, such that for any \(S \in \mathfrak B\) 
\begin{equation}
  \label{eq:invariant-measure}
  \mu(S) = \mu( T^{-1} S ),
\end{equation}
with \(T^{-1}S\) understood as the pre-image of $S$. The measure \(\mu\) does not necessarily have a density function associated with it. For the formulation of the theory in this paper, we do not require the measurable framework, although when answering more specific questions, we might restrict ourselves to it, as it is the one most commonly encountered in applied dynamical systems.

We will be concerned with the behavior of observables on the state space.  To this end, we define an observable to be a function $f : M \to \C$, where $f$ is an element of some function space $\mathcal F$.  For now, it is not necessary to specify any structure for $\mathcal F$.  A concrete interpretation of an observable is that of a sensor probe for the dynamical system in question; we can access information about the system via the evolution of the observable's values.  Instead of tracking the trajectory $\{ p, T(p), T^{2}(p),\dots \}$, we now track the trace $\{ f(p), f(T(p)), f(T^{2}(p)), \dots \}$.  The description of the dynamics can then be concisely written down in the form of state and output equations, familiar to control theorists:
	\begin{equation}
	\begin{aligned}
  	p_{n+1} &= T(p_n) \\
  	v_{n} &= f(p_n).
	\end{aligned}\label{eq:state-output}
	\end{equation}
The dynamical systems community mainly focuses on state space trajectories $\{p_n\}$, while the control systems community usually studies systems that have an additional input or disturbance terms, with the functions $T$ and $f$ taking particular forms that are common in engineered systems. 

We define the (discrete-time) Koopman operator, $U_T : \mathcal F \to \mathcal F$, as 
	\begin{equation}
	[U_Tf](p) = f( T(p) ), \label{eq:koopman}
	\end{equation}
i.e., it is a composition, $U_T f = f \circ T$, of the observable $f$ and the iterated map $T$.  When it is obvious which transformation gives rise to the Koopman operator, we will drop the dependence on $T$ from the notation and write $U$ instead of $U_T$.  When $\mathcal F$ is a vector space, $U$ is a linear operator. 

When \(M\) is a finite set, $U$ is a finite-dimensional operator and can be represented by a matrix. However, when \(M\) is finite- or infinite-dimensional, \(U\) is generally infinite dimensional. Much of the time, we only have access to a particular collection of observables $\{ f_{1},\dots, f_{K} \} \subset \mathcal F$; these could be physically relevant observables arising naturally from the problem, or a (subset of a) function basis for $\mathcal F$.  We can extend the Koopman operator to this larger space in the natural way:  If $F = (f_1, \dots, f_K)^{\msf T} \in \mathcal F^{K}$, then $U_K : \mathcal F^{K} \to \mathcal F^{K}$ is defined as
	\begin{equation}
	[U_K F](p) := \begin{bmatrix}
	[Uf_{1}](p) \\
	\vdots \\
	[Uf_{K}](p)
	\end{bmatrix}. \label{eq:koopman-vv}
	\end{equation}
Hence $U_K = \bigotimes_{1}^{K} U$.  With an abuse of notation, we generally write $U_K$ as $U$.  The space $\mathcal F^{K}$ is the space of $\C^{K}$-valued observables on $M$.  In this context, $\C^{K}$ is referred to as the output space.  More generally, we can consider vector valued observables, $F : M \to V$, where $V$ is some vector space.  For example, when analyzing the heat equation on a periodic box $\mbb B$, the state space can be regarded as the sequence space of Fourier coefficients and an observable $F:M \to L^{2}(\mbb B,dx)$ can be regarded as mapping between a sequence of Fourier coefficients (the state space $M$) and a temperature distribution on $\mbb B$ (the real-valued space $L^2(\mbb B,dx)$). We will revisit this setup in more detail in Example \ref{ex:heat}.

The above notion of the Koopman operator was defined in the context of discrete-time dynamical systems.  Often though, working in the discrete-time setting poses an unneeded restriction; in many systems, the natural formulation of the dynamics is with respect to a continuous time variable.  The Koopman operator can be extended to deal with continuous-time dynamical systems, or even more generally, event-based dynamical systems.

Assume we have the continuous-time dynamical system $\dot{p} = T(p)$.  In this context, there is not just \emph{the} Koopman operator, but a semigroup of operators $\{ U^{t} \}_{t\in\R^{+}}$ given by a generator $U$.  We call  the semigroup $\{ U^{t} \}$ the \emph{Koopman semigroup}.  We explicitly define the action of the semigroup on the observable $f \in \mathcal F$ as
	\begin{equation}
	[U^{t}f](p) = f( \Phi^{t}(p) ).
	\end{equation}
Here $\Phi^{t}(p) \equiv \Phi(p,t)$ is the flow map that takes an initial condition $p \in M$ and maps it to the solution at time $t$ of the initial value problem (IVP) having initial condition $p(0) =p$; i.e., for a fixed $p_{0} \in M$, the trajectory $\{ \Phi(p_{0}, \cdot) \}_{t\geq 0}$ is a solution of the IVP $\dot{p} = T(p),\, p(0) = p_{0}$.  The generator of the Koopman semigroup is defined by
	\begin{equation}
	[Uf] := \lim_{t \to 0} \frac{U^{t}f - f}{t},
	\end{equation}
where the limit is taken in the strong sense.\cite{Lasota:1994vt}

The following examples describe the above concepts in certain simple, concrete cases.
\begin{example}[Cyclic group]
Let $M = \{ e, a, a^{2} \}$ be a cyclic group of order 3 $(a^3 \equiv e)$.  Define $T:M \to M$ by $T(p) = a \cdot p$.  Hence the entire state space is a periodic orbit of period 3.  Let $\mathcal F$ be the $\C$-valued functions on $M$.  Clearly, the space of observables is $\C^3$. Let $f_{1},f_{2},f_{3}$ be the indicator functions on $e,a,a^{2}$, respectively:
	\begin{equation}
	\begin{aligned}
	f_{1}(p) &= \begin{cases}
	1, & p=e,\\
	0, & p\neq e
	\end{cases}, \\
	f_{2}(p) &= \begin{cases}
	1, & p=a,\\
	0, & p\neq a
	\end{cases}, \\
	f_{3}(p) &= \begin{cases}
	1, & p=a^2,\\
	0, & p\neq a^2
	\end{cases}.
	\end{aligned}
	\end{equation}
These form a basis for $\mathcal F$.  The action of the Koopman operator on this basis is given as

	\begin{equation}
	\begin{aligned}
	&[Uf_{1}](p) = f_{1}(a\cdot p) = f_{3}(p), \\ 
	&[Uf_{2}](p) = f_{2}(a\cdot p) = f_{1}(p), \\ 
	&[Uf_{3}](p) = f_{3}(a\cdot p) = f_{2}(p).
	\end{aligned}
	\end{equation}
For an arbitrary observable $f \in \mc F$ given by $f = c_{1}f_{1} + c_{2}f_{2} + c_{3}f_{3}$, with $c_{i} \in \C$, we have that 
	\begin{equation*}
	Uf =  c_{1} f_{3} + c_{2} f_{1} + c_{3} f_{2}.  
	\end{equation*}
Then, the matrix representation of $U$ in the $\{f_1, f_2, f_3\}$ basis is given by
	\begin{equation}\label{eq:koopman-ex-group}
	U\begin{bmatrix} c_{1} \\ c_{2} \\ c_{3} \end{bmatrix}
	 = \begin{bmatrix}
	0 & 1 & 0 \\
	0 & 0 & 1 \\
	1 & 0 & 0
	\end{bmatrix}
	\begin{bmatrix}
	c_{1} \\
	c_{2} \\
	c_{3} 
	\end{bmatrix}.
	\end{equation}
In this case, \eqref{eq:koopman-ex-group} gives the full action of the Koopman operator on $\mathcal F$.
 \qed\end{example}

\begin{example}[Linear, diagonalizable systems]\label{ex:diagonalizable}
Let $M = \R^{d}$ and define $T:M \to M$ by
	\begin{equation}
	(T(\mbf x))_{i} = \mu_{i} x_{i},
	\end{equation}
where $\mbf x = (x_{1},\dots,x_{d})^{\msf T} \in M$ and $\mu_{i} \in \R$.  Let $\mc F$ be the space of $\C$-valued functions on $\R^{d}$.  Let $\{ \mbf b_{1}, \dots, \mbf b_{d} \} \subset M$ be a basis for $M$ and define $f_{i}(\mbf x) = \inner{\mbf b_{i}}{\mbf x}$, where $\inner{\cdot}{\cdot}$ is the inner product on $\R^{d}$.  The action of $U:\mc F \to \mc F$ on $f_{i}$ is
	\begin{equation}
	\begin{aligned}
	&[Uf_{i}](\mbf x) = \inner{\mbf b_{i}}{T(\mbf x)}=  [b_{i,1}, \dots, b_{i,d} ] \begin{bmatrix}
	\mu_{1} x_{1} \\
	\vdots \\
	\mu_{d} x_{d}
	\end{bmatrix} \\
	&\qquad= 
	[b_{i,1}, \dots, b_{i,d} ] \begin{bmatrix}
	\mu_{1} & 0 & \cdots & 0 \\
	0 & \mu_{2} & \cdots & 0 \\
	\vdots & \vdots & \ddots & \vdots \\
	0 & 0 & \cdots & \mu_{d} 
	\end{bmatrix}
	\begin{bmatrix} x_{1} \\ \vdots \\ x_{d} \end{bmatrix} .
	\end{aligned}
	\end{equation}
Let $\mc F^{d} = \bigotimes_{1}^{d} \mathcal F$ and define $U_{d}$ on $\mc F^{d}$ as in \eqref{eq:koopman-vv}.  Then for $F = (f_{1},\dots, f_{d})^{\msf T}$,
	\begin{align} \label{eq:koopman-rd-example-vector-valued}
	\begin{aligned}
	&[U_{d} F](\mbf x) \\
	&\quad = 
	\begin{bmatrix}
	b_{1,1} & \cdots & b_{1,d} \\
	\vdots & \ddots & \vdots \\
	b_{d,1} & \cdots & b_{d,d}
	\end{bmatrix} 
	\begin{bmatrix}
	\mu_{1} & 0 & \cdots & 0 \\
	0 & \mu_{2} & \cdots & 0 \\
	\vdots & \vdots & \ddots & \vdots \\
	0 & 0 & \cdots & \mu_{d} 
	\end{bmatrix}
	\begin{bmatrix} x_{1} \\ \vdots \\ x_{d} \end{bmatrix}.
	\end{aligned} 
	\end{align}
Note that \eqref{eq:koopman-rd-example-vector-valued} is just the action of the Koopman operator on the \emph{particular} observable $F = (f_{1},\dots,f_{d})^{\msf T}$, not the full action of the Koopman operator on the entire observable space, $\mathcal F$. The main point is that the Koopman operator is reducible, provided $T$ leaves some subspace of $\mc F$ invariant.
	
As a special case, we can take the $\mbf b_{i}$'s to be the canonical basis vectors having zeros everywhere, except for a 1 in the $i^{th}$ entry.  In this case, the functions $\{f_{i}\}$ represent the canonical projections onto the coordinates of $\mbf x$; i.e. $f_{i}(\mbf x) = x_{i}$.  Then the action of the Koopman operator with respect to these \emph{particular} observables is given as
	\begin{equation}
	[U_d F](\mbf x) = 
	\begin{bmatrix}
	\mu_{1} & 0 & \cdots & 0 \\
	0 & \mu_{2} & \cdots & 0 \\
	\vdots & \vdots & \ddots & \vdots \\
	0 & 0 & \cdots & \mu_{d} 
	\end{bmatrix} 
	\begin{bmatrix} x_{1} \\ \vdots \\ x_{d} \end{bmatrix}.
	\end{equation}
 \qed\end{example}

The next example shows that the Koopman operator formalism can easily handle state spaces that are mixtures of discrete and continuous domains.

\begin{example}[Mixed state space]
Let $\T^2 = [0,2\pi)\times[0,2\pi)$ and $G = \{0,1,2\}$. Define $M = \T \times G$ and let the dynamics be given by
	\begin{equation}
	\begin{aligned}
	I_{k+1} &= I_k + \left( \frac{s_k}{2}\right) K \sin \theta_k, \mod 2\pi \\
	\theta_{k+1} &= \theta_k + I_{k+1}, \mod 2\pi \\
	s_{k+1} &=  s_k + 1, \mod 3
	\end{aligned}
	\end{equation}
where $\mbf p_k = (I_k,\theta_k) \in \T^2$, $s_k \in G$, and $K > 0$.  The dynamics are given by the standard map cycling between the unperturbed, shear-flow case and two perturbed cases.  The ``perturbation dynamics'' are driven by a group action.

Let $\mc F_{\T^2} = \{ f : \T^2 \to \C\}$ be the set of all functions mapping the torus into the complex numbers.  We do not assume that the functions in $\mc F_{\T^2}$ have any type of regularity or algebraic properties; for the moment they are completely arbitrary.  Similarly, let $\mc F_{G} = \{ g : G \to \C\}$ be the set of all functions from the group $G$ into the complex numbers.   One possible choice for the space of observables $\mc F$ on $M$ is the set $\mc F = \{ h = g\cdot f \mid f \in \mc F_{\T^2}, g \in \mc F_{G} \}$.  Hence observables on $M$ are pointwise products of functions on $\T^2$ and $G$ and map the mixed state space into $\C$.  The Koopman operator can easily be defined as 
	\begin{equation*}
	[Uh](\mbf p_k,s_k) = g(s_{k+1})\cdot f(\mbf p_{k+1}).
	\end{equation*}
Another possible choice for $\mc F$ could be the set of all the observables that are functions of only $I$ and $\theta$.  This a a subset of the previous choice by taking $g$ to be a constant function.  This is a natural choice in the case that the ``perturbation dynamics'' (the dynamics on $G$), cannot be measured.

\qed\end{example}

%

\begin{example}[Partial differential equations]
\label{ex:heat}
Consider the 2D heat equation on $\mbb B = [-\frac{1}{2}, \frac{1}{2}]\times[-\frac{1}{2},\frac{1}{2}]$ with periodic boundary conditions:
	\begin{equation}
	\frac{\partial \mbf u(\mbf x,t)}{\partial t} = c^{2}\, \nabla^{2} \mbf u(\mbf x,t).
	\end{equation}
Assuming $\mbf u, \nabla^{2} \mbf u \in L^{2}(\mbb B,d\mbf x)$, $\mbf u(\mbf x,t)$ can be expanded in a trigonometric basis:
	\begin{equation}\label{eq:heat-expansion}
	\mbf u (\mbf x,t) = \sum_{\mbf j \in \Z^{2}} a_{\mbf j}(t) e^{i 2\pi \mbf j\cdot \mbf x},
	\end{equation}
where $\mbf j \cdot \mbf x$ is the dot product of $\mbf j$ and $\mbf x$.
A Galerkin projection onto this basis yields 
	\begin{equation}
	\dot{a}_{\mbf j}(t) = -4\pi^{2}c^{2} \norm{\mbf j}_{2}^{2} a_{\mbf j}(t).
	\end{equation}
Thus, we have the continuous-time, infinite-dimensional analogue of example \ref{ex:diagonalizable}. We could proceed with exhibiting the Koopman semigroup or induce a discrete-time evolution from the continuous-time flow.  In the latter case, fix a time step $h>0$ and get
	\begin{equation}\label{eq:heat-induced-discrete}
	a_{\mbf j}(t_{n+1}) = \exp(-4\pi^{2}c^{2} \norm{\mbf j}_{2}^{2}h) \, a_{\mbf j}(t_{n}), 
	\end{equation}
where $t_{n} := nh$.	

The state space, $M = \ell^{2}(\Z^{2})$, is the space of Fourier coefficients; if $\mbf a \in M$, then $\mbf a = (a_{\mbf j_{1}}, a_{\mbf j_{2}}, \dots)$, where $(\mbf j_{1},\mbf j_{2},\dots)$ is some ordering of $\Z^{2}$. Let \eqref{eq:heat-induced-discrete} define  the induced discrete-time evolution map, $T_{h} : M \to M$.  At this point, we could exactly reduce this problem to example \ref{ex:diagonalizable} by restricting our attention to a finite-dimensional subspace of $M$.   

Let the observable space, $\mathcal F$, be the $\C$-valued functions on $M$.  A family of observables on $M$, parameterized by $\mbf x\in \mbb B$, is given by \eqref{eq:heat-expansion}; namely, fixing $\mbf x\in \mbb B$, we have for any $\mbf a \in M$
	\begin{equation}\label{eq:temp-observable}
	f_{\mbf x}(\mbf a) = \sum_{\mbf j \in \Z^{2}} a_{\mbf j} e^{i 2\pi \mbf j\cdot \mbf x}.
	\end{equation}
Hence, the temperature at a point $\mbf x \in \mbb B$ is a linear observable on the space of Fourier coefficients. 

Assume the temperature can be measured at a finite number of points $(\mbf x_{1},\mbf x_{2},\dots,\mbf x_{K})$ in $\mbb B$ and take the finite collection of observables $(f_{\mbf x_{1}},\dots,f_{\mbf x_{K}})$ defined by \eqref{eq:temp-observable}.  Then the action of the Koopman operator on this set of observables is
	\begin{equation}\label{eq:heat-koopman}
	U \begin{bmatrix}
	f_{\mbf x_{1}}(\mbf a) \\
	\vdots \\
	f_{\mbf x_{K}}(\mbf a)
	\end{bmatrix}
	= \sum_{\mbf j\in \Z^{2}} \mu_{\mbf j}\,  a_{\mbf j} \begin{bmatrix}
	 e^{i 2\pi \mbf j\cdot \mbf x_{1}}  \\
	\vdots \\
	e^{i 2\pi \mbf j\cdot \mbf x_{K}}
	\end{bmatrix} .
	\end{equation}
where $\mu_{\mbf j} = \exp(-4\pi^{2}c^{2} \norm{\mbf j}_{2}^{2}h)$.
 \qed\end{example}

\section{Koopman mode analysis}\label{sec:km-analysis}

\subsection{Eigenfunctions and Koopman Modes of \texorpdfstring{$\mbf U$}{%
U
}%
}
\label{subsec:koopman-modes}
Thus far, we have avoided putting structure on the function space $\mc F$.  When $\mc F$ is a vector space, the Koopman operator is linear.  It, therefore, makes sense to study its spectral properties as this will give us insight into the dynamics of the system, similar to the case of linear finite-dimensional systems.  We make the further assumptions that $\mc F$ is a Banach space under some norm, $\norm{\cdot}$, and that $U$ is a bounded, and hence continuous, operator on this space.  

Let $\{ \phi_{1}, \dots, \phi_{n}\}$ be a set of eigenfunctions of $U$, where $n=1,2, \dots,$ or $\infty$, not necessarily forming a complete basis set for $\mathcal F$.  In the discrete-time case, we have that
	\begin{equation}\label{eq:eigenvector-discrete-time}
	[U\phi_{i}](p) = \lambda_{i}\phi_{i}(p).
	\end{equation}
In the continuous-time case, the $\lambda$'s are eigenvalues of the generator $U$ of the Koopman semigroup, $\{U^{t}\}$.  The eigencondition is then
	\begin{equation}\label{eq:eigenvector-cont-time}
	[U^{t} \phi_{i}](p) = e^{\lambda_{i} t} \phi_{i}(p),
	\end{equation}
so that $\{e^{\lambda_{i}}\}$ are the eigenvalues for the Koopman semigroup.  

We first note two simple properties of eigenfunctions: their algebraic structure (Prop. \ref{prop:efunc-semigroup}) and their role in spectral equivalence of the systems (Prop. \ref{prop:top-conj}).

\begin{prop}[Algebraic structure of eigenfunctions under products]
\label{prop:efunc-semigroup}
Assume $\mc F$ is a subset of all $\C$-valued functions on $M$ that forms a vector space which is closed under pointwise products of functions.  Then, the set of eigenfunctions forms an Abelian semigroup under pointwise products of functions.  In particular, if $\phi_{1}, \phi_2 \in \mc F$ are eigenfunctions of $U$ with eigenvalues $\lambda_1$ and $\lambda_2$, then $\phi_1 \phi_2$ is an eigenfunction of $U$ with eigenvalue $\lambda_1 \lambda_2$.  

Furthermore, if $p \in \R^+$ and $\phi$ is an eigenfunction with eigenvalue $\lambda$, then $\phi^{p}$ is a eigenfunction with eigenvalue $\lambda^p$, where $\phi^p(x) := (\phi(x))^p$.  If $\phi$ is an eigenfunction that vanishes nowhere and $r \in \R$, then $\phi^r$ is an eigenfunction with eigenvalue $\lambda^r$.  The eigenfunctions that vanish nowhere form an Abelian group.
\end{prop}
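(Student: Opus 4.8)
The plan is to lean entirely on the one defining feature of the Koopman operator that turns all four claims into essentially bookkeeping: because $U$ acts by composition with $T$, it is multiplicative on pointwise products. Concretely, for any $f,g \in \mc F$ one has $[U(fg)](x) = (fg)(T(x)) = f(T(x))\,g(T(x)) = [Uf](x)\,[Ug](x)$, so $U(fg) = (Uf)(Ug)$. I would state and prove this identity first, since it is the engine for everything that follows.

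Given this identity, the first claim is immediate: if $U\phi_1 = \lambda_1\phi_1$ and $U\phi_2 = \lambda_2\phi_2$, then $U(\phi_1\phi_2) = (\lambda_1\phi_1)(\lambda_2\phi_2) = \lambda_1\lambda_2\,\phi_1\phi_2$, and $\phi_1\phi_2 \in \mc F$ by the closure hypothesis. Commutativity and associativity of the product are inherited pointwise from $\C$, so the eigenfunctions form an Abelian semigroup. The one point I would flag here is that a genuine eigenfunction must be nonzero, whereas a pointwise product of two nonzero functions can vanish identically; I would either restrict to products that remain nonzero or note that the zero set of an eigenfunction is dynamically constrained (it is carried into itself by $T$ when $\lambda \neq 0$), so this degeneracy is the only thing needing a remark.

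For the power statements I would first dispatch integer powers $\phi^n$, $n \in \N$, by induction from the product rule, obtaining eigenvalue $\lambda^n$. For a general real exponent $p \in \R^+$ I would fix a branch and set $\phi^p := \exp(p\log\phi)$, then verify the eigencondition directly: $[U\phi^p](x) = (\phi(T(x)))^p = (\lambda\phi(x))^p$, which should collapse to $\lambda^p\,\phi^p(x)$; note that at a zero of $\phi$ the value is $0^p = 0$, which is harmless for $p>0$. For negative real exponents (the third claim) the same computation works once $\phi$ vanishes nowhere, since then $\phi^r$ is well defined and nonvanishing for every $r \in \R$.

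Finally, the group statement follows by collecting pieces: the nowhere-vanishing eigenfunctions are closed under products (a product of two nonvanishing functions is nonvanishing, with eigenvalue $\lambda_1\lambda_2$), the constant function $\one$ is a nowhere-vanishing eigenfunction with eigenvalue $1$ serving as identity, and $\phi^{-1}$ (the $r=-1$ case) supplies the inverse with eigenvalue $\lambda^{-1}$; the relation $\one = \phi\,\phi^{-1} \in \mc F$ guarantees the identity actually lies in the set. I expect the main obstacle to be analytic rather than algebraic: making sense of non-integer powers so that the power law $(\lambda\phi)^p = \lambda^p\phi^p$ holds \emph{exactly} rather than only up to a unimodular branch factor, and ensuring $\phi^p$ genuinely lands in $\mc F$ — the hypothesis delivers closure under products (hence integer powers) directly, so the real-power claims implicitly require $\mc F$ to be closed under the relevant functional calculus.
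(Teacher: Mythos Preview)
Your proposal is correct and follows essentially the same route as the paper: both arguments reduce everything to the multiplicativity of $U$ under composition, verify $U(\phi_1\phi_2)=\lambda_1\lambda_2\,\phi_1\phi_2$ directly, invoke the constant function $\one$ as identity, and handle real powers by the formal computation $(\phi(Tx))^p=(\lambda\phi(x))^p=\lambda^p\phi^p(x)$. You are in fact more scrupulous than the paper about two points it simply glosses over --- the branch-cut issue in $(\lambda\phi)^p=\lambda^p\phi^p$ for non-integer $p$, and whether $\phi^p$ actually lies in $\mc F$ when only closure under products is assumed --- so your caveats are well placed rather than gaps.
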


\begin{proof}
Assume $U\phi_1 = \lambda_1 \phi_1$ and $U\phi_2 = \lambda_2 \phi_2$ and put $\psi(x) = \phi_1(x)\phi_2(x)$.  In discrete time, 
	\begin{align*}
	[U\psi](x) &= \psi(T(x)) = \phi_1(T(x))\phi_2(T(x)) \\
	&= [U\phi_1](x)\, [U\phi_2](x) = \lambda_1 \lambda_2 \phi_1(x)\phi_2(x) \\
	&= \lambda_1 \lambda_2 \psi(x).
	\end{align*}
Hence, the set of eigenfunctions is closed under pointwise products.  An analogous computation holds for continuous time.

Note that constant functions are eigenfunctions at eigenvalue 1.  Hence the constant function that is equal to 1 everywhere is an eigenfunction of $U$ and acts as the identity element.  Combining this with the above closure property and standard properties of pointwise products of functions shows that the set of eigenfunctions is an Abelian semigroup.

Let $U\phi = \lambda\phi$ and fix $p \in \R^+$.  Then
	\begin{align*}
	[U\phi^p](x) &= \phi^p(Tx) = (\phi(Tx))^p = ( \lambda \phi(x) )^p \\
	&= \lambda^p \phi^p(x).
	\end{align*}
If $\phi$ vanishes nowhere, then $\phi^{-1}(x) := 1 /\phi(x)$ is well-defined.  Then, the above chain of identities remains valid for $r \in \R$ replacing $p \in \R^+$.  Hence, the Abelian semigroup of eigenfunctions also contains all of its inverses.  Therefore, the set of eigenfunctions vanishing nowhere is an Abelian group.
\end{proof}

\begin{example}[Analytic observables of stable/unstable systems]
\label{ex:decaying-analytic}
Let $\dot{x} = \lambda x$, with $x, \lambda \in \C$ and $\abs{\lambda} \neq 1$.  Then $\Phi^t(x) = e^{\lambda t}x$.  Let $\phi(x) = x$.  Then
	\begin{align*}
	[U^{t} \phi](x)& = \phi(\Phi^{t}(x)) = \phi(e^{\lambda t}x) \\
	&= e^{\lambda t}x = e^{\lambda t} \phi(x),
	\end{align*}
which implies that $\phi$ is an eigenfunction of $U$.  By proposition \ref{prop:efunc-semigroup}, any $\phi_n(x) := (\phi(x))^n = x^n$ is an eigenfunction of $U$ with eigenvalue $\lambda^n$.

Let $f(x)$ be an analytic function.  Then $f(x) = \sum c_n x^n = \sum c_n \phi_n(x)$, where $c_n = \frac{1}{n!}\frac{d^n f(0)}{dx^n}$.  Therefore,
	\begin{equation*}
	[Uf](x) = \sum c_n [U\phi_n](x) = \sum \lambda^n c_n \phi_n(x).
	\end{equation*}
\qed\end{example}

The second property shows the spectral equivalence of topologically conjugate transformations.
\begin{prop}[Spectral equivalence of topologically conjugate systems]\label{prop:top-conj}
Let $S: M \to M$ and $T: N \to N$ be topologically conjugate; i.e., there exists a homeomorphism $h : N \to M$ such that $S \circ h = h \circ T$.  If $\phi$ is an eigenfunction of $U_S$ with eigenvalue $\lambda$, then $\phi \circ h$ is an eigenfunction $U_T$ at eigenvalue $\lambda$.
\end{prop}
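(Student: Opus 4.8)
The plan is to set $\psi := \phi \circ h$ and verify the eigenfunction equation $U_T \psi = \lambda \psi$ by a direct chain of substitutions, since the statement is essentially a bookkeeping exercise that converts the intertwining relation $S \circ h = h \circ T$ into the corresponding intertwining of the two Koopman operators. Before computing, I would first check that $\psi$ is a legitimate, nontrivial element of the observable space on $N$: because $h : N \to M$ is a homeomorphism and $\phi : M \to \C$, the composition $\psi = \phi \circ h : N \to \C$ is a well-defined $\C$-valued observable, and since $h$ is a bijection and $\phi$ is a nonzero eigenfunction, $\psi$ does not vanish identically. This guarantees that $\psi$ qualifies as a genuine eigenfunction rather than the trivial zero solution.

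The core step is the computation itself. Fixing an arbitrary $q \in N$ and unwinding the definition \eqref{eq:koopman} of the Koopman operator $U_T$, I would write
\[
  [U_T \psi](q) = \psi(T(q)) = \phi\bigl(h(T(q))\bigr).
\]
At this point I would invoke the conjugacy relation in the form $h \circ T = S \circ h$ to replace $h(T(q))$ by $S(h(q))$, obtaining $[U_T\psi](q) = \phi\bigl(S(h(q))\bigr) = [U_S\phi](h(q))$. Finally I would apply the eigenfunction hypothesis $U_S\phi = \lambda\phi$ evaluated at the point $h(q)$, which yields $[U_S\phi](h(q)) = \lambda\,\phi(h(q)) = \lambda\,\psi(q)$. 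Since $q$ was arbitrary, this establishes $U_T\psi = \lambda\psi$, as desired.

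There is no substantial obstacle here; the only place requiring care is the direction in which the conjugacy is applied. The identity must be used as $h \circ T = S \circ h$ (so that a $T$-step in $N$ is traded for an $S$-step in $M$ through $h$), not via its inverse $h^{-1}$, and getting this orientation right is what makes the eigenvalue come out as $\lambda$ rather than forcing an appeal to $h^{-1}$. The verification that $\psi$ is nonzero, noted above, is the other small point needed for the conclusion to mean ``eigenfunction'' in the strict sense.

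If the continuous-time version is also wanted, I would remark that the identical argument carries over: using the flow conjugacy together with the eigencondition \eqref{eq:eigenvector-cont-time}, one computes $[U_T^t \psi](q) = \psi(\Phi_T^t(q)) = \phi(\Phi_S^t(h(q))) = e^{\lambda t}\phi(h(q)) = e^{\lambda t}\psi(q)$, so $\phi \circ h$ is again an eigenfunction of the conjugate semigroup at the same eigenvalue.
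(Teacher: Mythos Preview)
Your proof is correct and follows essentially the same chain of substitutions as the paper's own argument, just read in the opposite direction (the paper starts from $\lambda(\phi\circ h)(y)$ and unwinds to $[U_T(\phi\circ h)](y)$, while you start from $[U_T\psi](q)$ and arrive at $\lambda\psi(q)$). Your added remarks on nontriviality of $\psi$ and the continuous-time analogue are fine extras that the paper omits.
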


\begin{proof}
Fix $x \in M$ and let $y\in N$ be such that $x = h(y)$.  The result follows from the chain of equalities:
	\begin{align*}
	\lambda (\phi \circ h)(y) &=  
    \lambda \phi(x) = [U_S \phi](x) = \phi[ S(x) ] \\ 
    &= \phi\{ S[h(y)] \} = \phi\{ h[T(y) ] \} \\
    &= [U_T (\phi \circ h)](y).
	\end{align*}
\end{proof}

\begin{example}[Topological conjugacy of diagonalizable systems]
Let $\mbf y^{(k)} = (y_1^{(k)},y_2^{(k)})^{\msf T}$, where the superscript $(k)$ indexes time, and let $\mbf y^{(k+1)} = T\mbf y^{(k)}$, where $T$ is a matrix.  Assume that $T$ has eigenvectors $\mbf v_1,\mbf v_2$ at eigenvalues $\lambda_1,\lambda_2$ such that $\mbf v_i \neq \mbf e_j$, where $\mbf e_j$ is the canonical basis vector.  If $V = [\mbf v_1,\, \mbf v_2]$, then after defining new coordinates $\mbf x^{(k)} = (x_{1}^{(k)}, x_{2}^{(k)})^{\msf T} = V^{-1}\mbf y^{(k)}$, we get
	\begin{equation*}
	\begin{bmatrix}
	x_{1}^{(k+1)} \\
	x_{2}^{(k+1)}
	\end{bmatrix} = \begin{bmatrix}
	\lambda_1 & 0 \\
	0 & \lambda_2
	\end{bmatrix} \begin{bmatrix} x_{1}^{(k)} \\ x_{2}^{(k)} \end{bmatrix} =: \Lambda \begin{bmatrix} x_{1}^{(k)} \\ x_{2}^{(k)} \end{bmatrix}
	\end{equation*}
The maps $\Lambda$ and $T$ are topologically conjugate by $\Lambda V ^{-1} = V^{-1} T $.

Note, that $\phi_1(\mbf x^{(k)}) = x_1^{(k)}$ and $\phi_2(\mbf x^{(k)}) = x_2^{(k)}$ are eigenfunctions of $U_\Lambda$ at eigenvalues $\lambda_1$ and $\lambda_2$ respectively.  By proposition \ref{prop:efunc-semigroup} and example \ref{ex:decaying-analytic}, we have that $\phi_{m,n}(\mbf x^{(k)}) := [\phi_1(\mbf x^{(k)})]^m [\phi_2(\mbf x^{(k)})]^n \equiv [x_1^{(k)}]^m [x_2^{(k)}]^n$ is an eigenfunction of $U_\Lambda$ at eigenvalue $\lambda_1^m \lambda_2^n$.  By proposition \ref{prop:top-conj}, $\phi_{m,n} \circ V^{-1}$ is an eigenfunction of $U_T$ at eigenvalue $\lambda_1^m \lambda_2^n$, where $V^{-1}$ has taken the place of $h$ in proposition \ref{prop:top-conj}.
\qed
\end{example}

Now, assume $f \in \mathcal F$ is an observable in the closed, linear span of a set of linearly independent eigenfunctions $\{ \phi_{i} \}_1^n$ (recall \(n\) could be finite or infinite). Then
	\begin{equation}
	f(p) = \sum_{i=1}^{n} c_{i}(f)\phi_{i}(p),
	\end{equation}
for some constants $c_{i}(f) \in \C$.  The dynamics of $f$ are particularly simple:
	\begin{equation}
	\begin{aligned}
	[Uf](p) &= f(T(p)) = \sum_{i=1}^{n} c_{i}(f)\phi_{i}(T(p)) \\
	&= \sum_{i=1}^{n} c_{i}(f)[U\phi_{i}](p) \\
	&= \sum_{i=1}^{n} \lambda_{i} c_{i}(f) \phi_{i}(p),
	\end{aligned}
	\end{equation}
and similarly
	\begin{equation}
	[U^{m}f](p) = \sum_{i=1}^{n} \lambda_{i}^{m} c_{i}(f) \phi_{i}(p).
	\end{equation}
The extension to vector-valued observables $F = (f_{1},\dots, f_{K})^{\msf T}$, where each $f_{i}$ is in the closed linear span of the eigenfunctions, is trivial:
	\begin{equation}\label{eq:KM}
	\begin{aligned}
	[U^{k}F](p) &= \sum_{i=1}^{n} \lambda_{i}^{m} \phi_{i}(p) \begin{bmatrix} c_{i}(f_{1}) \\ \vdots \\ c_{i}(f_{K})\end{bmatrix} \\
	&= \sum_{i=1}^{n} \lambda_{i}^{m} \phi_{i}(p) C_{i}(F),
	\end{aligned}
	\end{equation}
where $C_{i}(F) := [c_{i}(f_{1}),\dots c_{i}(f_{K})]^{\msf T}$.  Motivated by \eqref{eq:KM}, we have the following definition.

\begin{defn} \label{def:koopman-mode}
Let $\phi_i$ be an eigenfunction for the Koopman operator corresponding to the eigenvalue $\lambda_i$.  Given a vector-valued observable $F: M \to V$, the Koopman mode, $C_{i}(F)$, corresponding to $\phi_{i}$ is the vector of the coefficients of the projection of $F$ onto $\linspan\{\phi_i\}$.
\end{defn}

\begin{rem}
The importance of defining Koopman modes with respect to eigenfunctions, rather than eigenvalues, becomes apparent when we consider vector-valued observables and non-simple eigenvalues.  For example, let $\lambda$ have a two-dimensional eigenspace $E_\lambda$ and let $\phi_1$ and $\phi_2$ be a basis for it.  Let $f_1 = c_1 \phi_1 + c_2 \phi_2$ and $f_2 = c_3 \phi_2$ be scalar-valued.  Define $F = (f_1, f_2)^{\msf T}$.  The Koopman modes corresponding to $\phi_1$ and $\phi_2$ are
	\begin{align*}
	C_1(F) &= \begin{bmatrix} c_1 \\ 0 \end{bmatrix} & \text{and} &  & C_2(F) &= \begin{bmatrix} c_2 \\ c_3 \end{bmatrix},
	\end{align*}
respectively.  Note that both of these Koopman modes have $\lambda$ as the associated eigenvalue.  Therefore, if the Koopman mode was defined with respect to the eigenvalue $\lambda$, then it would not be a well-defined object.  However, when the eigenspace is one-dimensional, there is no confusion in saying ``the Koopman mode corresponding to $\lambda$''.
\qed\end{rem}

\begin{rem}
The definition of Koopman modes can be carried over with a slight modification to generalized eigenfunctions.  When an observable can be expanded in terms of only eigenfunctions, the Koopman modes are time-invariant objects.  However, when a generalized eigenfunction is present in the expansion, the Koopman modes become time-dependent objects.  For example, let $\phi$ be an eigenfunction and $\psi$ a generalized eigenfunction of $U$ corresponding to $\lambda \neq 0$:
	\begin{equation*}
	U\phi = \lambda \phi \quad \text{and} \quad U\psi = \phi + \lambda \psi.
	\end{equation*}
Let $F = C_1(F) \phi + C_2(F) \psi$ be a vector-valued observable.  Then
	\begin{align*}
	U^k F &= \left(C_1(F)\lambda^k + C_2(F) k\lambda^{k-1}\right) \phi + \lambda^k C_2(F) \psi \\
	&= \lambda^k \left( C_1(F) + \frac{k}{\lambda} C_2(F)\right) \phi + \lambda^k C_2(F) \psi
	\end{align*}
for $k\geq 0$.  The Koopman mode for $\phi$ at time $k$ is the time-dependent quantity $C_1(F) + k\lambda^{-1} C_2(F)$.

However, since Koopman modes of generalized eigenfunctions have not been treated in the literature, \emph{whenever we refer to Koopman modes in this paper, we implicitly mean a Koopman mode corresponding to an eigenfunction.}
\qed\end{rem}

To be completely explicit, the eigenfunctions are $\C$-valued observables on the state space $M$ and the eigenpairs $(\lambda_{i},\phi_{i})$ depend only upon the dynamics $(M,T)$ and the function space $\mathcal F$, \emph{not} on a particular observable.  The $C_{i}(\cdot)$'s can be thought of as a mapping from the observable space into a vector space $V$; for example, in \eqref{eq:KM} above, $C_{i}$ maps $\mc F$ into $\C^{K}$.  The map $F \mapsto \phi_{i}C_{i}(F)$ is then a vector-valued projection operator onto the subspace $\linspan\{\phi_{i}\}$.

\begin{rem} \label{rem:simplicity}
Given \(\mathcal F\) as a Banach space of scalar functions, one can ask for conditions on the geometric multiplicity of \(\lambda\), i.e., dimension of the eigenspace \(E_{\lambda}\). The general answer to this question depends on the dynamics \(T\) and on the particular space of observables \(\mathcal F\) chosen. We can give an answer for the case most studied in literature, when \(T\) preserves a measure \(\mu\) with \(\mathcal F = L^{2}(M, \mu)\). In this case, all the eigenvalues of the associated Koopman operator \(U\) are on the unit circle. 

When \(T\) is an ergodic transformation (i.e., when any measurable set \(S\) invariant under \(T\) is either of zero or full measure), all eigenvalues of \(U\) are simple (see \citet[, \S 2.4][]{Petersen:1989uv}). When \(T\) is not ergodic, the state space can be partitioned into ergodic sets: minimal invariant sets \(S\) such that the restriction \(T|_S :S \to S\) to any \(S\) is ergodic. Since all ergodic sets \(S\) are disjoint, they support mutually singular functions from \(\mathcal F\). As a result, the number of linearly independent eigenfunctions of \(U\) at any particular eigenvalue \(\lambda\) is bounded from above by the number of ergodic sets in the state space.  The number of such ergodic sets is highly dependent on the character of dynamics. The partition into ergodic sets, the ergodic partition, will be discussed in more detail in Section \ref{sec:level-sets}.

Note that the computational method discussed later in Section \ref{subsubsec:DMD} assumes fixing an initial condition \(p_{0} \in M\), which effectively selects the ergodic set $S\subset M$ which contains the point \(p_{0}\). Then the Koopman operator $U|_S$ acting on $L^{2}(S,\mu|_S)$ has simple eigenvalues, where $\mu|_S$ is the ergodic measure on the ergodic component $S$.

There is an interesting relation between ergodic dynamics and Proposition \ref{prop:efunc-semigroup} when the space of observables \(\mc F = L^{2}(M,\mu)\) is defined with respect to an ergodic measure \(\mu\).  Given an eigenfunction $\phi \in \mc F$ with eigenvalue $\lambda$, Proposition \ref{prop:efunc-semigroup} guarantees that $\phi^n$ is an eigenfunction with eigenvalue $\lambda^n$, as long as $\phi^n \in \mc F$.  If the eigenvalue is periodic ($\lambda^k = \lambda$ for some $k\geq 2$), then $\phi^k$ lies in the eigenspace $E_\lambda$.  Ergodicity guarantees the simplicity of $E_\lambda$, and hence there exists a non-zero \(c \in \C\), such that  $\norm{\phi^k - c\, \phi}_{2} = 0$, in \(L^{2}(M,\mu)\) norm.
%
\qed\end{rem}

\begin{example}[Linear systems\cite{Rowley:2009ez}]\label{ex:linear-sys}
We first look at the case when the dynamics are given by a linear map, $A : M \to M$, on some finite-dimensional, inner-product space $M$; i.e., $\mbf x_{m+1} = A\mbf x_{m}$. Suppose $A$ has a complete set of eigenvectors, denoted by $\{ \mbf v_{1},\dots, \mbf v_{n} \}$, with corresponding eigenvalues $\{\lambda_{1},\dots,\lambda_{n}\}$.  Let $\{\mbf w_{j}\}_{1}^{n}$ be the eigenvectors of the adjoint $A^{*}$ with eigenvalues $\{\overline{\lambda}_{j}\}_{1}^{n}$, normalized so that $\inner{\mbf v_{j}}{\mbf w_{k}} = \delta_{jk}$.  Consider the observable defined as $\phi_{j}(\mbf x) = \inner{\mbf x}{\mbf w_{j}}$.  Then 
	\begin{equation}
	\begin{aligned}
	[U\phi_{j}](\mbf x) &= \phi_{j}(A\mbf x) = \inner{A\mbf x}{\mbf w_{j}} = \inner{\mbf x}{A^{*}\mbf w_{j}}\\
	&= \inner{\mbf x}{\overline{\lambda}_{j} \mbf w_{j}} = \lambda_{j}\inner{\mbf x}{\mbf w_{j}} = \lambda_{j} \phi_{j}(\mbf x).
	\end{aligned}
	\end{equation}
We see that $\phi_{j}$ is an eigenfunction of the Koopman operator.  However, the functions $\{ \phi_j\}_{1}^{n}$ do not exhaust all of the eigenfunctions of $U$.  For example, by proposition \ref{prop:efunc-semigroup}, $\phi_j(\mbf x)\phi_k(\mbf x) = \inner{\mbf x}{\mbf w_{j}}\inner{\mbf x}{\mbf w_{k}}$ is an eigenfunction of $U$.  In particular, $g(\mbf x) := \inner{\mbf x}{\mbf w_{j}}^{k}$ is an eigenfunction of $U$ with eigenvalue $\lambda^{k}$ for any $k\in \N$.

Let $F$ be the vector-valued observable defined as $F(\mbf x) = \mbf x$ when $\mbf x \in \linspan \{\mbf v_{1},\dots, \mbf v_{\ell} \}$, where $\ell < n$, and zero otherwise; i.e., $F$ acts as the identity on a subspace of $M$ spanned by the first $\ell$ eigenvectors and has the complement of that subspace as its kernel.  Then
	\begin{equation}
	F(\mbf x) = \sum_{j=1}^{\ell} \inner{\mbf x}{\mbf w_{j}} \mbf v_{j} = \sum_{j=1}^{\ell} \phi_{j}(\mbf x) \mbf v_{j}
	\end{equation}
and
	\begin{equation}
	[U^{m}F](\mbf x) = \sum_{j=1}^{\ell} \lambda_{j}^{m} \phi_{j}(\mbf x) \mbf v_{j}
	\end{equation}
From these expressions, we see that the eigenvector, $\mbf v_{j}$, of the linear map $A$ is the Koopman mode, $C_{j}(F)$, corresponding to $\phi_{j}$.
 \qed\end{example}

\begin{example}[Rotations of the circle]
Let the state space be the interval $M = [0,1)$.  Let $\omega \in (0,1)$ and define $T:M\to M$ by
	\begin{equation}
	T(p) = p + \omega, \mod 1.
	\end{equation}
Note that there is a natural identification of $M$ with the circle $\mathbb{T} = \R/2\pi\Z$ and the functions on $M$ with $2\pi$-periodic functions on $\R$.  It is well-known that if $\omega \in \Q$, then every initial condition is periodic and if $\omega$ is irrational, then the trajectory starting from any initial condition densely fills $M$.  Note that the dynamics preserve the Lebesgue measure.  Let $\mathcal F = L_{\C}^{1}(M)$, be the space of Lebesgue integrable $\C$-valued functions on $M$ and consider the observable $\phi_{n}(p) = e^{i 2\pi n p }$, $n \in\Z$.  Then
	\begin{equation}
	\begin{aligned}
	[U\phi_{n}](p) &= \phi_{n}(T(p)) = e^{i 2\pi n(p+\omega)} \\
	&= e^{ i 2 \pi n\omega}\, \phi_{n}(p).
	\end{aligned}
	\end{equation}
Therefore, for any $n \in \Z$, $\phi_{n}$ is an eigenfunction of $U$ with eigenvalue $\lambda_{n} = e^{i2\pi n\omega}$.  Since the trigonometric polynomials are dense in $L^{1}(\mathbb{T})$, then for $f_{\ell}(p) = \sum_{n \in \Z} \hat{f}_{\ell}(n) e^{i 2\pi n p}  \in L^{1}(\mathbb{T})$,
	\begin{equation}
	[Uf_{\ell}](p) = \sum_{n\in \Z} \hat{f}_{\ell}(n) e^{i 2\pi n\omega} \phi_{n}(p).
	\end{equation}
If $F = (f_{1},\dots,f_{K})^{\msf{T}}$ is the vector-valued observable, then
	\begin{equation}
	[UF](p) = \sum_{n \in \Z} e^{i 2\pi n\omega} \phi_{n}(p) \begin{bmatrix} 
	\hat{f}_{1}(n) \\ \vdots \\ \hat{f}_{K}(n) 
	\end{bmatrix}.
	\end{equation}
Hence the vectors of Fourier coefficients are the Koopman modes of the system.
 \qed\end{example}

\begin{example}[Partial differential equations]
We continue with the heat equation example from above (example \ref{ex:heat}).  Recall that a map was defined on the space of Fourier coefficients by
	\begin{equation}\label{eq:T-galerkin}
	(T(\mbf a))_{\mbf j} = \exp(-4\pi^{2}c^{2} \norm{\mbf j}_{2}^{2}h) a_{\mbf j}.
	\end{equation}
Note that the canonical coordinate projections, $\phi_{\mbf j}(\mbf a) := a_{\mbf j}$, are eigenfunctions for the Koopman operator, with eigenvalues $\lambda_{\mbf j} = \exp(-4\pi^{2}c^{2} \norm{\mbf j}_{2}^{2}h)$, by the computation,
	\begin{equation}
	\begin{aligned}
	[U\phi_{\mbf j}](\mbf a) &= \phi_{\mbf j}(T(\mbf a)) = (T(\mbf a))_{\mbf j}\\
	&= \exp(-4\pi^{2}c^{2} \norm{\mbf j}_{2}^{2}h) a_{\mbf j} \\
	&= \exp(-4\pi^{2}c^{2} \norm{\mbf j}_{2}^{2}h) \phi_{\mbf j}(\mbf a).
	\end{aligned}
	\end{equation}
For the observable $f_{\mbf x}(\mbf a) := \sum_{\mbf j \in \Z^{2}} a_{\mbf j} e^{i 2\pi \mbf j\cdot \mbf x}$, we get 
	\begin{equation*}
	[U^{m}f_{\mbf x}](\mbf a) = \sum_{\mbf j \in \Z^{2}} \lambda_{\mbf j}^{m} \phi_{\mbf j}(\mbf a) e^{i 2\pi \mbf j\cdot \mbf x}.
	\end{equation*}
Suppose we can only measure the temperature at a finite number of locations $\{\mbf x_{1},\dots, \mbf x_{k}\}$, then for $F = (f_{\mbf x_{1}},\dots, f_{\mbf x_{K}})^{\msf T}$,
	\begin{equation}
	[U^{m}F](\mbf a) = \sum_{\mbf j \in \Z^{2}} \lambda_{\mbf j}^{m} \phi_{\mbf j}(\mbf a) \begin{bmatrix} e^{i 2\pi \mbf j\cdot \mbf x_{1}} \\ \vdots \\ e^{i 2\pi \mbf j\cdot \mbf x_{K}} \end{bmatrix}.
	\end{equation}
In the expression above, each Koopman mode,
	\begin{equation}
	C_{\mbf j}(F) = \begin{bmatrix} e^{i 2\pi \mbf j\cdot \mbf x_{1}} \\ \vdots \\ e^{i 2\pi \mbf j\cdot \mbf x_{K}} \end{bmatrix},
	\end{equation}
is just a ``shape'' function on the physical space $\mbb B = [ -1/2, 1/2] \times [-1/2, 1/2]$.  This stresses the point that the eigenfunctions are defined on the state space $M$ while the Koopman modes are functions in the output space.
 \qed\end{example}

%

The development thus far has only focused on the case when an observable is in the closed linear span of some set of eigenfunctions of the Koopman operator.  No assumption was made on whether this set was a complete set for $U$ or even if $U$ possessed a complete set of eigenfunctions.  One could ask what conditions we could impose on the system that are sufficient to guarantee that \(U\)  has a spectral decomposition. This is the case for measure-preserving dynamical systems, as we now explain.

Let $\mc A \subset M$ be the attractor of the dynamical system and $\mu$ the unique invariant measure supported on $\mc A$.  Often, $\mu$ will be a so-called physical measure.  These types of measures exhibit the important property
	\begin{equation}\label{eq:B-regular}
	\frac{1}{n} \sum_{k=0}^{n-1} [U^k f](p) \to \int_M f\, d\mu
	\end{equation}
for any continuous observable $f:M \to \C$ and for Lebesgue-almost every $p \in M$ belonging to a positive Lebesgue measure set $V \subset M$ containing the attractor (see \citet{Young:2002vc} for a more detailed discussion).  Such measures are important in applications since they guarantee the existence of well-defined time-averages even when an experiment starts with initial conditions not on the attractor.  In such cases, we can restrict our attention to the dynamics and observables on the attractor and recover all the asymptotic behavior of the system.

The situation on the attractor is quite nice when we consider the function space $\mc F = L^2(\mc A, \mu)$.  The restriction of the dynamics to the the attractor, $T\vert_{\mc A} : A \to A$, can be shown to be invertible $\mu$-almost everywhere.  The restriction of the Koopman operator to the attractor, $U\vert_{\mc A} : L^2(\mc A, \mu) \to L^2(\mc A, \mu)$, can then be defined by $U\vert_{\mc A}f = f \circ T\vert_{\mc A}$.  In this case, the operator is unitary\cite{Koopman:1931ug,Petersen:1989uv}, implying that all of the eigenvalues lie on the unit circle and the eigenfunctions are orthogonal\cite{Petersen:1989uv,Mezic:2004is,Mezic:2005ji}.

Since $U\vert_{\mc A}$ is unitary, there exists a spectral resolution\cite{Petersen:1989uv}
	\begin{equation}\label{eq:spectral-resolution-on-attractor}
	U\vert_{\mc A}f = \int_{S^1} \lambda\, dE(\lambda)f
	\end{equation}
where $E$ is a projection-valued Borel measure on the unit circle; i.e., for any Borel set $S$ in the unit circle, $E(S)$ is a projection operator.  The measure $E$ is supported on the spectrum of $U\vert_{\mc A}$.  $E$ can be decomposed into two measures, $E_p$ and $E_c$, that are supported on the point spectrum and the continuous part of the spectrum, respectively.  For any $f \in L^2(\mc A, \mu)$, the spectral resolution becomes
	\begin{equation}\label{eq:spectral-decomp-mpt}
	\begin{aligned}
	U\vert_{\mc A}^k f &= \sum_{j} e^{i2\pi\omega_j k} P_{j}f \\
	&\quad+ \int_{0}^{1} e^{i 2\pi \theta k}\, dE_{c}(\theta) f
	\end{aligned}
	\end{equation}
where $P_{j}: L^2(\mc A, \mu) \to L^2(\mc A, \mu)$ is the orthogonal projection onto the the eigenspace corresponding to the eigenvalue $\lambda_j = e^{i2\pi \omega_j}$ and $E_c$ is the projection-valued measure corresponding to the continuous part of the spectrum.  Either term on the right-hand side of \eqref{eq:spectral-decomp-mpt} could be zero depending on whether the operator has no point spectrum or no continuous part of the spectrum.   When the eigenvalues are simple, we get for a vector-valued observable $F = (f_{1},\dots, f_{K})^{\msf T} \in \bigoplus_{i=1}^K L^2(\mc A, \mu)$,
	\begin{equation}\label{eq:spectral-decomp-vector-valued-1}
	\begin{aligned}
	[U\vert_{\mc A}^k F](p) &= \sum_{j} e^{i2\pi\omega_j k} \phi_{j}(p) C_{j}(F) \\
	&\quad+ \int_{0}^{1} e^{i 2\pi \theta k}\, dE_{c}(\theta) F(p).
	\end{aligned}
	\end{equation}
where $\phi_{j}$ is the eigenfunction corresponding to the eigenvalue $\lambda_j = e^{i2\pi\omega_j k}$ and we have used $P_{j}F(p) = \phi_{j}(p) C_{j}(F)$ (when $\lambda_i$ is non-simple, this identity does not necessarily hold).  Finally, we note that the constant functions on the attractor are eigenfunctions of $U\vert_{\mc A}$ at eigenvalue 1 and that $P_0 f := \int_{\mc A} f\, d\mu$ defines a projection onto the constant functions.  Then, \eqref{eq:spectral-decomp-vector-valued-1} becomes
	\begin{equation}\label{eq:spectral-decomp-vector-valued-2}
	\begin{aligned}
	[U\vert_{\mc A}^k F](p) &= \int_{\mc A} F(p)\, d\mu(p)\\
	&\quad+ \sum_{ \{j :\, \omega_j \neq 0\} } e^{i2\pi\omega_j k} \phi_{j}(p) C_{j}(F) \\
	&\quad+ \int_{0}^{1} e^{i 2\pi \theta k}\, dE_{c}(\theta) F(p),
	\end{aligned}
	\end{equation}
(for further details on this decomposition consult \citet{Mezic:2005ji}).  

Therefore, for the case of measure-preserving transformations or those systems possessing a physical measure, the asymptotic dynamics of the Koopman operator are given by the contributions of three components: (1) the average value of the observable, (2) the portion admitting a Koopman mode expansion (the part corresponding to the point spectrum), and (3) the contribution of the continuous part of the spectrum.  The third component, $dE_{c}(\theta) F(p)$, we refer to as the Koopman mode distribution, or the KM distribution for short.  Whereas the Koopman mode expansion (the second component in the decomposition) is fairly well-understood with respect to its relation to the physics of a problem, the contribution of the KM distribution does not enjoy the same level of understanding and has received little attention, so far, in the literature.

\subsection{Computation of Koopman Modes}
\label{subsec:KM-comp}

In the examples that were presented thus far, it was fairly easy to determine the eigenpairs of the Koopman operator and the corresponding Koopman modes.  For a general system, however, things will not be so easy.  This section will discuss a few methods to compute the projection of an observable onto the eigenspaces of the Koopman operator, from both the theoretical and numerical viewpoints.

\subsubsection{Theoretical Results}

The first tool is given by the following theorem.   It is a special case of a result found in \citet{Yosida:1995ul}.

\begin{thm}\label{thm:krengel-mean-ergodic-thm}
Let $\mc F$ be a Banach space and $U:\mc F \to \mc F$.  Assume $\norm{U} \leq 1$.  Let $\lambda$ be an eigenvalue of $U$ such that $\abs{\lambda} = 1$.  Let $\hat{U} = \lambda^{-1}U$ and define
	\begin{equation*}
	A_{K}(\hat{U}) = \frac{1}{K} \sum_{k=0}^{K-1} \hat{U}^k.
	\end{equation*}
Then $A_{K}$ converges in the strong operator topology to the projection operator on the subspace of $\hat{U}$-invariant function; i.e, onto the eigenspace $E_{\lambda}$ corresponding to $\lambda$.  That is, for all $f \in \mc F$,
	\begin{equation}\label{eq:banach-avg-projection}
	\lim_{K\to \infty} A_{K} f = \lim_{K\to \infty} \frac{1}{K} \sum_{k=0}^{K-1} \hat{U}^k f = P_{\lambda}f.
	\end{equation}
where $P_\lambda: \mc F \to E_{\lambda}$ is a projection operator.
\end{thm}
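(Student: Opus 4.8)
The plan is to recognize this as the classical (Yosida--Kakutani) mean ergodic theorem applied to the power-bounded operator $T := \hat U = \lambda^{-1}U$. First I would record that, since $\abs{\lambda}=1$, we have $\norm{T^k} = \norm{U^k}\le\norm{U}^k\le 1$, so the Ces\`aro averages satisfy $\norm{A_K}\le 1$ uniformly in $K$. I would also identify the target subspace: $Tf=f$ is equivalent to $Uf=\lambda f$, so $\ker(I-T)=E_\lambda$, the eigenspace named in the statement.

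The proof then splits $\mc F$ according to $T$. On the invariant subspace $\ker(I-T)=E_\lambda$ each $A_K$ acts as the identity, so $A_K f = f \to f$ trivially. On the range $(I-T)\mc F$, a telescoping computation gives $A_K(I-T)g = \tfrac1K(I-T^K)g$, whose norm is at most $2K^{-1}\norm{g}\to 0$; by the uniform bound $\norm{A_K}\le 1$ and an $\eps/3$ argument this extends to the closure $\overline{(I-T)\mc F}$, on which $A_K f\to 0$. Thus $A_K$ converges on the closed subspace $E_\lambda + \overline{(I-T)\mc F}$ to the projection onto $E_\lambda$ along $\overline{(I-T)\mc F}$; moreover $E_\lambda \cap \overline{(I-T)\mc F} = \{0\}$, since on the intersection $A_K f$ would have to converge both to $f$ and to $0$, so this projection $P_\lambda$ is well defined.

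The crux is to show that this decomposition exhausts $\mc F$, i.e.\ that $A_K f$ converges for \emph{every} $f$. Here I would use the identity $f - A_K f = (I-T)h_K$ with $h_K := \tfrac1K\sum_{k=0}^{K-1}\sum_{j=0}^{k-1}T^j f$, which places $f-A_K f$ in $(I-T)\mc F$ for every $K$. Given any weak cluster point $f^\ast$ of the bounded sequence $\{A_K f\}$, passing to the weak limit along a subsequence shows $f-f^\ast \in \overline{(I-T)\mc F}$ (a closed subspace, hence weakly closed), while $TA_K f - A_K f = \tfrac1K(T^K f - f)\to 0$ forces $Tf^\ast = f^\ast$, i.e.\ $f^\ast\in E_\lambda$. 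The decomposition $f = f^\ast + (f-f^\ast)$ then yields $A_K f = f^\ast + A_K(f-f^\ast)\to f^\ast = P_\lambda f$.

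The main obstacle is precisely the existence of that weak cluster point $f^\ast$: a bounded sequence need not have a weakly convergent subsequence in a general Banach space. This is exactly the weak-compactness hypothesis in Yosida's formulation, and it is automatic in the reflexive case --- in particular in the Hilbert space $\mc F = L^2(\mc A,\mu)$ that drives the Koopman applications --- where boundedness of $\{A_K f\}$ already guarantees a weakly convergent subsequence and the argument closes.
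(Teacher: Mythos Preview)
The paper does not actually give a proof here: it simply refers the reader to \citet{Yosida:1995ul} and \citet{Krengel:1985ew}. Your proposal is precisely the classical Yosida--Kakutani argument found in those references --- the splitting $\mc F = \ker(I-T) \oplus \overline{(I-T)\mc F}$, the telescoping estimate on the range, and the weak-limit identification of $P_\lambda f$ --- so in that sense you are reproducing rather than diverging from the intended proof.

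Your final paragraph is more than a caveat; it is a correct diagnosis of an omission in the theorem \emph{statement} as the paper records it. Yosida's mean ergodic theorem requires, for each $f$, that the orbit of the Ces\`aro means $\{A_K f\}$ have a weakly sequentially compact closure (equivalently, a weak cluster point), and without that hypothesis the decomposition $E_\lambda + \overline{(I-T)\mc F}$ need not exhaust a general Banach space. The paper's preamble (``a special case of a result found in \citet{Yosida:1995ul}'') hints at this, and the applications in the paper live in $L^2(\mc A,\mu)$ where reflexivity supplies the missing compactness automatically. So your argument is complete and correct in the reflexive setting the paper actually uses, and you have accurately flagged the extra hypothesis needed for the Banach-space generality claimed in the statement.
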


\begin{proof}
See \citet{Yosida:1995ul} or \citet{Krengel:1985ew}.
\end{proof}

Consider the case when the eigenvalues are simple and $\abs{\lambda_1} = \cdots = \abs{\lambda_\ell} = 1$ and $\abs{\lambda_n} < 1$ for $n>\ell$.  Then, $\lambda_{j}= e^{i 2\pi \omega_j}$ for some real $\omega_j$, when $j \leq \ell$.  For vector-valued observables, the projections defined by \eqref{eq:banach-avg-projection} take the form
	\begin{equation}\label{eq:GLAd-fourier}
	\phi_{j}C_{j}(F) = \lim_{K\to \infty} \frac{1}{K}  \sum_{k=0}^{K-1} e^{-i2\pi \omega_{j} k} [U^{k}F],
	\end{equation}
for $j = 1, \dots, \ell$.  Hence, Theorem \ref{thm:krengel-mean-ergodic-thm} reduces to Fourier analysis, as one might expect, for those eigenvalues on the unit circle and the projections can be computed with any implementation of a fast Fourier transform.  For further discussion, consult \citet{Mezic:2004is} or \citet{Mezic:2005ji}.

When an observable is a linear combination of a finite collection of eigenfunctions corresponding to simple eigenvalues, we get an extension of the above theorem to eigenvalues not having unit modulus.

\begin{thm}[Generalized Laplace Analysis]\label{thm:glad-thm}
Let $\{ \lambda_1, \dots, \lambda_m \}$ be a (finite) set of simple eigenvalues for $U$ ordered so that $\abs{\lambda_1} \geq \cdots \geq \abs{\lambda_m}$ and let $\phi_i$ be an eigenfunction corresponding to $\lambda_i$.  For each $n \in \{1,\dots, N\}$, assume $f_{n}: M \to \C$ and $f_n \in \linspan\{\phi_1,\dots, \phi_m \}$.  Define the vector-valued observable $F = (f_1,\dots, f_N)^{\msf T}$.

Then the Koopman modes for $F$ can be computed via
	\begin{equation}\label{eq:GLAd}
	\begin{aligned}
	&\phi_j C_j(F) \\
	&\quad= \lim_{K\to \infty} \frac{1}{K} \sum_{k=0}^{K-1} \lambda_j^{-k}\left[ U^k F - \sum_{i=1}^{j-1}\lambda_i^k \phi_i C_i(F) \right].
	\end{aligned}
	\end{equation}
\end{thm}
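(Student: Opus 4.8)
The plan is to exploit the fact that $F$ lies entirely in a finite-dimensional span of eigenfunctions, which collapses the whole statement to an elementary computation with scalar geometric sums. First I would write out the Koopman-mode expansion guaranteed by the hypothesis $f_n \in \linspan\{\phi_1,\dots,\phi_m\}$: by Definition \ref{def:koopman-mode} we have $F = \sum_{i=1}^m \phi_i C_i(F)$, and applying the eigenrelation \eqref{eq:eigenvector-discrete-time} termwise gives
\begin{equation*}
U^k F = \sum_{i=1}^m \lambda_i^k \phi_i C_i(F).
\end{equation*}

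Next I would substitute this into the bracket on the right-hand side of \eqref{eq:GLAd}. The subtracted sum removes exactly the first $j-1$ summands, so that
\begin{equation*}
U^k F - \sum_{i=1}^{j-1}\lambda_i^k \phi_i C_i(F) = \sum_{i=j}^m \lambda_i^k \phi_i C_i(F).
\end{equation*}
Multiplying by $\lambda_j^{-k}$ and forming the average, the factors $\phi_i C_i(F)$ pull out and one is left with scalar sums:
\begin{equation*}
\frac{1}{K}\sum_{k=0}^{K-1}\lambda_j^{-k}\left[U^k F - \sum_{i=1}^{j-1}\lambda_i^k \phi_i C_i(F)\right] = \sum_{i=j}^m \phi_i C_i(F)\,\frac{1}{K}\sum_{k=0}^{K-1}\left(\frac{\lambda_i}{\lambda_j}\right)^{\!k}.
\end{equation*}

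The final step is to evaluate $\lim_{K\to\infty}\frac{1}{K}\sum_{k=0}^{K-1} r_i^k$ with $r_i = \lambda_i/\lambda_j$. For $i=j$ the ratio is $r_j = 1$ and the average is identically $1$. For $i > j$ the ordering $\abs{\lambda_i}\leq\abs{\lambda_j}$ gives $\abs{r_i}\leq 1$, while distinctness of the eigenvalues (they form a \emph{set} of simple eigenvalues) forces $r_i \neq 1$; the identity $\frac{1}{K}\sum_{k=0}^{K-1} r_i^k = \frac{1}{K}\cdot\frac{1 - r_i^K}{1-r_i}$ then shows this average tends to $0$. Hence only the $i=j$ term survives and the limit equals $\phi_j C_j(F)$, as claimed.

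I expect the only delicate point to be the indices $i>j$ with $\abs{\lambda_i}=\abs{\lambda_j}$: there the ratio $r_i$ sits on the unit circle, so the average does not decay geometrically and one genuinely needs the bounded-partial-sum estimate above together with $\lambda_i\neq\lambda_j$ — which is exactly where simplicity of the spectrum enters. It is also worth flagging why the correction sum over $i<j$ appears at all: for those indices $\abs{\lambda_i}\geq\abs{\lambda_j}$, so $\abs{r_i}\geq 1$ and their averages would fail to vanish (indeed diverge when the inequality is strict). Subtracting the already-computed higher-modulus contributions is precisely what turns \eqref{eq:GLAd} into a recursion that peels off the modes in order of decreasing modulus.
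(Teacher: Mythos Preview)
Your proof is correct and follows the same underlying idea as the paper's: reduce to the finite-dimensional span $\linspan\{\phi_1,\dots,\phi_m\}$, subtract off the already-computed modes $i<j$, and observe that the remaining ratios $\lambda_i/\lambda_j$ for $i\geq j$ all have modulus at most $1$. The difference is one of packaging. The paper phrases the last step operator-theoretically: it views $\lambda_j^{-1}U$ restricted to $\linspan\{\phi_j,\dots,\phi_m\}$ as a contraction and invokes Theorem~\ref{thm:krengel-mean-ergodic-thm} to conclude that the Ces\`aro averages converge to the projection onto the $\lambda_j^{-1}U$-invariant subspace, which is $\linspan\{\phi_j\}$. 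You instead unpack this by writing out the geometric sums $\frac{1}{K}\sum_{k=0}^{K-1}(\lambda_i/\lambda_j)^k$ and handling them by hand, including the unit-modulus case $\abs{\lambda_i}=\abs{\lambda_j}$ with $\lambda_i\neq\lambda_j$. Your route is more elementary and self-contained (no appeal to an abstract ergodic theorem), and it makes the role of the distinctness hypothesis more transparent; the paper's route is terser and emphasizes that the result is really just Theorem~\ref{thm:krengel-mean-ergodic-thm} applied recursively after peeling off the larger-modulus modes.
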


\begin{proof}
Since  each $\phi_{i}$ is an eigenfunction, \(\linspan\{\phi_{1},\dots,\phi_{m}\}\) is a $U$-invariant subspace, so the restriction of $U$ to this subspace is a finite-dimensional linear operator and can be represented with a matrix.  Any $f_n \in \linspan\{\phi_1,\dots, \phi_m \}$ can be written as $f_n = \sum_{j=1}^{m} c_j(f_n) \phi_j$.  Then $f_n - \sum_{i=1}^{j-1} c_i(f_n) \phi_i  \in \linspan\{ \phi_j,\dots, \phi_m\}$.  $U$ restricted to $\linspan\{ \phi_j,\dots, \phi_m\}$ has eigenvalues $\{ \lambda_j,\dots, \lambda_m \}$.  Then $\lambda_j^{-1} U$ restricted to $\linspan\{ \phi_j,\dots, \phi_m\}$ has eigenvalues $\{ 1, \frac{\lambda_{j+1}}{\lambda_j}, \dots, \frac{\lambda_{m}}{\lambda_j}\}$.  The modulus of any element of this set is $\leq 1$.  Taking the average, as in \eqref{eq:banach-avg-projection}, using the operator $\lambda_j^{-1} U$ restricted to $\linspan\{ \phi_j,\dots, \phi_m\}$ gives the projection onto span of $\lambda_j^{-1} U$-invariant functions.  The $\lambda_j^{-1} U$-invariant functions are just elements of $\linspan\{\phi_j\}$.  Then
	\begin{equation*}
	\lim_{k\to\infty} \frac{1}{K} \sum_{k=0}^{K-1} [\lambda_{j}^{-1} U]^k \left(f_n - \sum_{i=1}^{j-1} c_i(f_n) \phi_i\right)
	\end{equation*}
is just $c_{j}(f_n)\phi_j$.  This is equivalent to $\eqref{eq:GLAd}$ when $F = f_n$.  The extension a vector-valued $F$ is obvious.
\end{proof}

\begin{rem}
Theorem \ref{thm:glad-thm} is a simple consequence of theorem \ref{thm:krengel-mean-ergodic-thm}.  The case of $F$ having elements in a generalized eigenspace is more difficult and is treated in forthcoming work by the authors.
\qed\end{rem}

\begin{rem}
Analogous expressions hold for continuous time, with \eqref{eq:GLAd-fourier} and \eqref{eq:GLAd} replaced by
	\begin{equation}\label{eq:GLAc-fourier}
	\phi_j C_j(F) = \lim_{\mc T\to \infty} \frac{1}{\mc T} \int_{0}^{\mc T} e^{-i 2\pi \omega_j t}[ U^t F] dt.
	\end{equation}
and
	\begin{equation}\label{eq:GLAc}
	\begin{aligned}
	&\phi_j C_j(F) \\
	&= \lim_{\mc T\to \infty} \frac{1}{\mc T} \int_{0}^{\mc T} e^{-\lambda_j t}\left[ U^t F - \sum_{i=1}^{j-1}e^{\lambda_i t} \phi_i C_i(F) \right] dt.
	\end{aligned}
	\end{equation}
respectively.
\qed\end{rem}

The first thing to note is that for theorem \ref{thm:glad-thm} and its continuous-time analogue a set of eigenvalues is needed; they are not computed as part of the theorem.  To get the projections $\phi_{j}\, C_{j}(F)$, the more unstable modes must be subtracted off of the dynamics before the time-average is computed.

The most common case for which Theorems \ref{thm:krengel-mean-ergodic-thm} and \ref{thm:glad-thm} (and their continuous-time analogues) are applied occurs when we restrict our attention to a compact invariant subset $\mc C$ of the basin of attraction for some attractor $\mc A \subset M$ and take for $\mc F$ the product $L^2(\mc A, \mu) \times \mc H(\mc C)$, where $\mu$ is the unique invariant measure supported on the attractor and $\mc H(\mc C)$ is the space of analytic functions on $\mc C$.  In this case, an eigenvalue for $U$ satisfies $\abs{\lambda} \leq 1$.

\begin{example}[Harmonic oscillator]
Consider the Harmonic oscillator
	\begin{align*}\label{eq:harmonic-oscillator}
	\dot{p}_{1}(t) &= p_2(t) \\
	\dot{p}_{2}(t) &= -\omega^2 p_1(t)
	\end{align*}
Letting $\mbf p(t) = (p_1(t), p_2(t))^{\msf T}$, the solution flow is
	\begin{align*}
	\mbf p(t) = \Phi^t(\mbf p(0)) = \begin{bmatrix}
	\cos \omega t & \frac{1}{\omega}\sin \omega t \\
	-\omega\sin \omega t & \cos \omega t
	\end{bmatrix}\begin{bmatrix} p_1(0) \\p_2(0) \end{bmatrix}
	\end{align*}
Note that this system is divergence-free and therefore preserves volume in the state space; all eigenvalues of $U$ have modulus 1.

Let $F(\mbf p(t)) = \mbf p(t)$.  Equation \eqref{eq:GLAc-fourier} is nonzero only for $\omega_{1} = \omega$ and $\omega_{2} = - \omega$.  Then for $\lambda_1 = e^{i\omega}$ and $\lambda_2  = e^{-i\omega}$, we get
	\begin{align*}
	\phi_{1}(\mbf p(0))C_{1}(F) &= \frac{1}{2} \left[\begin{pmatrix}
	p_1(0) \\ p_2(0) \end{pmatrix} - i \begin{pmatrix}  {p}_2(0) / \omega \\ -\omega p_1(0) \end{pmatrix} \right] \\
	&= \frac{1}{2}\left(p_1(0) - i \frac{p_2(0)}{\omega}\right)\begin{bmatrix} 1 \\ i\omega \end{bmatrix}
	\end{align*}
and
	\begin{align*}
	\phi_{2}(\mbf p(0))C_{2}(F) &= \frac{1}{2} \left[\begin{pmatrix}
	p_1(0) \\ p_2(0) \end{pmatrix} + i \begin{pmatrix}  {p}_2(0) / \omega \\ -\omega p_1(0) \end{pmatrix} \right] \\
	&= \frac{1}{2}\left(p_1(0) + i \frac{p_2(0)}{\omega}\right)\begin{bmatrix} 1 \\ -i\omega \end{bmatrix},
	\end{align*}
respectively.
We can write
	\begin{align*}
	[U^t F](\mbf p(0)) &= F(\mbf p(t)) = \Phi^t(\mbf p(0)) \\
	&= e^{i \omega t} \phi_1(\mbf p(0))C_{1}(F) \\
	&\quad + e^{-i \omega t}\phi_{2}(\mbf p(0))C_2(F),
	\end{align*}
or more explicitly,
	\begin{align*}
	&\begin{bmatrix} p_1(t) \\ p_2(t) \end{bmatrix} = e^{i \omega t} \frac{1}{2}\left(p_1(0) - i \frac{p_2(0)}{\omega}\right)\begin{bmatrix} 1 \\ i\omega \end{bmatrix}\\
	&\qquad+ e^{-i \omega t} \frac{1}{2}\left(p_1(0) + i \frac{p_2(0)}{\omega}\right)\begin{bmatrix} 1 \\ -i\omega \end{bmatrix}.
	\end{align*}
We recognize the familiar normal mode expansion for the harmonic oscillator.  The normal modes are the Koopman modes $C_1(F) = [1, \, i\omega]^{\msf T}$ and $C_2(F) = [1,\, -i\omega]^{\msf T}$, whereas the Koopman eigenfunctions $\phi_{1,2}(\mbf p(0)) = \frac{1}{2}(p_1(0) \mp p_2(0)/\omega)$ are the terms of the normal mode expansion that are functions of only the initial conditions.
\qed\end{example}


While in principle the projections onto the stable and unstable modes can be computed directly using Theorem \ref{thm:glad-thm}, difficulties arise when we move past simple cases.  If an explicit representation of $U$ is not known for the observable $F$, we would need to compute the projections numerically.  For $\abs{\lambda} \neq 1$, this would require a numerical implementation of a Laplace transform; these are generally unstable computations, precluding the direct numerical implementation of Theorem \ref{thm:glad-thm} when $F$ has stable or unstable Koopman modes.  Therefore, Theorem \ref{thm:glad-thm} is more suited as an analytical tool, rather than a numerical one.  

\subsubsection{A numerical algorithm: the Dynamic Mode Decomposition (DMD)}
\label{subsubsec:DMD}
Usually, we do not have access to an explicit representation of the Koopman operator.  The behavior of the operator can only be ascertained by its action on an observable and usually at only a finite number of initial conditions.  Thus, we are led to consider data-driven algorithms for computing the Koopman modes.  By data, we mean a sequence of observations of a vector-valued observable along a trajectory $\{ T^k p \}$.  The following algorithms use these sequences of observations to approximate both the eigenvalues and the Koopman modes of $U$ without having to numerically implement a Laplace transform.  This is accomplished by finding the best approximation of $U$ on some finite-dimensional subspace and computing eigenfunctions of the resulting finite-dimensional linear operator.  The notion of \emph{best approximation} will be made clear in what follows.

Fix a vector-valued observable $F : M \to \C^{m}$ and consider the cyclic subspace
	\begin{equation}
	\mc K_{\infty} = \linspan\{ U^{k}F \}_{k=0}^{\infty};
	\end{equation}
that is, $\mc K_{\infty}$ is the space of vector-valued observables in which finite linear combinations of elements from $\{ U^{k}F \}_{k=0}^{\infty}$ are dense.

Fix an $r < \infty$ and consider the Krylov subspace 
	\begin{equation}
	\mc K_r = \linspan \{ U^{k}F \}_{k=0}^{r-1}.
	\end{equation}
We will assume that $\{ U^{k}F \}_{k=0}^{r-1}$ is a linearly independent set so that these functions form a basis for $\mc K_r$.  Note that $U\mc K_r \subset \mc K_{r+1}$, so that, in general, $\mc K_r$ is not $U$-invariant; it is only invariant if $U^{r} F \in \linspan\{ U^{k}F \}_{k=0}^{r-1}$.  

Let $P_r : \mc F^{m} \to \mc K_{r}$ be a projection from the space of vector-valued observables onto $\mc K_r$.  Then 
	\begin{equation}\label{eq:Krylov-projection}
	P_r U\vert_{\mc K_{r}} : \mc K_{r} \to \mc K_{r}
	\end{equation}
is a finite-dimensional linear operator.  This operator has a matrix representation, $\msf A_{r}:\C^r \to \C^r$, in the $\{ U^{k}F \}_{k=0}^{r-1}$-basis.  Note that this matrix is dependent upon (1) the vector-valued observable, (2) the number of time-steps $r$ used (the dimension of the Krylov subspace), and (3) the projection $P_r$ used which is specified by the type of approximation we choose to make; in the following algorithms, this projection is the least-square approximation for evolution from a single point $p \in M$.

If $(\lambda, \mbf v)$ is an eigenpair for $\msf A_r$, where $\mbf v = (v_0,\dots, v_{r-1})^{\msf T} \in \C^{r}$, then $\phi = \sum_{j=0}^{r-1} v_j [U^{j}F]$ is an eigenfunction of $P_r U\vert_{\mc K_{r}}$.  By restricting our attention to a fixed observable $F$ and a Krylov subspace, we have reduced the problem of finding eigenvalues and Koopman modes of the Koopman operator to finding eigenvalues and eigenvectors for a matrix $\msf A_r$.

A standard method for computing eigenvalues of a matrix is the Arnoldi algorithm and its variants.  These are iterative methods relying on Krylov subspaces.  The basic idea behind these algorithms is to project the matrix onto a lower-dimensional subspace and to compute the eigenvalues of the lower-rank matrix.  If the projection has a nice representation, then the eigenvalue problem for this lower-rank approximation can be efficiently solved.

The standard Arnoldi algorithm\cite{Arnoldi:1951ub,Meyer:2000tv,Saad:1986hx} assumes we have a matrix $\msf A : \C^{m} \to \C^{m}$ whose eigenvalues and eigenvectors we want to compute.  Starting from a random vector $\mbf b \in \C^m$ of unit norm, we form the Krylov subspace
	\begin{equation*}
	\mc K_r := \linspan \{ \mbf b, \msf A\mbf b, \dots, \msf A^{r-1}\mbf b\}.
	\end{equation*}
Assuming full rank, an orthonormal basis $\{ \mbf q_j \}_{1}^{r}$ for $\mc K_r$ can be found using a Gram-Schmidt procedure applied to $\{ \msf A^j \mbf b\}_{j=0}^{r-1}$.  Orthogonalization and renormalization is usually performed at each step $j$.  Letting $\msf Q_r$ be the matrix formed from the orthonormal basis, we get the relation
	\begin{equation*}
	\msf H_r = \msf Q_r^* \msf A \msf Q_r
	\end{equation*}
where $\msf H_r$ is of upper Hessenberg form and has the interpretation of the orthogonal projection of $\msf A$ onto $\mc K_r$.  $\msf H_r$ can be diagonalized efficiently.  The eigenvalues of $\msf H_r$ approximate the $r$ eigenvalues of $\msf A$ of largest magnitude.  Implementations of this algorithm use various additional methods to ensure numerical stability.

By applying the Arnoldi algorithm, we implicitly assume there exists a matrix $\msf A$ whose evolution $\msf A^k \mbf b \in \C^m$ matches the evolution $[U^k F](p) \in \C^m$ for $k=0,\dots, r$.  Unfortunately, since we do not have an explicit representation of the Koopman operator, we cannot use the standard Arnoldi algorithm.  This is due to the orthogonalization and renormalization performed at each step, which is equivalent to changing the observable $F$ at each step.  If $\mbf q_k$ is the vector formed from normalizing the component of $[U^k F]$ orthogonal to $\linspan\{ [U^j F] \}_{0}^{k-1}$, then there is some $G : M \to \C^m$ such that $\mbf q_k = G(p)$.  Hence we are never looking at the action of the Koopman operator along a single trajectory and observable, precluding the use of the Arnoldi algorithm with data obtained from simulation.

A variant of the Arnoldi algorithm, utilizing companion matrices, was first described in \citet{Ruhe:1984tr}.  The algorithm was popularized in the Fluids community by \citet{Rowley:2009ez} and \citet{Schmid:2010ba}.  In  \citet{Schmid:2010ba}, the algorithm was dubbed the Dynamic Mode Decomposition (DMD), whereas \citet{Rowley:2009ez} related the algorithm to the approximation of Koopman modes.  

The strength of the DMD algorithm is that it only requires a sequence of vectors $\{ \mbf b_k \}_{k=0}^{r}$, where 
	\begin{equation}
	\mbf b_k := U^k F(p) \in \C^m
	\end{equation}
for some fixed $F : M \to \C^m$ and fixed $p \in M$.  The algorithm gives the best approximation, at the point $p \in M$, of the projection $\phi C(F)$ onto the eigenfunction $\phi$ (eq. \eqref{eq:KM}).  As will be seen, this corresponds to a specific choice of the projection operator $P_r$ appearing in \eqref{eq:Krylov-projection}.

To derive the DMD algorithm, let
	\begin{equation*}
	\msf K_r := [ \mbf b_0, \dots, \mbf b_{r-1}]. 
	\end{equation*}
The columns of $\msf K_r$ are the $\C^m$-vectors resulting from the point evaluations of the $\{U^{k}F\}$-basis for the Krylov subspace $\mc K_r$ at the point $p\in M$.

In general, $\mbf b_r$ will not be in the span of the columns of $\msf K_r$.  In this case, $\mbf b_r = \sum_{j=0}^{r-1} c_j \mbf b_j + \mbf \eta_r$, where the $c_j$'s are chosen to minimize the $\C^m$-norm of the residual $\mbf \eta_r$.  This corresponds to choosing the projection operator $P_r$ appearing in \eqref{eq:Krylov-projection} so that $P_r U^r F$ is the least-squares approximation to $U^r F$ \emph{at the point $p\in M$} as measured by the $\C^m$-norm; i.e.,
	\begin{align*}
	\norm{ [U^{r}F](p) - P_r [U^{r}F](p) }_{\C^m} &= \norm{ \mbf b_r - \sum_{j=0}^{r-1} c_j \mbf b_{j} }_{\C^m} \\
	&\leq \norm{ \mbf b_r - \sum_{j=0}^{r-1} d_j \mbf b_{j} }_{\C^m}
	\end{align*}
for any other $\{d_0,\dots, d_{r-1}\}$.

Since $\mbf b_r = \msf K_r \mbf c + \mbf \eta_r$, where $\mbf c = (c_0,\dots, c_{r-1})^{\msf T}$, we get
\begin{DIFnomarkup}
  \begin{equation*}
	U \msf K_r = [ \mbf b_1, \dots, \mbf b_{r}] = [ \mbf b_1, \dots, \mbf b_{r-1}, \msf K_r \mbf c + \mbf \eta_r],
  \end{equation*}
\end{DIFnomarkup}
or equivalently
	\begin{equation}\label{eq:krylov-shift}
	U \msf K_r = \msf K_r \msf A_r + \mbf \eta_r \mbf e^{\msf T}
	\end{equation}
where $\mbf e = (0,\dots, 0, 1)^{\msf T} \in \C^m$ and
	\begin{equation}
	\msf A_r = \begin{bmatrix}
	0 & 0 & \cdots & 0 & c_0 \\
	1 & 0 & \cdots & 0 & c_1 \\
	0 & 1 & \cdots & 0 & c_2 \\
	\vdots & \vdots & \ddots & \vdots & \vdots \\
	0 & 0 & \cdots & 1 & c_{r-1}
	\end{bmatrix}
	\end{equation}
is the $r\times r$ companion matrix; it is the matrix representation of $P_r U$ in the $\{U^k F\}_{k=0}^{r-1}$-basis.

Diagonalize the companion matrix:
	\begin{equation}\label{eq:diagonalized-companion}
	\msf A_r = \msf V^{-1} \msf \Lambda \msf V,
	\end{equation}
where $\msf \Lambda$ is the diagonal matrix of eigenvalues $\lambda_i$ and the columns of $\msf V^{-1}$ are eigenvectors of $\msf A_r$.  Inserting the expression for $\msf A_r$ into \eqref{eq:krylov-shift} and multiplying on the right by $\msf V^{-1}$ gives
	\begin{equation}\label{eq:matrix-krylov}
	U \msf K_r \msf V^{-1}= \msf K_r  \msf V^{-1} \msf \Lambda + \mbf \eta_r \mbf e^{\msf T}\msf V^{-1}.
	\end{equation}
Define
	\begin{equation}\label{eq:empirical-ritz-vectors}
	\msf E := \msf K_r \msf V^{-1}.
	\end{equation}
Then \eqref{eq:matrix-krylov} becomes
	\begin{equation}
	U \msf E = \msf E \msf \Lambda + \mbf \eta_r \mbf e^{\msf T} \msf V^{-1}.
	\end{equation}
For large enough $m$, it is hoped that $\norm{ \mbf \eta_r \mbf e^{\msf T} \msf V^{-1} }$ is small.  If that is the case, then $U \msf E \approx \msf E \msf \Lambda$ and the columns of $\msf E$ approximate some eigenvectors of $U$ and the diagonal elements of $\msf \Lambda$ approximate some eigenvalues of $U$.  Note that $\norm{ \mbf \eta_r \mbf e^{\msf T} \msf V^{-1}} = \mbf 0$ whenever $r > m$, since then the columns of $\msf K_r$ are linearly dependent which implies that $\mbf \eta_r =\mbf 0$.

\begin{defn}\label{def:empirical-ritz}
Let $\msf \Lambda$ and $\msf E$ be defined as in \eqref{eq:diagonalized-companion} and \eqref{eq:empirical-ritz-vectors}, respectively.  Let $\mbf w_i$ be the $i^{th}$ column of $\msf E$ and $\lambda_i$ be the $i^{th}$ diagonal element of $\msf \Lambda$.  Then $\mbf w_i$ is called an empirical Ritz vector and $\lambda_i$ is called an empirical Ritz value.
\end{defn}

Each empirical Ritz vector approximates $\phi_i(p) C_{i}(F)$, the projection of $F$ onto some eigenvector $\phi_i$, and the empirical Ritz values approximate the corresponding eigenvalues of $U$.  For this reason, we will generally refer the empirical Ritz vectors and values as the Koopman modes and eigenvalues computed by the DMD algorithm although this is not strictly true and loosens the terminology.

\begin{rem} 
The above algorithm is very much tied to the initial condition chosen.  This dependence arises since the empirical Ritz values and vectors are formed using a Krylov subspace that is generated by a sequence of vector-valued observations along a finite trajectory having initial condition $p \in M$.  A given initial condition may not reveal the full spectrum and different initial conditions can reveal different parts of the spectrum.  For example consider the dynamical system,
	\begin{equation}
	p_{k+1} = \begin{cases}
	\lambda_{1} p_{k}, & p_{k} \leq 0 \\
	\lambda_{2} p_{k}, & p_{k} > 0
	\end{cases},
	\end{equation}
where $0 < \lambda_1 < 1 < \lambda_2$.  Let $\phi_{1}(p) = \min\{p,0\}$ and $\phi_{2}(p) = \max\{0,p\}$.  These are eigenfunctions of the Koopman operator at eigenvalues $\lambda_1$ and $\lambda_2$, respectively.  Let $F(p) = p \equiv \phi_{1}(p) + \phi_{2}(p)$.  Analytically, we can decompose the observable into a sum of projections onto eigenspaces: $F(p) = P_1 F(p) + P_2 F(p)$, where $P_{i}$ is the projection onto $\linspan \phi_i$.    Choosing an initial condition $p < 0$ and applying the DMD algorithm only computes the projection onto the stable mode; the DMD algorithm only reveals $P_1 F(p)$.  Similarly, choosing $p > 0$ only reveals the unstable mode, $P_2 F(p)$.  Therefore, the DMD algorithm may only reveal a subset of the spectrum of the Koopman operator and the corresponding Koopman modes.

It should also be remarked that if $F \notin \linspan\{ \phi_{i} \}$ for some eigenfunction $\phi_i$, then the DMD algorithm will not reveal that mode.  This is often the case for natural choices for a set of observables, as was the case in the linear system example above (see example \ref{ex:linear-sys}, p.\ \pageref{ex:linear-sys}).
\qed\end{rem}

The version of the DMD algorithm described tends to be numerically ill-conditioned.  The is due to $\msf A^k \mbf b_0$ converging to the eigenspaces corresponding to the largest magnitude eigenvalues, resulting in the columns of $\msf K_r$ becoming nearly linearly dependent.  A robust version of the algorithm has been described in \citet{Schmid:2010ba}.  It amounts to first computing a singular value decomposition (SVD) of $\msf K_r$ and projecting $\msf A$ onto the Krylov subspace using the SVD basis (for details, consult \citet{Schmid:2010ba}).  \citet{Chen:2012jh} discusses variants of the Dynamic Mode Decomposition, relates it to discrete Fourier transforms, and introduces an ``optimized'' DMD algorithm that computes an arbitrary number of modes from data.



\subsection{Applications of Koopman Modes}
\label{subsec:KM-apps}

The theory of Koopman modes has led to a number applications in the literature. Broadly, the uses of Koopman modes can be classified under two headings: model reduction and coherency.  Model reduction deals with extracting the spatial features of just a few Koopman modes and attempting to understand the physics of the system just based on those, neglecting the details contained in other Koopman modes.
The notion of coherency, on the other hand, deals with how observables relate dynamically with respect to a Koopman mode.  Coherency is always defined for a (not necessarily proper) subset of observables and an eigenvalue $\lambda$.  The subset of observables is coherent for $\lambda$ if the dynamics are identical.  This reduces to checking if the initial magnitudes and phases of $C_{\lambda}(F)$ are the same for each observable in the subset.  The following definition, first appearing in \citet{Susuki:2010ei}, makes this precise.

\begin{defn}[Coherency between Koopman Modes]\label{def:km-coherency}
Consider a vector-valued observable $F : M \to \C^m$, where $F = (f_1, \dots, f_m)^{\msf T}$ and $f_j : M \to \C$ for $j=1,\dots, m$.  Let $\{ C_1(F), \dots, C_{\ell}(F) \}$, $\ell < \infty$, be a collection of Koopman modes of interest.  Note that $C_i(F) = (c_{i,1}, \dots, c_{i,m})^{\msf T} \in \C^m$.  Fix $\eps_1, \eps_2 > 0$ and consider $j_1, j_2 \in \{1,\dots, m\}$.  Then, $f_{j_1}$ and $f_{j_2}$ are \textbf{$\mbf{(\eps_1,\eps_2)}$-coherent} (with respect to the chosen Koopman modes) if
\begin{enumerate}[(i)]
\item $ \abs{ \abs{c_{i,j_{1}} } - \abs{c_{i,j_{2}}} } < \eps_1$, and
\item $\abs{ \angle{c_{i,j_{1}} } - \angle{c_{i,j_{2}}} } < \eps_2$.
\end{enumerate}
for all $i=1, \dots, \ell$.
\qed\end{defn}

The choice of two epsilon values in the above definition allows the practitioner to set the tolerances of the magnitude and phase independently.  This is useful when one can tolerate more variation in either the modulus or the phase and still call the modes coherent.  Thus $c_{i,j_{2}}$ is coherent with $c_{i,j_{1}}$, if the complex number $c_{i,j_{2}}$ is contained inside some rectangle centered at $c_{i,j_{2}}$.

The examples we present are necessarily a subset of those that exist in the literature.

\subsubsection{Power systems\cite{Susuki:2011ef,Susuki:2010ei}}
Koopman mode analysis has seen application in the analysis of power systems.  In \citet{Susuki:2010ei,Susuki:2011ef}, the authors used Koopman mode analysis to identify coherency in the short-term swing dynamics of multi-machine power systems, with the New England Test System and IEEE Reliability Test System-1996 being used as test cases for the methodology\cite{Susuki:2010ei,Susuki:2011ef}.  In \citet{Susuki:2012gk}, the authors used Koopman mode analysis to identify precursors to the so-called coherent swing instability of power systems where a group of generators synchronously loses coherency with the rest of the system after a local disturbance.  We will focus only upon the identification of coherency, as pursued in \citet{Susuki:2010ei,Susuki:2011ef}, since it underlies the work in \citet{Susuki:2012gk} as well.

The New England system is a 39-bus system having 10 synchronous generators, while the IEEE systems has 73 buses and 99 synchronous generators\cite{Susuki:2010ei,Susuki:2011ef}.  As the analysis of the two systems is the same, we focus on the simpler New England system.

The swing dynamics of the New England system were given by the following systems of differential equations\cite{Susuki:2010ei,Susuki:2011ef}:
	\begin{equation}
	\begin{aligned}
	\frac{d\delta_i}{dt} &= \omega_i \\
	\frac{H_i}{\pi f_b}\frac{d\omega_i}{dt} &= -D_i \omega_i + P_{m i} - G_{ii} E_i^2 \\
	&\quad - \sum_{j=1 \atop j \neq i}^{10} E_i E_j C_{ij},
	\end{aligned}
	\end{equation}
where $C_{ij}$ is the coupling term
	\begin{equation*}
	C_{ij} = G_{ij} \cos(\delta_i - \delta_j) + B_{ij}\sin(\delta_i - \delta_j) .
	\end{equation*}
In this model, $i = 2, \dots 10$ indexed the generators, $\delta_i$ was the angular position of the rotor of generator $i$ relative to bus 1, and $\omega_i$ was the rotor speed of generator $i$ relative to that of bus 1; $D_i$ was the damping coefficient of generator $i$, $E_i$ was the voltage of the generator, and $P_{mi}$ the mechanical input power; $G_{ii}$ was the internal conductance of generator $i$, while $G_{ij} + \sqrt{-1} B_{ij}$ was the transfer impedance between generators $i$ and $j$; $H_i$ was a per unit time inertia constant and $f_b$ a frequency\cite{Susuki:2010ei,Susuki:2011ef}.  The variables $H_i$, $E_i$, $D_i$, $f_b$ and power loads were specified during simulations\cite{Susuki:2010ei,Susuki:2011ef}.  The variables $G_{ii}$, $G_{ij}$, and $B_{ij}$ were computed using power flow computations (see \citet{Susuki:2010ei,Susuki:2011ef}, and the references therein, for full numerical details).  The state space for each generator ($i=2,\dots,10$) was the cylinder $\mc C = [-\pi, \pi] \times \R$ and the full state space $M$ for this system was $M = \mc C \times \cdots \times \mc C = \mc C^9$.  Each generator exhibited a stable equilibrium at $(\delta_i^*, \omega_i^* = 0)$, for some $\delta_i^*$, computed using a power flow computation.

Let an observable $f_i : M \to \R$ be given by $f_i( \mbf \delta, \mbf \omega) = \omega_i$, where $\mbf \delta = (\delta_2, \dots, \delta_{10})$ and $\mbf \omega = (\omega_2,\dots, \omega_{10})$.  The vector-valued observable chosen for Koopman mode analysis was $F = (f_2, \dots, f_{10})^{\msf T}$, so that
	\begin{equation*}
	F(\mbf \delta, \mbf \omega) = \begin{bmatrix} \omega_2 \\ \vdots \\ \omega_{10} \end{bmatrix}.
	\end{equation*}
This was a physically-relevant observable since in practice one measures the rotor speeds for each generation plant\cite{Susuki:2010ei,Susuki:2011ef}.

The system was evolved for a short time period with a disturbance from the equilibrium state localized at rotor 8.  Study of the resulting trajectories showed that generators 2, 3, 6, and 7 were a coherent group\cite{Susuki:2010ei,Susuki:2011ef}; the four generators exhibited responses in angular frequencies $\omega_i$ having the same amplitude and phase.

The DMD algorithm was applied to the same simulation data.  The Koopman modes of interest were those that had the largest norms and corresponding growth rates $\abs{\lambda_j}$.  Figure \ref{fig:susuki2011_correction_fig7} shows a plot of the magnitude and phase of each component of the three Koopman modes with the largest growth rates and magnitudes.  These were labeled as modes 7, 8, and 9 and corresponded to frequencies of 1.3078, 1.0962, and 0.3727 Hz, respectively.  For each mode $j=7, 8, 9$, there were amplitudes $A_{ji}$ and phases $\alpha_{ji}$ for the generators $i=2,\dots, 10$.  In the figure, the amplitudes and phases for modes 7, 8, 9 are plotted in plotted with symbols $\ast$, $\times$, and $\circ$, respectively.  Number labels within the plots specify the generator.  It is seen that generators 2, 3, 6, 7, and 9 are coherent with respect to mode 8, while all but generator 9 are coherent with respect to mode 9.  Therefore, generators 2, 3, 6, 7, and 9 were coherent with respect to both modes 8 and 9, as one found with visual inspection of the trajectories.  While not done in this paper, the amplitudes and phases represented in this way allow using a number of clustering algorithms to automatically identify coherency.

\begin{figure}[htb]
\begin{center}
\framebox[\linewidth][c]{ \raisebox{50mm}{\footnotesize Waiting for permissions. See original publication.}}
\caption[DMD Koopman modes for Power Systems \citet{Susuki:2011jq}]{Power Systems - amplitudes and phases for the components of the three Koopman modes having largest growth rates and norms.  Koopman modes and eigenvalues were computed using the DMD algorithm.  The largest modes are labeled as modes 7, 8, and 9.  The amplitudes and phases of the components of mode 9 are plotted with the symbol $\circ$.  The numbers in the plot correspond to component of the mode.  Modes 7 and 8 are plotted similarly, expect with symbols $\ast$ and $\times$, respectively.   Generators 2, 3, 6, 7, and 9 are coherent with respect to mode 8.  All generators, except generator 8, are coherent with respect to mode 9.}
\label{fig:susuki2011_correction_fig7}
\end{center}
\end{figure}

\subsubsection{Jet in Crossflow\cite{Rowley:2009ez}}\label{subsubsec:jet-in-crossflow}
One of the earliest applications of Koopman modes was to the study of nonlinear fluid flows.  \citet{Rowley:2009ez} introduced the concepts to the fluids community and demonstrated the methodology by computing a subset of the Koopman modes, using the DMD algorithm, for a jet in a crossflow.  The jet in a crossflow configuration is a common way of mixing the jet fluid with a uniform crossflow.  The crossflow moves parallel to a flat plate and the jet is injected through an orifice in the plate.  It is known that such a system can exhibit self-sustained oscillations\cite{Rowley:2009ez}, and Koopman modes were used to automatically identify the relevant frequencies and corresponding three-dimensional flow structures.

The flow field was studied in a $(L_x, L_y, L_z) = (75, 20, 30)\delta_0^*$ computational box, where $\delta_0^*$ was the displacement thickness at the crossflow inlet.  The incompressible Navier-Stokes equations over a flat plate were solved using a Fourier-Chebyshev spectral method with a grid resolution of $256\times 201 \times 144$ (see the reference for all the simulation details).  Therefore, each point in the state space $M$ is a sequence of Fourier-Chebyshev coefficients.  The vector-valued observable, $F : M \to \R^m$, was chosen as the velocity measurements of the flow field at the grid points.  Hence, $m = 3(256\times 201 \times 144) \approx 2.2 \times 10^7$ (three velocity components at each grid point).  The situation is similar to the heat equation example above (example \ref{ex:heat}, p.\ \pageref{ex:heat}) where the state space was a space of Fourier coefficients and the observable was a heat distribution on a square.  

\begin{rem}\label{rem:basis-space-choice}
The choice of state space $M$ and a basis for the observables are not unique.  For a fluid experiment on some physical domain $\mbb B$, we assume that the solutions exists in some function space.  Usually this is the space of finite energy flows so that a velocity profile $\mbf u(\mbf x)$ exists in $L^2(\mbb B, d\mbf x)$.  Depending on the boundary conditions, many different bases for $L^2(\mbb B, d\mbf x)$ exist.  The particular basis chosen depends on the computational method used to solve the Navier-Stokes equations.  Above, this method was a Fourier-Chebyshev spectral method; the resulting basis for $L^2(\mbb B, d\mbf x)$ consisted of trigonometric and Chebyshev polynomials.  The state space $M$ was the sequence space of Fourier-Chebyshev coefficients corresponding to $L^2(\mbb B, d\mbf x)$ functions.  However, if periodic boundary conditions for the fluid flow are assumed, the trigonometric polynomials could be used to represent $L^2(\mbb B, d\mbf x)$ and $M$ would be the space of Fourier coefficients.  Therefore, the choice of solution method for the Navier-Stokes equations determines the basis functions and implicitly defines the state space $M$ that represents the system.
\qed\end{rem}

An initial velocity profile and boundary conditions were specified for simulations and this configuration corresponded to a fixed $p \in M$.  Leting $\mbf b_k := [U^k F](p)$, the DMD algorithm was performed on the sequence of observables $\{ \mbf b_{200}, \mbf b_{202}, \dots, \mbf b_{700} \}$.  The reason for the delay in the start time of the sequence was to neglect the transient terms.

The top row of figure \ref{fig:rowley2009_fig1} shows the time signals of the streamwise velocity recorded by a sensor probe close to the wall, just downstream from the jet orifice and a probe located downstream and on the jet shear layer.  In the bottom row, the spectral content of the time signals are shown in black, whereas the spectral peaks identified by the DMD algorithm are shown in red.  Note that for the probe near the wall (left column of figure \ref{fig:rowley2009_fig1}) the signal contains only low-frequency components, whereas the signal near the jet contains both low- and high-frequencies.  The probes are local measurements and only pick up a subset of the full spectrum for the fluid flow.  The Koopman modes are global objects and, as shown in the figure, the DMD algorithm identifies both the low- and high- frequency components of the flow field.

Figure \ref{fig:rowley2009_fig2} shows the spectrum of the Koopman operator as computed by the DMD algorithm.  Most eigenvalues lie on the unit circle implying that the flow field is near an attractor.  The time-averaged flow (steady-state component) corresponds to $\lambda =1$ and is indicated in blue in the left image of figure \ref{fig:rowley2009_fig2}.  The rest of the eigenvalues have colors smoothly varying from red to white with the colors corresponding to the magnitude of the associated global mode.  Red corresponds to large magnitudes for the Koopman modes, white to low magnitude.  The magnitudes are given by the total energy of the mode (2-norm).  The right image of figure \ref{fig:rowley2009_fig2} shows the magnitudes of the Koopman modes at each frequency.  The color scheme is the same as for the left image.

If we order the Koopman modes in order of decreasing magnitude, mode 1 corresponds to the time-average flow and the rest come in complex-conjugate pairs; modes 2 and 3 correspond to complex-conjugate eigenvalues and have the same magnitude.  Figure \ref{fig:rowley2009_fig3}, shows the streamwise velocity components of mode 2 (left) and mode 6 (right).  Each mode oscillates at a single frequency, with mode 2 corresponding to a high-frequency ($St = 0.141$) and mode 6 to a low-frequency ($St = 0.0175$); $St$ is the Strouhal number.  These correspond to the tallest red line and the left most red line in the bottom row of figure \ref{fig:rowley2009_fig1}, respectively.  In both images of figure \ref{fig:rowley2009_fig3}, the red surfaces correspond to positive streamwise velocities and blue surfaces to negative streamwise velocities.  Mode 2 is associated with shear layer vortices with additional vortices extending toward the wall.  Mode 6 has large structures along the wall associated with shedding of the wall vortices.  This shedding of wall vortices is coupled to the main jet body as indicated by the mode having structure along the jet body.

\begin{figure*}[ht]
\begin{center}
\framebox[\linewidth][c]{ \raisebox{50mm}{\footnotesize Waiting for permissions. See original publication.}}
\caption[Jet in Crossflow: spectra of traces of two observables. \citet{Rowley:2009ez}]{Jet in Crossflow - The top row is the time signal of the streamwise velocity for a probe near the wall, downstream of the jet orifice (left) and for a probe downstream, in the jet trajectory (right).  In the bottom row, the spectral content of the corresponding probe signals are shown in black.  In red, the part of the spectrum of the Koopman operator captured by DMD algorithm.  Only the positive frequencies are shown since eigenvalues occur in complex-conjugate pairs.  (Original in \citet{Rowley:2009ez}, Journal of fluid mechanics by Cambridge University Press. Reproduced with permission of Cambridge University Press in the format reprint in a journal via Copyright Clearance Center.)}
\label{fig:rowley2009_fig1}
\end{center}
\end{figure*}

\begin{figure*}[ht]
\begin{center}
\framebox[\linewidth][c]{ \raisebox{50mm}{\footnotesize Waiting for permissions. See original publication.}}
\caption[Jet in Crossflow: spectra of the Koopman operator.  \citet{Rowley:2009ez}]{Jet in Crossflow - (left) The spectrum of Koopman operator as identified with the DMD algorithm.  Most of the Koopman eigenvalues are on the unit circle.  The mode corresponding to the time-averaged flow (corresponding to $\lambda =1$) is indicated in blue.  The other eigenvalues are colored from red to white based on the total energy of the associated Koopman mode.  Red corresponds to high-energy modes, white to low-energy modes. (right) The magnitudes of the Koopman modes at the each frequency.  The color scheme is the same as for the image on the left.  (Original in \citet{Rowley:2009ez}, Journal of fluid mechanics by Cambridge University Press. Reproduced with permission of Cambridge University Press in the format reprint in a journal via Copyright Clearance Center.)}
\label{fig:rowley2009_fig2}
\end{center}
\end{figure*}

\begin{figure*}[ht]
\begin{center}
\framebox[\linewidth][c]{ \raisebox{50mm}{\footnotesize Waiting for permissions. See original publication.}}
\caption[Jet in Crossflow: level sets of Koopman modes. \citet{Rowley:2009ez}]{Jet in Crossflow - Koopman modes 2 (left) and 6 (right) corresponding to high- ($St_2 = 0.141$) and low- ($St_6 = 0.0175$) frequencies, respectively.  $St$ is the Strouhal number.  Red contours correspond to positive streamwise velocities and blue contours to negative streamwise velocities.  Reproduced from (Original in \citet{Rowley:2009ez}, Journal of fluid mechanics by Cambridge University Press. Reproduced with permission of Cambridge University Press in the format reprint in a journal via Copyright Clearance Center.)}
\label{fig:rowley2009_fig3}
\end{center}
\end{figure*}

\subsubsection{Self-sustained oscillations in a turbulent cavity\cite{Seena:2011ft}}
\citet{Seena:2011ft} investigated the causes of self-sustained pressure oscillations in fluid flow over a cavity (see figure \ref{fig:seena2011_fig13} for a profile of the experiment geometry) by using the Dynamic Mode Decomposition algorithm.  We denote this domain by $\mbb B$.  The goals of the study were to identify the vortical structures that drove the hydrodynamic oscillations and obtain dynamical information about those structures\cite{Seena:2011ft}.  Both thin and thick incoming boundary layers were studied (Reynold's numbers at the cavity of 12000 and 3000, respectively); only the turbulent case ($Re =  12000$) is covered here.

As with the case of the jet in a crossflow example, the fluid evolution was governed by the incompressible Navier-Stokes equations corresponding to the specified cavity geometry (see \citet{Seena:2011ft} for details on the computational domain).  Abstractly, the state space $M$ was a sequence space of coefficients for basis functions on $\mbb B$.  Since the system was not periodic in all dimensions, the basis functions were not the 3-dimensional trigonometric polynomials (see Remark \ref{rem:basis-space-choice}).  A family of observables, parameterized by points in $\mbb B$, was given by functions mapping a point in $M$ (a sequence of coefficients) to the fluid pressure at a point $\mbf x \in \mbb B$. 

The solutions of the Navier-Stokes equations were computed using the Crank-Nicolson method using a second-order central difference scheme in space\cite{Seena:2011ft}.  Cavity flows at the high Reynold's number were simulated with a Large Eddy Simulation (LES)\cite{Seena:2011ft}.  Broadly speaking, LES filters the governing equations by removing the small-scale structures and replacing them with models.  Consult \citet{Seena:2011ft} for full details of the simulation parameters and \citet{Germano:1991te} for a discussion of LES.  

The vector-valued observable chosen for analysis was the fluid pressure at each of the computational grid points in and above the cavity.  The DMD algorithm was applied to a sequence of 124 flow snapshots recorded after allowing transients to decay\cite{Seena:2011ft}.  Figure \ref{fig:seena2011_fig12a} shows the Koopman eigenvalues resulting from the DMD algorithm.  On the left, almost all of the eigenvalues are seen to be on the unit circle, implying that the flow field is on or near an attractor.  Colors in the plot correspond to the total energy of the corresponding Koopman mode.  On the right, the total energies of the Koopman modes at each frequency are shown.  The four dominant peaks are labeled.  Only positive frequencies are marked since eigenvalues occur in complex-conjugate pairs.  The mode labeled as 1 occurs at $\omega = 0$ and corresponds to the steady component of the flow.  The corresponding Koopman mode is shown in figure \ref{fig:seena2011_fig13}(a).  Solid, black lines correspond to high-pressure regions, whereas broken, gray lines correspond to low-pressure regions.  Koopman modes 2, 3, and 4, corresponding to the labeled peaks in figure \ref{fig:seena2011_fig12a}(b), are shown in figure \ref{fig:seena2011_fig13}(b)-(d).  Modes 2 and 3 correspond to $\omega = 4.6\ rad/s$ and $3.5\ rad/s$, respectively.  The exact frequency corresponding to mode 4 was not reported, but was less than $10\ rad/s$.  The low- and high-pressure regions of the modes oscillate at a fixed frequency and suggest self-sustained oscillations in the cavity\cite{Seena:2011ft}.  These results were consistent with results reported in literature\cite{Seena:2011ft}.  The study successively used the DMD algorithm to identify the large vortices in the cavity driving the self-sustained oscillations.

\begin{figure*}[ht]
\begin{center}
\framebox[\linewidth][c]{ \raisebox{50mm}{\footnotesize Waiting for permissions. See original publication.}}
\caption[Turbulent cavity flow: spectrum of Koopman operator. \citet{Seena:2011ft}]{Turbulent cavity flow - (left) Koopman eigenvalues computed using the DMD algorithm for turbulent flow inside the cavity at $Re = 12000$.  The eigenvalues are colored based on the energy of the corresponding Koopman mode. (right) The energy of the Koopman mode at each frequency, $\omega$ $(rad/s)$. (Original in \citet{Seena:2011ft}, International journal of heat and fluid flow by Institution of Mechanical Engineers (Great Britain) Reproduced with permission of Institution of Mechanical Engineers] in the format reuse in a journal/magazine via Copyright Clearance Center.)}
\label{fig:seena2011_fig12a}
\end{center}
\end{figure*}

\begin{figure*}[ht]
\begin{center}
\framebox[\linewidth][c]{ \raisebox{50mm}{\footnotesize Waiting for permissions. See original publication.}}
\caption[Turbulent cavity flow: DMD modes \citet{Seena:2011ft}]{DMD modes inside the cavity at $Re = 12000$ for modes labeled 1,2,3, and 4 in figure \ref{fig:seena2011_fig12a}. (Original in \citet{Seena:2011ft}, International journal of heat and fluid flow by Institution of Mechanical Engineers (Great Britain) Reproduced with permission of Institution of Mechanical Engineers] in the format reuse in a journal/magazine via Copyright Clearance Center.)}
\label{fig:seena2011_fig13}
\end{center}
\end{figure*}

\begin{rem}
Figures \ref{fig:rowley2009_fig2} and \ref{fig:seena2011_fig12a} show that the Koopman spectrum, as computed by the DMD algorithm, has some eigenvalues inside the unit circle, even though an initial block of data points was discarded before computation so that transients could die out.  They are most likely spurious eigenvalues resulting from the finite truncation of the data and the DMD algorithm and not due to slowly decaying modes contained in the data.  Unfortunately, no detailed investigation has been done on such eigenvalues; most attention has been focused on the modes corresponding to eigenvalues on the unit circle and those inside the unit circle have been ignored.
\qed\end{rem}

\subsubsection{Energy efficiency in buildings\cite{Eisenhower:2010tv,Georgescu:2012tt}}
Energy use and efficiency in buildings has received much attention in recent years and Koopman mode analysis has recently seen application in this field, as well.  A researcher is generally interested in temperature distributions in buildings, as this can tell much about HVAC and controller performance or if the building is operating near its design point\cite{Eisenhower:2010tv,Georgescu:2012tt}.  Such measurements can also be used for model validation\cite{Eisenhower:2010tv,Georgescu:2012tt}.

Analysis requires understanding of the heat flow in the building subject to forcing from weather or even human traffic between areas of the building.  In the most general case, heat transfer equations must be solved for a complicated domain with varying boundary conditions.  The interesting observables are the temperatures at each point in the building, $\mbb B$.  The temperature distributions in the building are assumed to exist in some function space ($L^2(\mbb B, d\mbf x)$, for example).  A temperature distribution can be represented in a basis $\{ b_{1}, b_{2}, \dots \}$ for the function space.  The state space $M$ is the sequence space of coefficients for these basis functions; i.e., a point $p \in M$ is $p = (c_1, c_2, \dots )$, for some coefficients $c_k \in \C$.  An observable maps a point in $M$ to the temperature at a point $\mbf x \in \mbb B$.  When collecting real data, the temperature is measured at a finite number of points, usually dictated by the placement of the temperature sensors by the building designers.  The vector-valued observable of interest is the vector of the temperatures recorded by the building sensors.

In the most general case, the full model for the building is too complicated to solve.  A variety of simplifying assumptions and a numerical package is needed to compute solutions to the building system.  EnergyPlus\cite{Crawley:2000tc} is a simulation package for modeling energy and water use in buildings.  It is a free program offered by the U.S. Department of Energy and is widely used.\footnote{\url{http://apps1.eere.energy.gov/buildings/energyplus/}}
At a high level, an EnergyPlus building model consists of specifying the location of every surface in the building.  A list of rooms is then created and the interaction of the rooms (between shared surfaces) is specified.  For example, heat conduction through a certain type of material may be specified for one wall, whereas radiative heat transfer is important for windows.  Models can also include weather data, HVAC usage, room occupancy, and building water usage.  The software makes the assumption that rooms are well-mixed so that no temperature gradients exist in the room.  Under the assumptions of the software, heat transfer in the building is approximated by a system of coupled ordinary differential equations.

In \citet{Eisenhower:2010tv}, Koopman mode analysis was used to decompose the temperature evolution in a building into purely periodic global modes.  The building investigated was the Y2E2 building at Stanford University and was modeled and simulated in the EnergyPlus software package.  The physical building has 2370 HVAC sensors recording data throughout the building\cite{Eisenhower:2010tv}.  The vector-valued observable chosen was the vector consisting of temperature readings taken at the sensors on the second floor\cite{Eisenhower:2010tv}.  Koopman mode analysis was used to validate the EnergyPlus model by comparing the most energetic Koopman modes for the simulation and the raw data.

Figure \ref{fig:eisenhower2010_fig5} shows the spectral content of the Koopman modes computed using the DMD algorithm with the sensor data on top and the EnergyPlus data on the bottom.  The sensors are along the vertical axis with the period of the Koopman mode along the horizontal axis.  A vertical streak in figure \ref{fig:eisenhower2010_fig5} corresponds to a ``global'' mode; i.e., most sensors are affected by the Koopman mode at that frequency.  In both images, a strong streak is seen at the 24 hour period which is due to the daily forcing from the weather.  Horizontal blue lines indicate near constant temperatures at those sensors and correspond to rooms served by their own fan units\cite{Eisenhower:2010tv}.  

Figure \ref{fig:eisenhower2010_figs6_7} shows the 24 hour Koopman mode for both the sensor data and the EnergyPlus model.  The top row corresponds to the sensor data, the bottom row to the simulation.  The magnitude of the mode is shown on the left with the phase on the right.  Note that the magnitude and phase of the Koopman mode is reported in reference to the outside air temperature (OAT).  The relative magnitude of a mode is given in decibels (dB)
	\begin{equation*}
	\abs{ KM_i } = 20 \log_{10} \abs{\frac{C_i(F)}{ C_{OAT}(F)}}
	\end{equation*}
and the phase is given in degrees
	\begin{equation*}
	\angle KM_i  = \angle C_i(F) - \angle C_{OAT}(F),
	\end{equation*}
where $C_i(F)$ and $C_{OAT}(F)$ are the Koopman modes as computed by the DMD algorithm.
For the sensor data, the magnitude of the 24 hour mode relative to the external temperature was about $-6$ dB throughout the building.  This implied that the peak temperature oscillation inside the building was smaller than the magnitude of the temperature fluctuations outside.  For the EnergyPlus mode, the temperature magnitude inside was much closer to the external fluctuations since the relative magnitude of the mode was about 0 dB.  Significant deviations  in the relative phases were also seen between the sensor data and the model.  These discrepancies implied that the parameters used in the simulation model were incorrect.  The analysis lead to suggestions on the model parameters to modify\cite{Eisenhower:2010tv}.

In practice, even under the simplifying assumptions utilized in programs such as EnergyPlus, a detailed model of the building can be prohibitively expensive to simulate over the time scales needed (e.g., hours up to years).  Methods for model reduction are in order.  Zoning approximations are one approach.  In a detailed EnergyPlus model, each room is treated as a unique thermal zone.  Each room is assumed well-mixed so that the thermal properties are uniform in the room.  The idea of zoning is to lump adjacent rooms together in such a way that the thermal properties can then be assumed uniform across those rooms.  This procedure results in less regions that need to be simulated in the model.  Usually, zoning approximations are performed heuristically\cite{Georgescu:2012tt}.  However, model accuracy is quite sensitive to the zoning approximation used\cite{Georgescu:2012tt}.

\citet{Georgescu:2012tt} used the notion of coherency between Koopman modes (def.\ \ref{def:km-coherency} above) to create zoning approximations for buildings and studied the Engineering Sciences Building (ESB) at the University of California, Santa Barbara as a test case for the methodology.  The particular observable chosen was $F = (f_1, \dots, f_m)^{\msf T}$, where each $f_j : M \to \R$, $j=1, \dots, m$, represented the temperature of a room in the ESB.  In the detailed EnergyPlus model, there were $m = 191$ zones.    An additional physical assumption was imposed so that rooms were grouped into a zone if they were both on the same floor and adjacent as well as $\eps$-coherent.  

The modes of interest were those having periods of one year, 24 hours, 12 hours, 8 hours, and 6 hours.  These corresponded to the most energetic modes.  Figure \ref{fig:georgescu2012_fig4} shows the Koopman modes of the ESB EnergyPlus simulation for the three most energetic modes.  The magnitudes of the Koopman modes are given in the top row.  Units are degrees Celsius relative to the average temperature in the room.  The average temperatures were not reported.   The bottom row corresponds to the phase in radians of the Koopman mode.  Consider the block of four rooms in the center of the right hand side of the building (the ones that are dark blue in the magnitude plot of the year long mode).  By visual inspection, the middle two rooms on this block would be lumped into a single zone.  While the colors of those two rooms vary between each of the six images in figure \ref{fig:georgescu2012_fig4}, within the same image those two rooms have the same color, and hence almost identical values.  These two rooms satisfy the $\eps$-coherency definition for some small $\eps > 0$.

\begin{figure}[ht]
\begin{center}
\includegraphics[width=0.45\textwidth]{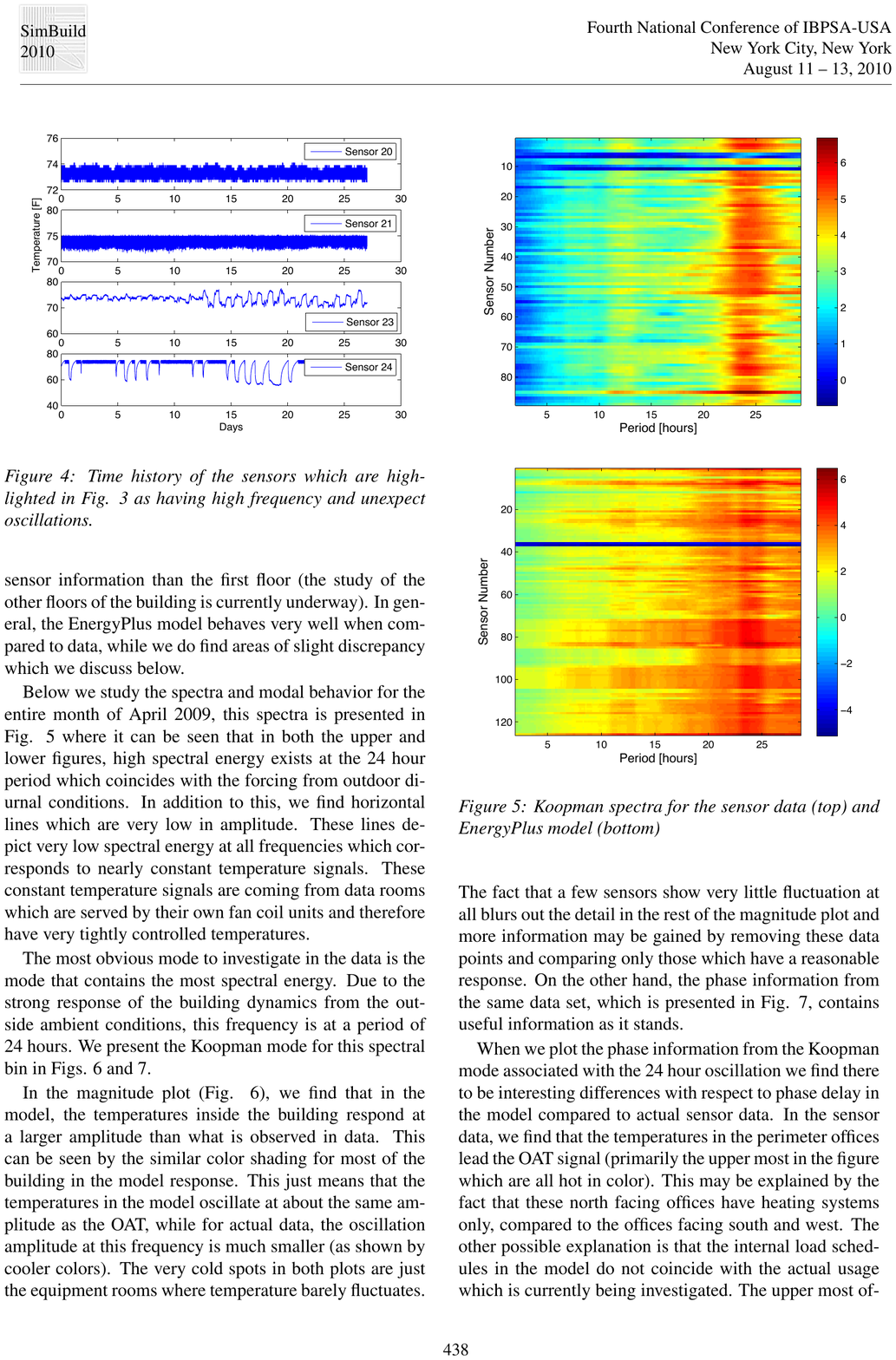}
\caption[Y2E2 building: comparison of data and EnergyPlus Koopman spectra. \citet{Eisenhower:2010tv}]{Y2E2 building decomposition - Koopman spectra for the sensor data (top) and the EnergyPlus model (bottom).  A large spectral content is seen at the 24 hour period due to outdoor forcing conditions.  Horizontal blue lines correspond to near constant temperatures at those sensors.  These sensors correspond to rooms that are served by their own fan units allowing a tight control of temperature.  Reproduced from \citet{Eisenhower:2010tv}.}
\label{fig:eisenhower2010_fig5}
\end{center}
\end{figure}

\begin{figure}[ht]
\begin{center}
\includegraphics[width=0.9\linewidth]{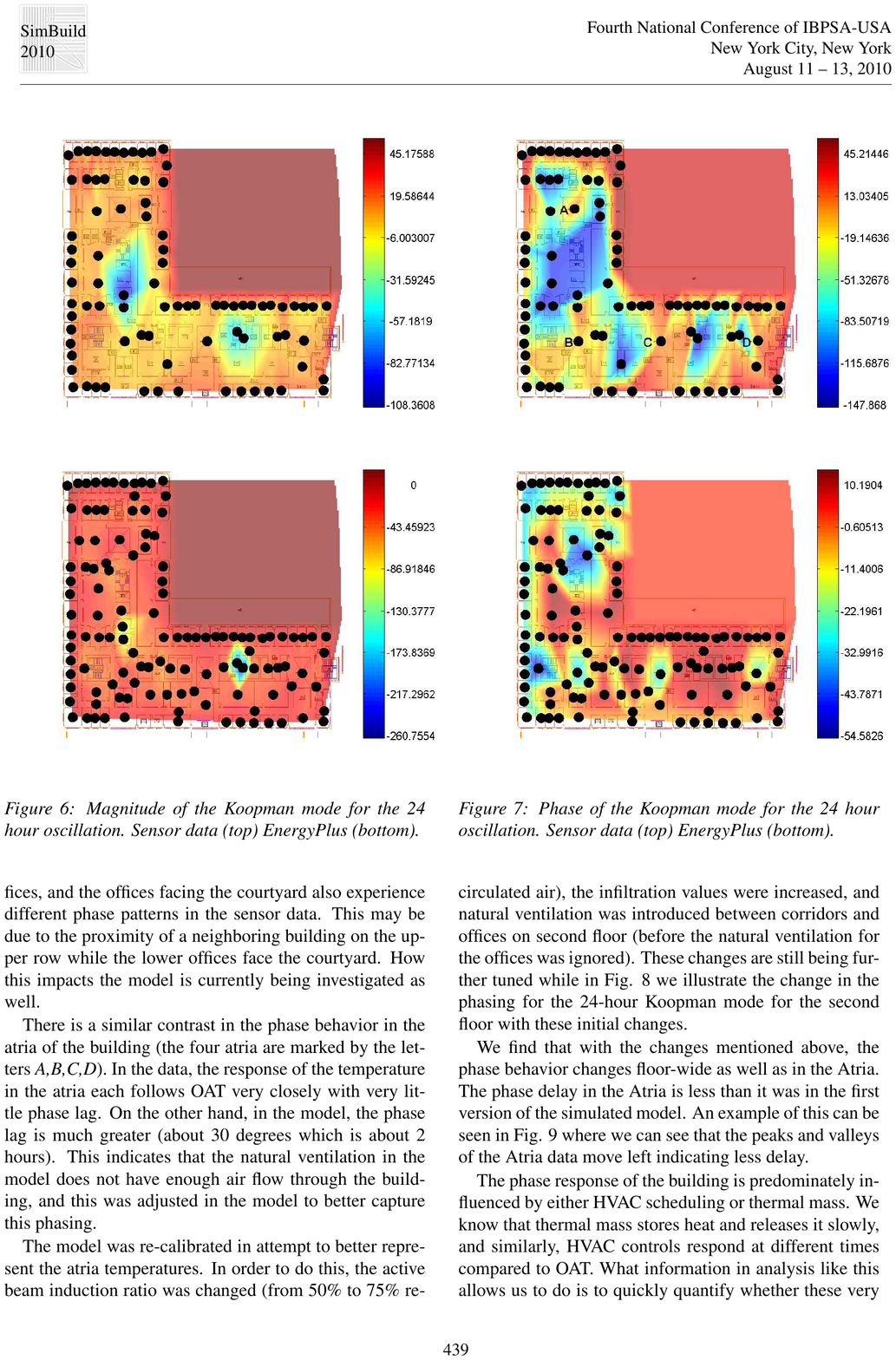}
\caption[Y2E2 building: Koopman mode for 24 hr period. \citet{Eisenhower:2010tv}]{Y2E2 building decomposition - Koopman mode for the 24 hour period.  (top row) The magnitude and phase, respectively, of the raw data's Koopman mode.  (bottom row) The magnitude and phase of the EnergyPlus model's Koopman mode. The magnitude units are given in decibels and phase in degrees.  Both are relative to the external 24 hour mode.  The discrepancy between scales shows the model mismatch with the real data.  Reproduced from \citet{Eisenhower:2010tv}.}
\label{fig:eisenhower2010_figs6_7}
\end{center}
\end{figure}

\begin{figure*}[ht]
\begin{center}
\includegraphics[width=0.6\linewidth]{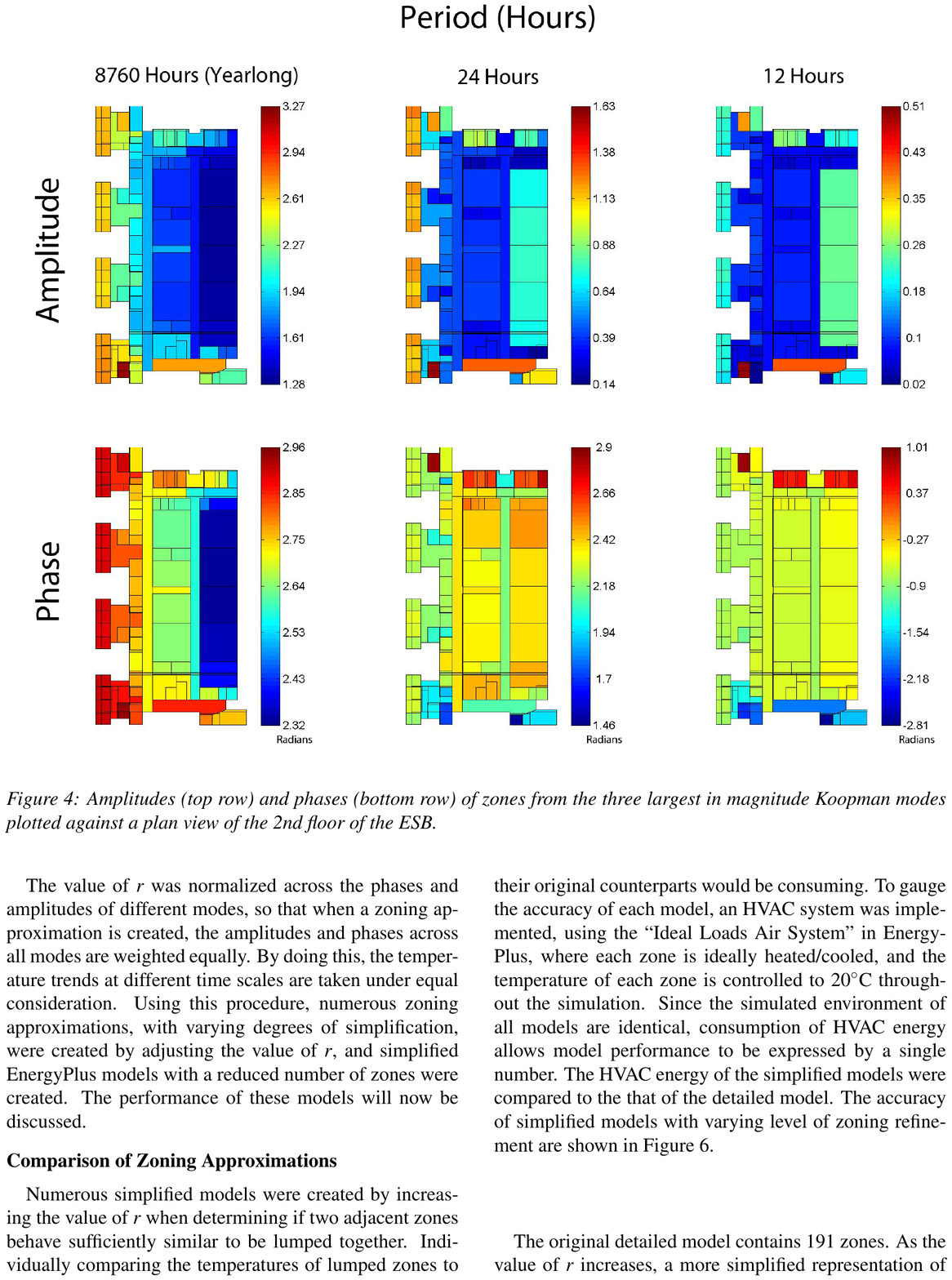}
\caption[Zoning approximations of buildings based on Koopman modes. \citet{Georgescu:2012tt}]{Zoning approximations of buildings - Koopman mode magnitude and phase for the modes having the largest magnitudes.   Only one floor of the ESB is shown.  (top) the magnitudes of the Koopman modes.  Units are in degrees Celsius relative to the mean temperature of the room.  (bottom) the phase of the Koopman mode in radians.  Reproduced from \citet{Georgescu:2012tt}}
\label{fig:georgescu2012_fig4}
\end{center}
\end{figure*}

\framebox[\linewidth][c]{ \raisebox{50mm}{\footnotesize Waiting for permissions. See original publication.}}
\framebox[\linewidth][c]{ \raisebox{50mm}{\footnotesize Waiting for permissions. See original publication.}}


\section{Analysis of state spaces using eigenquotients}\label{sec:state-space-analysis}
Previous sections discussed the Koopman eigenfunctions in the context of spectral decomposition of the Koopman operator, and we did not pay much attention to values that eigenfunctions take on the state space. In this section, we demonstrate that eigenfunctions carry information about transport between parts of the state space and provide a way to identify  invariant sets. Going further, we will endow the collections of invariant sets with their own metric topology, which allows for associating smaller invariant sets into larger coherent structures.

The material in this section appeared originally in a string of papers \cite{Levnajic:2010gq,Mezic:1999fu,Budisic:2012woa,Budisic:2009iy,Susuki:2009cw,Mezic:2004is} with the technical details given also in dissertations of two of the authors. \cite{Mezic:1994tv,Budisic:2012td}

\subsection{State-space analysis of Koopman eigenfunctions}
\label{sec:level-sets}

The level sets of Koopman eigenfunctions at eigenvalues \(\abs{\lambda}=1\) form invariant sets, and periodic and wandering chains of sets in the state space. The eigenfunction level-set partitions depend not only on the choice of the eigenspace \(E_{\lambda}\) from which eigenfunctions are taken, but also on the choice of a particular function in the eigenspace. However, an exhaustive procedure for analysis of level-set partitions can be devised, based on partition products, such that the result in the limit does not depend on the particular choice of eigenfunctions studied, but rather only on the eigenvalue \(\lambda\), which determines the dynamics of level sets, e.g.,  periodic or invariant. In this section, we first focus on the ergodic partition, which corresponds to analysis of the invariant eigenspace of the Koopman operator, i.e., \(\lambda = 1\). The generalization to ``periodic'' eigenspaces, i.e., those corresponding to \(\lambda = e^{i 2\pi \omega}\) for \(\omega \in \Q\), is straightforward, and we present it at the end of this section. 

From this point onward, we will assume the measurable setup of the Koopman operator: \(T:M \to M\) will be a measurable map between measure spaces \((M, \mathfrak B)\), with at least one invariant measure \(\mu\). The observables \(\mathcal F\) are, at the very least, a subset of complex-valued measurable functions.
Given an invariant function \(\phi\), i.e., a function in invariant eigenspace of the Koopman operator \(\phi \in E_{1}(U)\), we can form its level set partition \(\zeta(\phi) := \{ S_{z} : \forall z \in \C \}\), where the level sets are \(S_{z} := \{ x \in M : \phi(x) = z\}\).  Since \(\phi\) is a measurable function, its level sets \(S_{z}\) are measurable sets. Additionally, they are invariant sets, by virtue that each \(S_{z}\) collects \emph{all} state-space points on which \(\phi\) takes the value \(z\), i.e., if it contains \(x\), it will also contain \(T(x), T^{2}(x), \dots\), since \(\phi\) is constant along trajectories. A partition of the state space into invariant sets is called \emph{a stationary partition}.

Unless the system is ergodic, the choice of \(\phi \in E_{1}\) is not unique (see Remark \ref{rem:simplicity}): often there will be at least two functions \(\phi, \psi \in E_{1}\), which are not linearly related and yield two different partitions \(\zeta(\phi), \zeta(\psi)\). The \emph{product of two partitions} \(\zeta(\phi) \vee \zeta(\psi)\), which contains intersections of sets in each of the partitions (see Fig.~\ref{fig:part-product}), is again a stationary partition. The product is \emph{finer} than either of the original partitions, as sets in \(\zeta(\phi)\) or \(\zeta(\psi)\) can be recovered as unions of sets in \(\zeta(\phi) \vee \zeta(\psi)\). Naturally, given a finite set of functions \(\phi_{k}\), \(i=1,\dots,K\), the product \(\bigvee_{k=1}^{K}\zeta(\phi_{k})\) of the associated partitions can be formed incrementally.

\begin{figure}[htb]
  \centering
  \includegraphics[width=.45\textwidth]{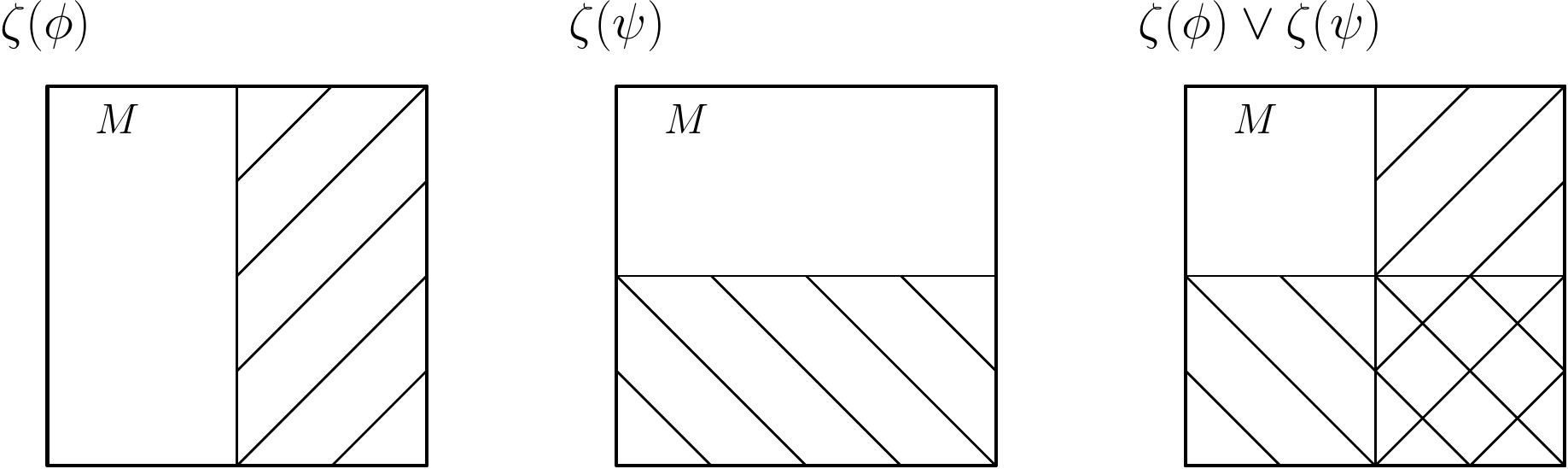}
  \caption{Partitions of the state space $M$ into $\zeta(\phi)$, $\zeta(\psi)$ and their product $\zeta(\phi) \vee \zeta(\psi)$.}
  \label{fig:part-product}
\end{figure}

Given a partition \(\zeta\) of the space and some set \(A\), \emph{the quotient map} \(\pi_{\zeta} : M \to A\) is any map that separates the partition \(\zeta\): \(\pi_{\zeta}(x) = \pi_{\zeta}(y)\) iff \(\exists S \in \zeta\) such that \(x,y \in S\). In other words, all the points in any \(S \in \zeta\) will be mapped to a single point \(p \in A\). The image set \(\xi := \pi(M)\) is termed \emph{the quotient} \(\xi\) of the partition \(\zeta\). If we are given just the map \(\pi_{\zeta}\), we could reconstruct the partition \(\zeta\) from level sets of \(\pi_{\zeta}\), in terms of values of the quotient map codomain \(A\). 

For a product partition \(\zeta_{K} = \bigvee_{k=1}^{K}\zeta(\phi_{k})\), the most straightforward way to construct a quotient map is by arranging eigenfunctions in a vector-valued function to define \(\pi_{K} : M \to \C^{K}\), \(\pi_{K}(x) := ( \phi_{1}(x), \phi_{2}(x), \dots, \phi_{K}(x) )\).  Notice, however, that the quotient map is not unique: any bijection \(h : \C^{K} \to A\), e.g., a coordinate transform, can be used to form another quotient map through composition \(h \circ \pi_{K}\). While the construction of \(\pi_{K}\) might appear academic, it is significant in that it translates the problem of constructing partition \(\zeta_{K}\) from the topological setting, where intersections of sets were used to check whether two points \(x,y\) are in the same product set, to an analytical setting, where the equality \(\pi_{K}(x) = \pi_{K}(y)\) can be checked, either numerically or analytically. We will make use of this fact heavily in Section \ref{sec:quotient}.

Evaluation of eigenfunctions \(\phi_{k}\), which feature in construction of quotient maps, is easy when we are given a method for evaluating the projection of the space of observables onto an eigenspace  \(P_{\lambda} : \mathcal{F} \to E_{\lambda}\). In this case, we can compute a dense set of eigenfunctions in \(E_{\lambda}\) by evaluating \(P_{\lambda} f\) for a dense set \(f_{k} \in \mathcal{F}\). Moreover, when \(\mathcal F\) is a Hilbert space, we can compute the projection on an orthogonal basis for \(\mathcal F\), obtaining a (not necessarily orthogonal) basis for \(E_{\lambda}\). A practical way to numerically evaluate \(P_{\lambda}\) for \(\abs{\lambda} = 1\) using trajectory averages is explained in Section \ref{sec:averaging}.

For \(\lambda = 1\), projecting functions \(f_{k}\) using \(P_{1}\) results in  an infinity of invariant functions that we can attempt to use in forming incremental level-set partitions. It is not trivial that the partition intersection can be extended to the case of an infinity of \(U\)-invariant functions \(\phi_{k}\). The first result of this sort was obtained by \citet{Sine:1968vj} for continuous observables  \(\mathcal{F} = C(M)\). Such setting is fairly restrictive for deterministic dynamics. Therefore, we focus on a more general result\cite{Mezic:1999fu,Mezic:2004is} that holds for \(\mathcal{F} = L^{1}(M,\mu)\). Assume we are given a countable, bounded family \(\phi_{k} \in E_{1} \subset L^{1}(M,\mu)\), for \(k = 1,2, \dots\),  whose span is dense in the invariant eigenspace \(E_{1}\) of the Koopman operator \(U\). The incremental partition into level sets 
\[
\zeta_e = \bigvee_{k=1}^{\infty}\zeta(\phi_{k})
\]
exists as the level set partition of the map \(\pi_{e} : M \to \ell^{\infty}\),  defined as \(\pi_{e} := (\phi_{1}, \phi_{2}, \dots)\), where \(\ell^{\infty}\) is the space of bounded sequences. The map \(\pi_{e}\) is termed the \emph{ergodic quotient map}, with \emph{ergodic quotient} \(\xi_{e} := \pi_{e}(\zeta)\) as its image set.

While the concept of the ergodic partition was known since at least Rokhlin, \cite{Rokhlin:1949te,Rokhlin:1966te} this computationally feasible construction is fairly recent.\cite{Mezic:1999fu,Mezic:2004is} The justification for the name of the ergodic partition comes from analyzing restrictions of $T$ to invariant sets $S \in \zeta_e$. Each $S$ carries a measure $\mu_S$ such that the restricted dynamics $T : S \to S$ is \emph{ergodic with respect to} $\mu_S$. By one of the several equivalent definitions of ergodicity,\cite{Petersen:1989uv} this  means that for any \(f \in L^{1}(S, \mu_{S})\)
\begin{equation}
\int_{S} f\mu_{S} = \lim_{N \to \infty} \frac{1}{N} \sum_{n=0}^{N-1} f \circ T^{n}(x),\label{eq:erg-space-time}
\end{equation}
at almost every \(x \in S\), with respect to \(\mu_{S}\). The measures \(\mu_{S}\) are called \emph{ergodic measures} and are extreme points of the space of invariant measures (by the Ergodic Decomposition Theorem). \cite{Katok:1995th} Moreover, integrals against ergodic measures define the projection onto the invariant eigenspace \(P_{1} f(x) = \int_{S(x)} f d\mu_{S(x)}\), where \(S(x)\) denotes the ergodic set containing point \(x\). This is a consequence of the equality \eqref{eq:erg-space-time} along with Mean Ergodic Theorems,\cite{Budisic:2012td} e.g., von Neumann theorem for \(\mathcal F = L^{2}(M,\mu)\) or Yosida ergodic theorem for general Banach spaces, used in earlier sections as Theorem \ref{thm:krengel-mean-ergodic-thm}.

The ergodic partition is unique up to \(\mu\)-measure zero sets, i.e., given two stationary partitions \(\zeta_{1}\) and \(\zeta_{2}\) that satisfy the above properties, for any \(S_{1} \in \zeta_{1}\) there exists \(S_{2} \in \zeta_{2}\), such that \(\mu(S_{1} \triangle S_{2}) = 0\). Consequently, the partition \(\zeta_{e}\) does not depend on the choice of functions \(\phi_{k}\) used to construct it: different choices of functions just yield different \emph{representations} of the quotient map \(\pi_{e}\), however, the limit partition will be the same.

When there is only one \(T\)-invariant measure \(\mu\), up to multiplication by a scalar, the system is \emph{uniquely ergodic}. In that case, the eigenspace \(E_{1}\) is one-dimensional, containing only a.e.\ constant functions. In turn, the ergodic partition contains a single set, and the ergodic quotient map maps almost all points (with respect to \(\mu\)), into a single point in the sequence space \(\ell^{\infty}\). While uniquely ergodic systems might be of interest in general, they are not the target of this approach. The ergodic quotient analysis is intended to help the analysis of state spaces which contain a lot of ergodic measures, e.g., systems with families of periodic or quasiperiodic orbits and systems possessing interspersed regions of regular and irregular dynamics.

For most conservative regular systems, ergodic partitions are similar to partitions of the state space into orbits. However, in zones of irregular dynamics, e.g., on strange attractors and in zones of strong mixing, it is difficult to gain intuition about behavior of the systems simply from orbits.  Orbits are inherently zero-dimensional (for maps) or one-dimensional sets (for flows), but in chaotic regimes, every one of them densely fills a bigger set, possibly even a set of a positive invariant measure, e.g., positive volume. Moreover, in a mixing zone any two trajectories look nothing alike, yet they are contained in the same set and have the same statistical behavior. Through such reasoning, we might be interested not in description of trajectories themselves, but rather in description of minimal invariant sets containing individual trajectories. Precisely, the ergodic partition \(\zeta\) is a \emph{measurable hull} of the decomposition of the space into orbits \(T^{n}(x)\), i.e., a partition of the space into minimal measurable invariant sets that contain orbits. \cite{Rokhlin:1966te} In this sense, \(\zeta\) is the appropriate counterpart of the state space portrait in the context of measurable dynamical systems.

The understanding of the ergodic partition, derived from functions in the invariant eigenspace \(E_{1}\), can be extended to eigenspaces \(E_{\lambda}\) for which \(\abs{\lambda} = 1\). When the eigenfunctions are taken from a  ``periodic'' eigenspace \(E_{\lambda}\), for an eigenvalue with property \(\lambda^{b} = 1\), for some \(b \in \N\) (period), the extension is straightforward. Level sets  of \(\phi \in E_{\lambda}\) are then periodic sets, which are arranged in \(b\)-\emph{periodic chains}, \( \{ S_{a}, T(S_{a}), T^{2}(S_{a}), \dots, T^{b-1}(S_{a}) \}\). Similarly, if \(\lambda = e^{i2\pi\omega}\) is such that \(\omega \not \in \Q\), the chains of level sets extend into infinity, forming \emph{wandering chains}.
The partition \(\zeta(\phi)\) is again an invariant partition, in the sense that for each \(S \in \zeta(\phi)\), \(T(S) \in \zeta(\phi)\). However, \(\zeta(\phi)\) is not stationary, as \(S\) are not invariant sets; it is  an invariant \(b\)-periodic partition or, when \(\omega \not \in \Q\), an invariant wandering partition. The partition products also generalize: if \(\phi, \psi \in E_{\lambda}\), \(\zeta(\phi) \vee \zeta(\psi)\) will also be a \(b\)-periodic/wandering invariant partition. 

The invariant measures can be generalized to the concept of complex eigenmeasures,\cite{Mezic:2004is} which satisfy
\[
\mu(T^{-1} S) = \lambda^{-1} \mu(S).
\]
Integrating against extrema of eigenmeasures enables us to evaluate \(P_{\lambda}\) away from \(\lambda = 1\), and construct \emph{eigenquotient} maps \(\pi_{\lambda}(x) = (\dots, P_{\lambda} f_{k}, \dots)\), analogously to constructions of ergodic quotient maps from a basis \(f_{k}\) for \(\mathcal F\). In this sense, the ergodic quotient map is the eigenquotient map at \(\lambda = 1\).

The eigenquotient maps collect basis functions for the eigenspace \(E_{\lambda}\). Note that this basis set might be overdetermined, since we do not know ahead of time what \(\dim E_{\lambda}\) is. In Remark \ref{rem:simplicity}, we have indicated that the dimension of \(E_{\lambda}\) is bounded by the number of mutually singular components of measure \(\mu\) which was used to define the space of observables as \(\mathcal F = L^{p}(M,\mu)\). These components are precisely the ergodic measures, and there are as many of them as there are ergodic sets. 

Given an eigenfunction \(\phi \in E_{\lambda}\), for \(\abs{\lambda}=1\), its modulus and complex angle functions are respectively \(\abs{\phi}\), \(\angle \phi\), for which \(\phi(x) = \abs{\phi(x)} \exp[i2\pi\angle\phi(x)]\).
The modulus function \(\abs{\phi}\) is an invariant function, since \(U\abs{\phi} = \abs{\phi \circ T} = \abs{\lambda \phi} = \abs{\phi}\). On the other hand, \(\angle \phi \in E_{\lambda}\) is a factor map: it conjugates the dynamics of \(T\) with a dynamical system \(\theta \mapsto \theta + \omega\), evolving on a circle \(\mathbb S^{1}\), with \(\omega\) as the angle of the eigenvalue \(\lambda = r e^{i2 \pi \omega}\),\cite{Mezic:2004is,Wichtrey:2010ta} since 
\[
\angle \phi(Tx) = \angle \phi(x) + \omega.
\]

To illustrate what these functions might reveal about dynamics, we will take the Chirikov Standard Map
\begin{align} 
  \begin{aligned}
    x_{n+1} &= x_{n} + p_{n} + \epsilon \sin(2\pi x_{n}) \\
    p_{n+1} &= p_{n} + \epsilon \sin(2\pi x_{n})
  \end{aligned} \label{eq:std-map}
\end{align}
with \(\epsilon=0.15\). The system preserves the area measure \(dm\) , therefore, the Koopman operator \(U:L^{2}(M,dm) \to L^{2}(M,dm)\) is unitary and its eigenvalues all lie on the unit circle. Figure \ref{fig:stdmap-efunctions} shows 
level sets of an invariant eigenfunction, and modulus and angle of an eigenfunction at \(e^{i2\pi\omega}\) for \(\omega = 1/3\), demonstrating the distinction between \(\phi \in E_{1}\) and the modulus of \(\psi \in E_{\lambda}\).

\begin{figure*}[htb]
  \centering
  \begin{subfigure}[t]{.3\linewidth}\centering
    \includegraphics[height=42mm]{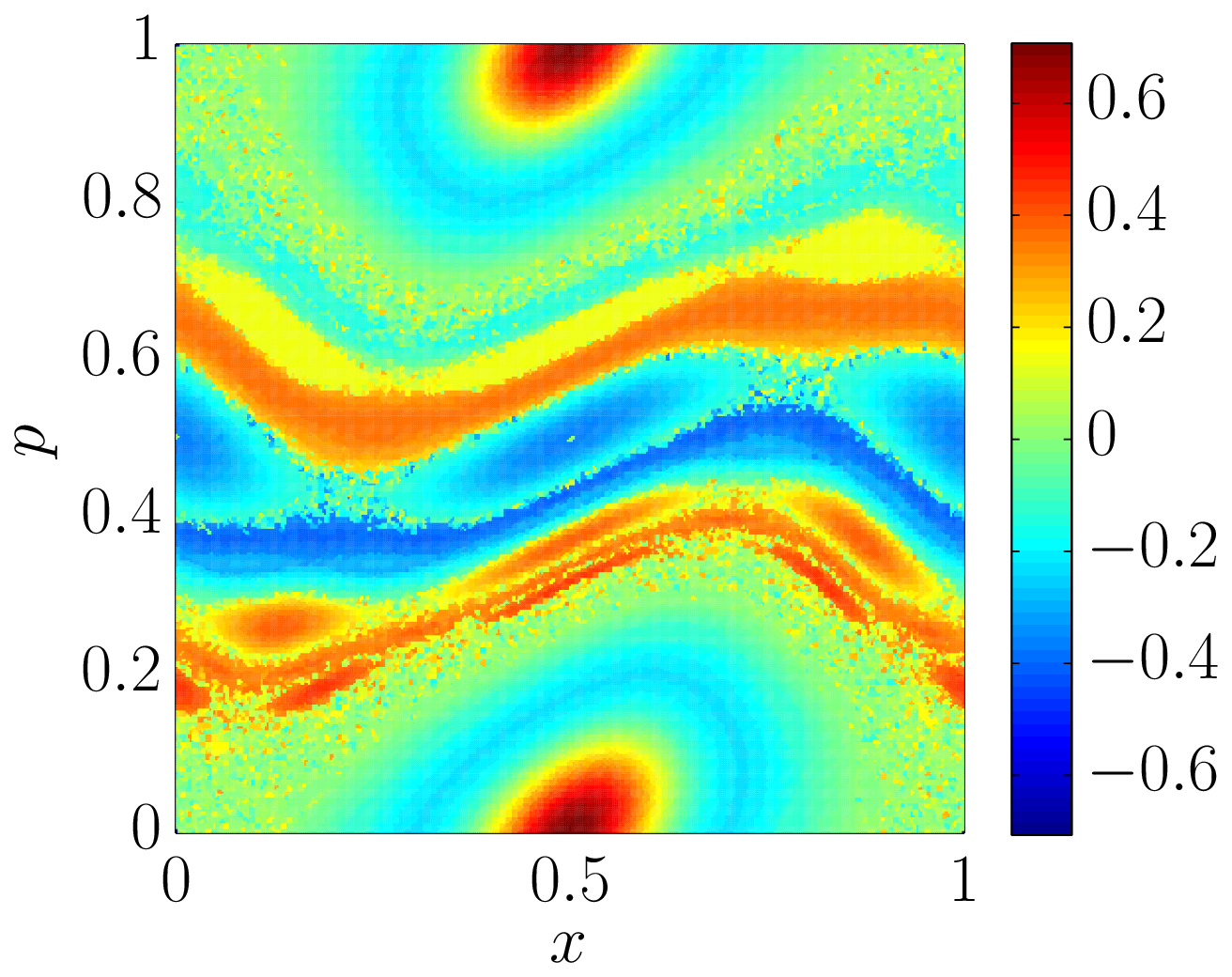}
    \caption{Invariant eigenfunction ${\phi \in E_{1}}$.\label{fig:stdmap-invariant}}
  \end{subfigure}\hspace{.04\linewidth}
  \begin{subfigure}[t]{.3\linewidth}\centering
    \includegraphics[height=42mm]{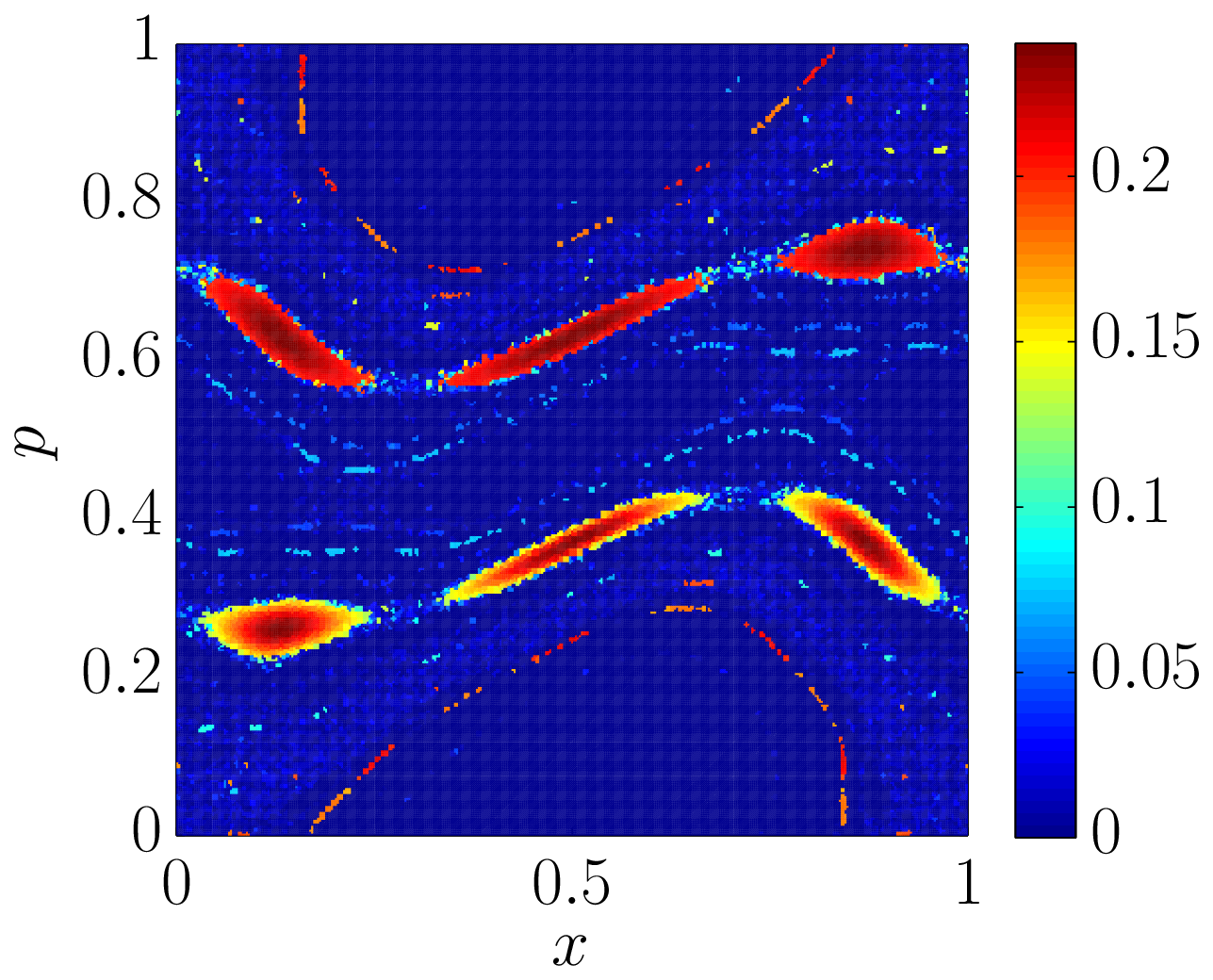}
    \caption{Modulus \(\abs{\psi}\) of a \(3\)-periodic eigenfunction $\psi$. Modulus of any eigenfunction for \(\abs{\lambda} = 1\) is an invariant function. \label{fig:stdmap-modulus}}
  \end{subfigure}\hspace{.04\linewidth}
  \begin{subfigure}[t]{.3\linewidth}\centering
\includegraphics[height=42mm]{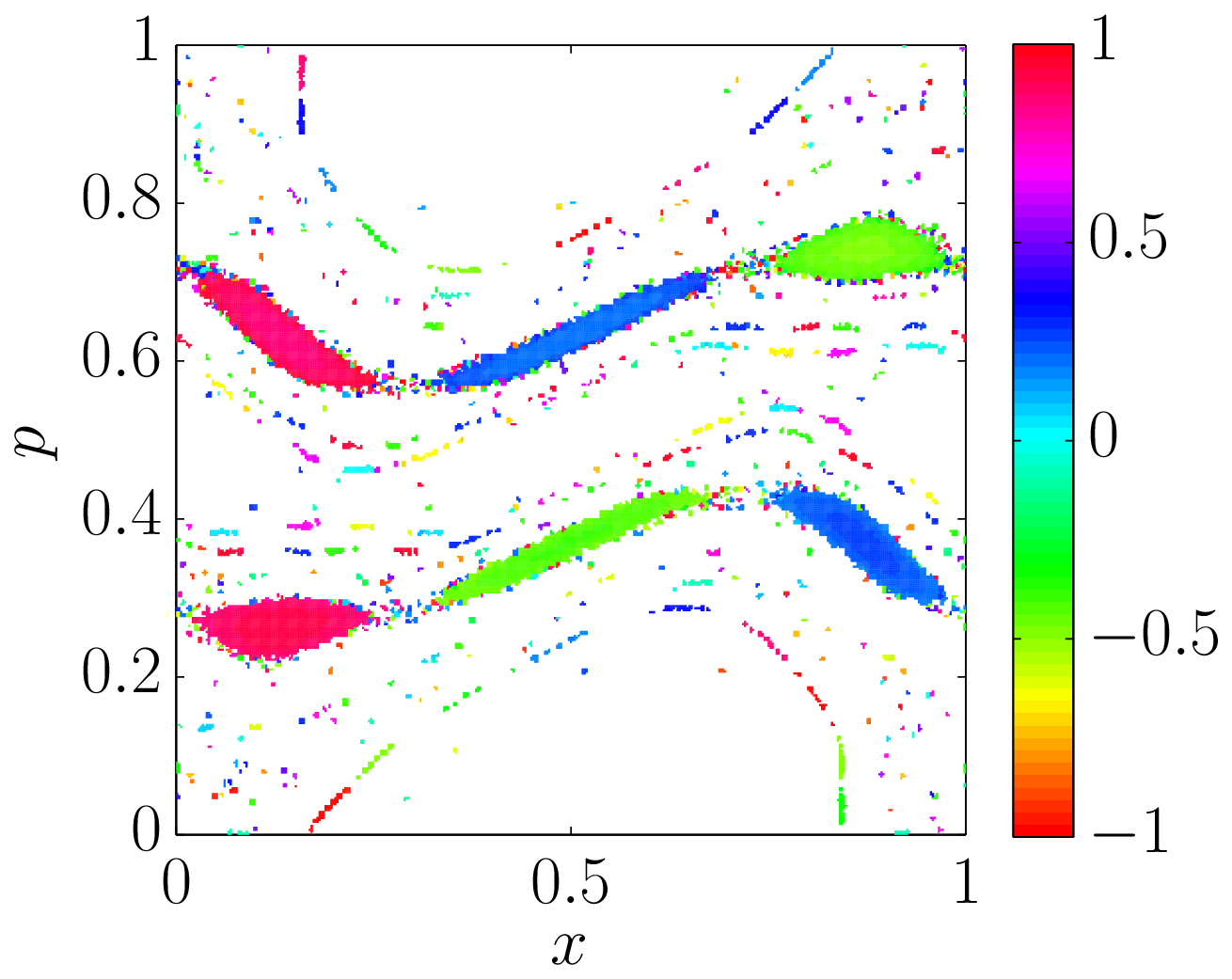}
    \caption{Angle \(\angle \psi\)  of a \(3\)-periodic eigenfunction $\psi$. Angle of an eigenfunction at is a factor map for dynamics. (Only regions where angle is well defined, i.e., \(\psi \not = 0\), are colored.) \label{fig:stdmap-angle}}
  \end{subfigure}
  \caption[Level sets of invariant and periodic eigenfunctions.]{Level sets of eigenfunctions \(\phi \in E_{1}\) and \(\psi \in E_{\lambda}\) for  \(\lambda = e^{i2\pi\omega}\) at \(\omega = 1/3\) for the Chirikov Standard Map at \(\epsilon = 0.15\). (Functions were evaluated by computing Fourier averages, see Section \ref{sec:averaging})} \label{fig:stdmap-efunctions}
\end{figure*}

Even though both \(\abs{\psi}\) and \(\phi\) are invariant functions, \(\abs{\psi}\) is non-zero only over trajectories that are periodic with period \(3\). At \(\epsilon = 0.15\), such trajectories are mostly concentrated in two large groups of periodic chains near the middle of the state space. On the other hand, invariant functions that are not formed by taking absolute values of periodic functions do not necessarily make a distinction between periodic and non-periodic dynamics. The angle function \(\angle \psi\) can be used to infer the order in which sets are ordered in the associated periodic chains. Even such simple visualizations can be useful aids in quick assessment of a new dynamical system, by identifying regions that are not dynamically connected.

\subsection{Geometry of eigenquotients}
\label{sec:quotient}

In previous section, we explained how eigenfunctions of the Koopman operator connect to invariant and periodic structures   in the state spaces of dynamical systems. We have also indicated that the finest invariant partitions \(\zeta\) can be represented in sequence spaces using quotient maps \(\pi : M \to \ell^{\infty}\) and their image sets, eigenquotients \(\xi = \pi(M)\). In this section, we will be treating geometry of sets \(\xi\), explaining it for the case of the ergodic quotient, which corresponds to invariant sets. As explained in the previous section, a generalization to periodic sets is straightforward.

Identification of ergodic sets can be thought of as checking whether for two state space points \(p_{1},p_{2} \in M\) it holds that \(\pi(p_{1}) = \pi(p_{2})\) or not; essentially, we are using a discrete topology on ergodic quotient \(\xi\) to compare points in the state space \(M\). We can, however, use other metric topologies on the ergodic quotient to extract additional information about the state space, e.g., to obtain a low-dimensional representation of dynamics, or identify functions acting as integrals of motion over invariant sets, not necessarily entire state spaces. By a low-dimensional representation of dynamics, we mean that we are looking for a minimal set of directions in the state space, in which we can move a point across the boundary of an ergodic set \(S\) and land in another ergodic set \(S'\) where trajectories look similar, on average, to those in \(S\).

\begin{figure}[htb]
  \centering
\framebox[\linewidth][c]{ \raisebox{50mm}{\footnotesize Waiting for permissions. See original publication.}}
  \caption[Reeb graphs for three Hamiltonian systems. \citet{Budisic:2012woa}]{Sketches of two Hamiltonian oscillators: a harmonic oscillator and a double-well oscillator. Top row: state space portrait. Bottom row: graphic representation of Fomenko-Reeb graphs for level-sets of Hamiltonian functions. Dashed arrows indicate how moving an initial condition across the level sets reflects on the Reeb graph.
(Based on original in \citet{Budisic:2012woa}, Physica D: Nonlinear phenomena by North-Holland. Reproduced with permission of North-Holland in the format reuse in a journal/magazine via Copyright Clearance Center.)}
  \label{fig:reeb}
\end{figure}

As a motivation for the analysis of ergodic quotient we take the Morse theory analysis of Hamiltonian systems. Consider two state spaces sketched in the top row of Figure \ref{fig:reeb}. Each state space contains a region where trajectories are tightly layered together, and where they can be parametrized using a single continuous parameter: distance from elliptical fixed points.  For Hamiltonian systems, these properties can be formally stated by inducing a topology on the state space based on the energy function. In other words, the ``similarity'' of neighboring ergodic sets that we are after is here represented by the similar values that the energy function attains on neighboring ergodic sets.

Level sets of any monotonic function of the Hamiltonian can be represented by \emph{Reeb graphs} sketched next to the state spaces. Reeb graphs are a topological tool, featured in Morse theory, which analyzes manifolds through level sets of differentiable functions on them. The Reeb graphs provide a concise description of changes in topology of such level sets as the level, i.e., value of the function, is changed. In dynamical systems, they have been used to study level sets of integrals of motion for integrable Hamiltonian systems;\cite{Bolsinov:2004dw,Dragovic:2009tf} in this context, they are known as Fomenko-Reeb graphs. Since integrals of motion are invariants of projections \(P_{1}\) onto the \(\lambda=1\) eigenspace, in certain settings we could draw parallels between Fomenko-Reeb graphs and the ergodic quotient \(\xi\).

The vertices in Fomenko-Reeb graphs correspond to those values of the Hamiltonian \(H : M \to \R\) at which the topology of level sets of \(H\) changes. The edges connecting the vertices correspond to families of connected components of level sets that are homotopically related with respect to continuous variation of values of \(H\). Figure \ref{fig:reeb}  illustrates what the Fomenko-Reeb graphs look like for two simple Hamiltonian systems. Without going into details of their construction here, an important feature of these graphs is that continuous segments in them correspond to one-parameter families of periodic orbits. For the double-well oscillator, strands merge at the value of energy where two separate wells merge across the separatrix to form the outer family of periodic orbits. Therefore, by looking at number of independent parameters in a Fomenko-Reeb graph, we can deduce the number of families of periodic orbits in the state space.

Currently, Fomenko-Reeb graph analysis is formulated only for integrable Hamiltonian systems.\cite{Bolsinov:2004dw} By analyzing the ergodic quotient, we can establish an approach similar in spirit, even for systems that do not have an explicit energy function, but might contain families of invariant sets in the state space. The ergodic quotient map collects, in a sense, \emph{all} possible invariant functions, that can be thought as generalizations of energy. Unlike the case of Hamiltonian systems, where topology of the energy codomain was used to define topology on the state space, the ergodic quotient \(\xi\) is a subset of an infinite-dimensional sequence space \(\ell^{\infty}\), which can be endowed with many euclidean-like topologies that are not all mutually equivalent. For this reason, we need to choose a metric structure \((\xi, d)\), where \(d\) is a distance function, so that we can obtain a context in which a generalization of Reeb analysis of Hamiltonian systems to non-Hamiltonian systems is possible. 

The role played by the Hamiltonian function in Fomenko-Reeb graphs is taken up by the quotient map \(\pi\). The image space of Hamiltonian was \(\R\) with its natural metric topology; the \(\ell^{\infty}\) topology on the ergodic quotient \(\xi\) is too sensitive to establish analogous arguments. Instead, we will use a Sobolev space metric which requires interpretation of the image under the quotient map \(\pi(x)\) as a spatial Fourier transform of ergodic measures \(\mu_{x}\). Within this setting,  we seek to extract coherent onion layers in the state space by identifying connected components in \((\xi, d)\). The challenge lies in extracting a particularly appropriate low-dimensional parametrization of \(\xi\) even though it is embedded within the infinite-dimensional space \(\ell^{\infty}\).

 First, as observables which will be projected onto \(E_{1}\) we choose the  normalized harmonic basis for \(\mathcal F\), e.g., on \(M \cong [0,1]^{D}\) 
\[
f_{k}(x) = e^{ i 2\pi k \cdot x }, 
\]
where \(k \in \Z^{D}\) and \(k \cdot x = \sum_{j=1}^{d} k_{j} x_{j}\). This results in the representation of the ergodic quotient map \(\pi\) 
\begin{align*}
  \pi(p) &= ( \dots, [P_{1} f_{k}](p), \dots )\\
  &= ( \dots, \hat \mu_{p}(k), \dots )
\end{align*}
that  maps the points of the state space \(p\) to Fourier coefficients \(\hat\mu_{p}(k) := \int_{M} f_{k} d\mu_{p}\) of the associated ergodic measure \(\mu_{p}\). This opens up opportunities to use metrics available on the space of Fourier coefficients, e.g., weighted \(\ell^{2}\) metrics, to compare dynamics in different ergodic sets, and generalize the Fomenko-Reeb analysis from Hamiltonian systems to a more general class of systems. Note, however, that when we are interested only in identifying ergodic partitions using discrete topology on \(\xi\), i.e., ask whether \(x,y \in M\) are elements of the same ergodic set \(S \in \zeta\) by checking if \(\pi(x) = \pi(y)\) , any continuous basis of observables will be sufficient.\cite{Mezic:2004is} The choice of basis is largely a matter of convenience of constructing other metric structures on \(\xi\), alternative to the discrete metric.

Due to boundedness of Fourier coefficients, the ergodic measures are elements of the Sobolev space \(W^{2,-s}\), whose norm can be defined as a weighted euclidean norm
\begin{align}
  \norm{ \mu }^{2}_{2,-s} := \sum_{k \in \Z^{D}} \frac{ \abs{\hat
      \mu(k)}^{2} }{\left[1 + (2k\pi)^{2} \right]^{s}},\label{eq:neg-sob-norm}
\end{align}
with the index determined as \(s = (D+1)/2\) where \(D = \dim M\). We use \(\ell^{2,-s}\) as the symbol for the associated Fourier coefficient space with the weighted euclidean norm. An excellent introduction to the Sobolev space theory is the classic textbook by \citet{Adams:2003wi}, which contains all the material relevant for this analysis.\cite{Budisic:2012td}

To establish a metric structure on the ergodic quotient, we use the \(\norm{.}_{2,-s}\)\nobreakdash-induced metric, and analyze continuity of ergodic quotient maps \(\pi\) in it. The \(\ell^{2,-s} \cong W^{2,-s}\) metric structure induces a distance-like function on the state space 
\begin{equation}
d_{s}(p_{1},p_{2})^{2} := \norm{ \mu_{p_{1}} - \mu_{p_{2}} }^{2}_{2,-s}.\label{eq:sob-pseudodistance}
\end{equation}
This function is not a true distance but a pseudo-distance precisely because the points at zero distance from each other are those that lie in the same ergodic set. Clearly, when the system is uniquely ergodic with respect to \(\mu\), the function \(d_{s}\) evaluates to zero for \(\mu\)-almost any pair of points \(p_{1}, p_{2}\).

\begin{rem}The choice of the Sobolev space \(W^{2,-s}\) has a justification in comparison of ergodic measures interpreted as measures of residence times of trajectories in measurable sets of a compact state space \(M\).\cite{Mathew:2011ev,Mathew:2005eq} Let \(B(x,r)\) denote euclidean balls in \(M\) and \(\chi_{x,r}\) their characteristic (indicator) functions. The quantity \(\mu_{p}[B(x,r)]\) is the residence time of trajectory \(T^{n}(p)\) in \(B(x,r)\), due to ergodicity of the system \(T : S \to S\), i.e., equality of integrals and time averages \eqref{eq:erg-space-time}:
\[
\mu_{p}[B(x,r)] = \int_{S} \chi_{x,r} d\mu_{p} = \lim_{N \to \infty} \frac{1}{N} \sum_{n=0}^{N-1} \chi_{x,r} \circ T^{n}(p)
\]
where \(S \subset M\) is the element of the ergodic partition containing the initial condition \(p\). 

The distance between trajectories originating at \(p_{1}, p_{2}\) can then be formulated by integrating the difference in residence times over all spherical sets, with \(R\) chosen such that \(M \subset B(x,R)\):
\begin{equation}
d(p_{1},p_{2})^{2} := \int_{\mathrlap 0}^{\mathrlap R}\int_{M} \abs{\mu_{p_{1}}[B(x,r)]-\mu_{p_{2}}[B(x,r)]}^{2}dx dr. \label{eq:emp-pseudodistance}
\end{equation}
The choice for the index of Sobolev norm \(s = (D+1)/2\) results in the equivalence of norms, i.e., existence of constants \(\alpha,\beta > 0\) such that for any \(p_{1},p_{2} \in M\) 
\[
\alpha\ d(p_{1},p_{2}) \leq d_{s} = \norm{\mu_{p_{1}} - \mu_{p_{2}}}_{2,-s} \leq \beta\ d(p_{1},p_{2}).
\]
This argument shows that \eqref{eq:emp-pseudodistance} is again only a pseudo-distance, like \(d_{s}\).
We will return to this interpretation in Section \ref{sec:ctsinds}; the formulation \eqref{eq:neg-sob-norm} is more useful in analysis of the ergodic quotient \(\xi\).
\end{rem}

The space $(\xi, \norm{.}_{2,-s})$ can be analyzed computationally, allowing for extraction of coherent structures in state spaces of dynamical systems. We are looking to extract connected components of \(\xi\), i.e., curves $\mathcal C \subset \ell^{2,-s}$. In our motivational Hamiltonian examples, regions mapping to curves in Fomenko-Reeb graphs (see Fig.~\ref{fig:reeb}) contained an onion layering of trajectories, which could be interpreted as regions of uniform dynamical behavior. We can visualize such regions by coloring all the points $x \in \pi^{-1}(\mathcal C)$ using the same color. 

In a recent paper,\cite{Budisic:2012woa} we have presented an algorithm that performed the analysis described above. Using averaging of harmonic observables along trajectories (see Sec. \ref{sec:averaging}), it evaluates an approximation of the ergodic quotient map \(\pi\) on a set of initial conditions \(\{p_{n}\}_{n=1}^{N}\)  covering the region in state space that is to be analyzed. The connected components are extracted based on pairwise evaluations of the induced Sobolev metric \(d(p_{1}, p_{2}) \sim \norm{\mu_{p_{1}} - \mu_{p_{2}}}_{2,-s}\). 

The Diffusion Maps algorithm\cite{Coifman:2005bk,Coifman:2006cy} was used to provide a coordinate change for the ergodic quotient \(\xi\).  The particular ergodic quotient map \(\pi\) used earlier was constructed using averaged harmonic functions as coordinates. This choice was driven by desire to efficiently evaluate the Sobolev metric as a weighted euclidean metric. However, the harmonic basis set is chosen without any regard for intrinsic geometry of \(\xi\), and therefore it might not be the most efficient way of analyzing geometry of \(\xi\) .

The Diffusion Maps algorithm interprets \(\xi\) as a heat-conductive object, and computes the modes of heat spread along it, resulting in a coordinate change \(\Psi : \xi \to \ell^{2}\).
The \(\ell^{2}\) distance between points \(\Psi(\xi)\) corresponds to the diffusion distance: an intrinsic, coordinate-independent distance along the ergodic quotient. This reflects the fact that the Diffusion Maps algorithm obtains intrinsic geometry of \(\xi\), regardless of the coordinate system that \(\xi\) was originally represented in. Components of \(\Psi\) are functions \(\psi_{k} : \xi \to \R\), ordered with \(k \in \N\) according to the spatial scales over which they vary. For example, if \(\xi\) is a simple line segment, as it is the case for the harmonic oscillator, \(\psi_{k}\) will just be Legendre polynomials: solutions of the heat equation with no-flux boundary condition. For more general \(\xi\), the diffusion modes \(\psi_{k}\) become more complicated, however, they retain the scale-ordering of harmonic functions. The \(\psi_{k}\) are good candidates for the analysis similar to the Reeb graph analysis for manifolds, which was previously noticed in medical image analysis.\cite{Shi:2008ef}

As mentioned earlier, a bijection composed with the ergodic quotient map \(\pi\) results in another ergodic quotient map. Therefore, by forming \(\chi := \Psi \circ \pi\), with components \(\chi_{k} = \psi_{k} \circ \pi\)  we obtain the ergodic quotient represented in a ``good'' coordinate set, which can be efficiently truncated to obtain low-dimensional representations of the ergodic quotient. The euclidean distance over a truncated set of diffusion coordinates approaches the diffusion distance limit exponentially fast, in number of diffusion modes retained \cite{Coifman:2005bk}. Since euclidean distance is the most common distance used in applied problems, a host of off-the-shelf algorithms can be used to post-process the ergodic quotient, e.g., a \(k\)-means clustering or proximity graph analysis for extraction of connected components.

\begin{figure*}[htb]
  \centering
  \begin{subfigure}[t]{.45\linewidth}
    \includegraphics[height=50mm]{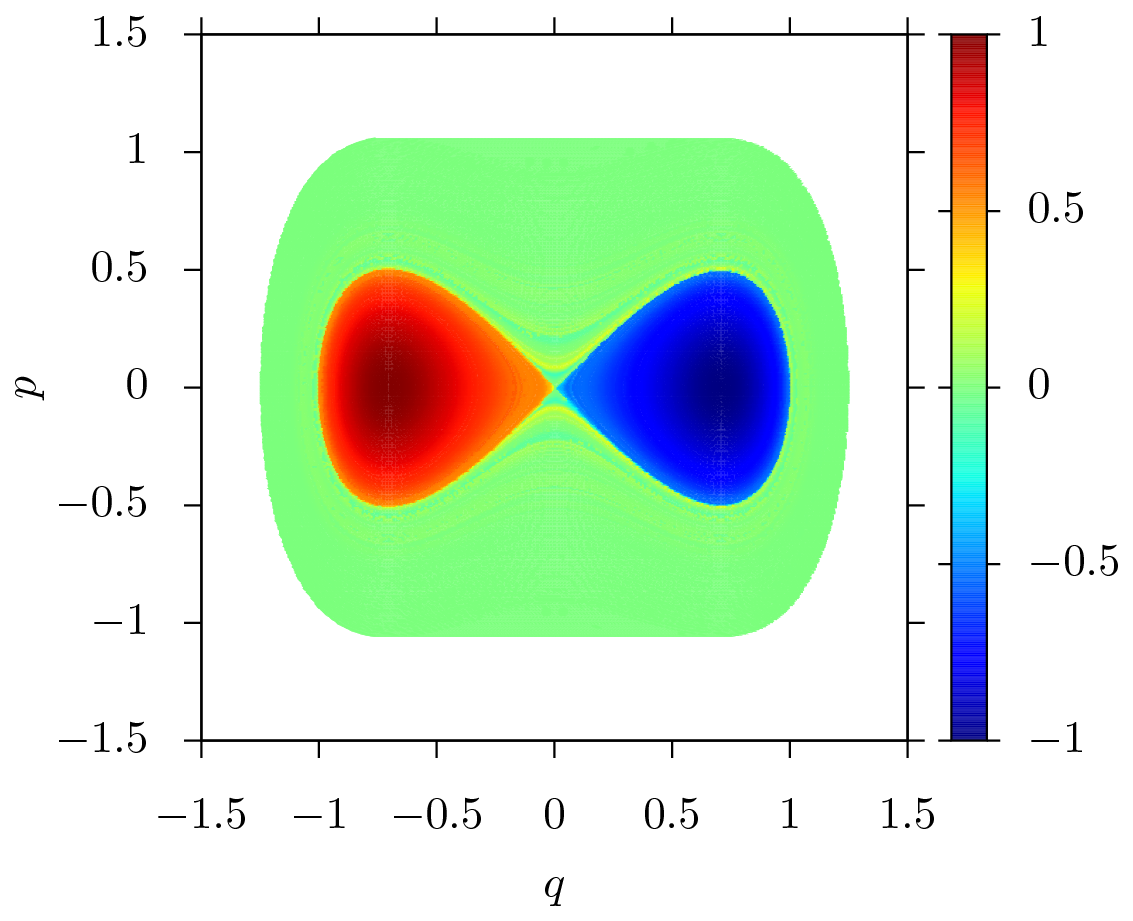}
    \caption{State space of a double-well potential. Color is the first diffusion coordinate $\chi_1$, corresponding to color in \protect\subref{fig:duffing-eq}}\label{fig:duffing-ss}
  \end{subfigure}
  \begin{subfigure}[t]{.45\linewidth}
    \includegraphics[height=50mm]{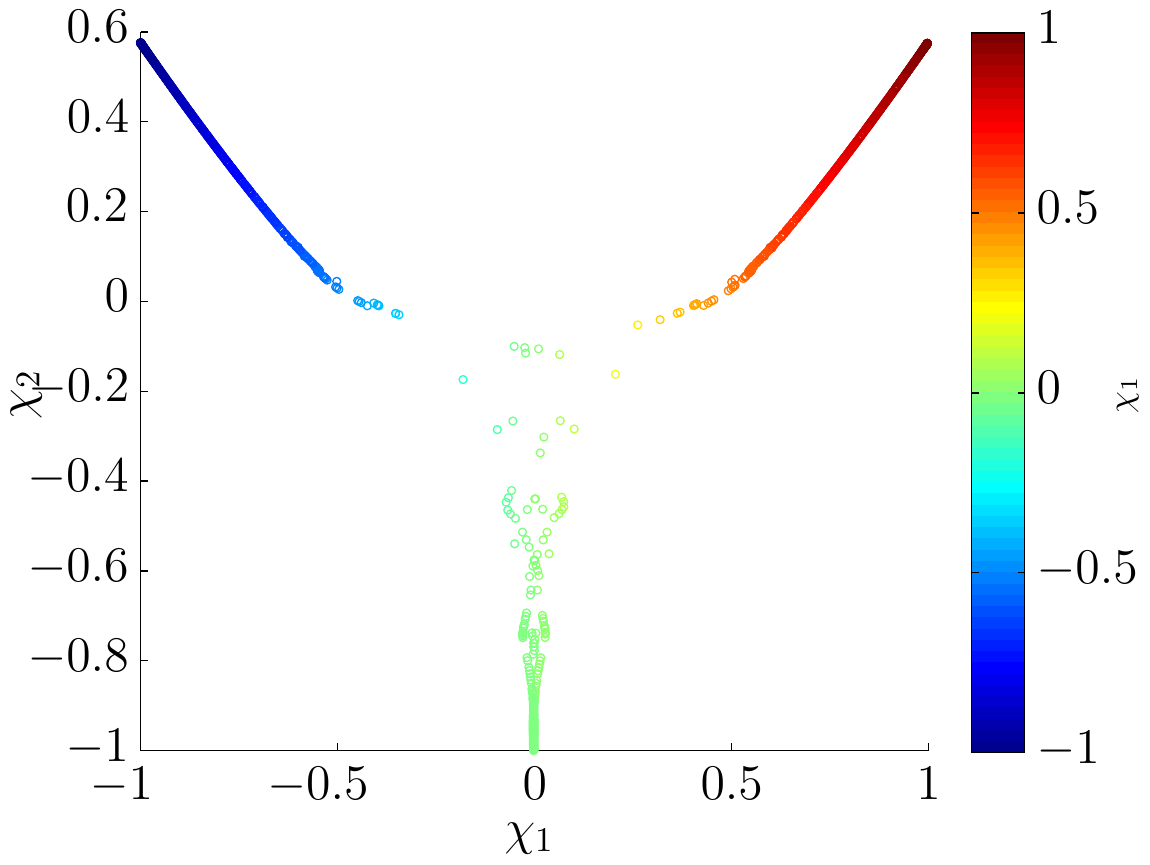}
    \caption{Embedding of $\xi$ for a double-well potential into first two diffusion coordinates. Colors are values of $\chi_1$ for easier comparison with \protect\subref{fig:duffing-ss}. (Cf. Fig.~\ref{fig:reeb}, center)}\label{fig:duffing-eq}
  \end{subfigure}\\
  \begin{subfigure}[t]{.45\linewidth}
\framebox[\linewidth][c]{ \raisebox{50mm}{\footnotesize Waiting for permissions. See original publication.}}
\caption{Two vortices in the Poincar\'e section of the periodic 3D Hill's vortex flow extracted based on continuous segments in $\xi$.}\label{fig:hill-ss}
  \end{subfigure}
  \begin{subfigure}[t]{.45\linewidth}
\framebox[\linewidth][c]{ \raisebox{50mm}{\footnotesize Waiting for permissions. See original publication.}}
  \caption{Embedding of $\xi$ for the periodic 3D Hill's vortex flow into first two diffusion coordinates.}\label{fig:hill-eq}    
  \end{subfigure}
  \caption[Ergodic quotients for double-well and 3d Hill oscillators. \citet{Budisic:2012woa}]{Geometric analysis of the ergodic quotient for a planar Hamiltonian system with Hamiltonian \eqref{eq:duffing} and a periodically forced 3d fluid-like flow defined by \eqref{eq:hill}.
(Original panels (\protect\subref{fig:hill-ss}) and (\protect\subref{fig:hill-eq}) in \citet{Budisic:2012woa}, Physica D: Nonlinear phenomena by North-Holland. Reproduced with permission of North-Holland in the format reuse in a journal/magazine via Copyright Clearance Center.)
}\label{fig:erg-quot-geom}
\end{figure*}

We use the double-well potential to illustrate this analysis, with results presented in Figure \ref{fig:duffing-eq}. The Hamiltonian function for this system, which serves as the basis for the well-known Duffing oscillator, is given by
\begin{align}
  H(q,p) = \frac{1}{2} p^2 - k\left ( \frac{1}{2} q^2 - \frac{1}{4}b
    q^4 \right);\label{eq:duffing}
\end{align}
in this analysis we chose $k = 1$, $b=2$. From Figure \ref{fig:duffing-eq} it is evident that the computations using metric $\norm{.}_{2,-s}$ retain the desired intuition established by Figure \ref{fig:reeb}: the diffusion coordinate \(\chi_{2}\) approximates the energy function of the system, while coordinate \(\chi_{1}\) discriminates between wells of the potential. The gaps in the numerical result are due to finite number of initial conditions $x$ used to evaluate the ergodic quotient map, and irregularities are due to the finite averaging process used to evaluate the projection \(P_{1}\); the Diffusion Maps algorithm can be adaptively tuned to tolerate such errors. When the state space is visualized using pseudo-coloring based on diffusion coordinates, the regions of uniform dynamical behavior are clearly distinguished.

The presented process goes beyond Hamiltonian flows: as an illustration we use a periodically forced 3d fluid flow based on the classical Hill's vortex flow.\cite{Budisic:2012woa} It is a solution to an ODE system on $x = (R,z,\theta) \subset \R^+ \times \R \times \T$:
\begin{align}\label{eq:hill}
  \begin{bmatrix}
    \dot R \\ \dot z \\ \dot \theta
  \end{bmatrix} = \underbrace{
    \begin{bmatrix}
      2Rz \\ 1 - 4R - z^2 \\ c/2R
    \end{bmatrix}
    + \epsilon
    \begin{bmatrix}
      \sqrt{2R} \sin \theta \\
      z(\sqrt{2R})^{-1} \sin\theta \\
      2\cos \theta
    \end{bmatrix}
    \sin 2\pi t}_{A(x,t)},
\end{align}
where parameters  $c$ and $\epsilon$ are swirl and perturbation strengths, respectively. Figure \ref{fig:hill-ss} shows two vortices that exist at $c = \epsilon = 0.3495$ colored based on the colors assigned to components of 
ergodic quotient shown in Figure \ref{fig:hill-eq}.

These two examples demonstrate that functions $\chi_k \in \mathcal F$ are invariant eigenfunctions for the Koopman operator, but whose level sets resemble energy functions. In this sense, instead of finding a function that would provide a ``good'' labeling of ergodic sets, as we did with energy functions in motivational Hamiltonian examples, we constructed a set of such a function from data generated by analyzing a basis for the invariant eigenspace of the Koopman operator. The entire process is made computationally feasible by truncating the set \(\chi_{k}\), therefore discarding the fine-scale features in favor of computability, while retaining coarse-grained features in an organized manner.

\subsection{Infinite-time averages as projections onto eigenspaces}
\label{sec:averaging}

In our presentation of analyses of the state space using eigenfunctions of the Koopman operator, we have assumed we can compute eigenfunctions \(\phi^{(\lambda)}\) by projecting an observable \(f\) onto the eigenspace at \(\lambda\) using the projection operator \(P_{\lambda}\). When \(\lambda = e^{i2\pi \omega}\), i.e., for eigenvalues on the unit circle, we can evaluate the associated projection operator \(P_{\lambda}\) using infinite-time averages. For this reason, we restrict ourselves to the eigenspaces \(E_{\lambda}\) for which \(\abs{\lambda} = 1\) in this section.

The focus of our interest will be the averages
\begin{align}
  \frac{1}{N} \sum_{n=0}^{N-1} e^{-i2\pi \omega n} f \circ T^n(x), \label{eq:finite-Fourier}
\end{align}
which we would want to extend pointwise into the limit as $N \to \infty$. To describe the set of observables $\mathcal F$ which have well defined limits, we start by assuming that the map $T$ conserves a measure $\mu$ on the Borel algebra $\mathcal A$ in $M$. For \(\mu\)-integrable observables \(f\) the finite averages of the form \eqref{eq:finite-Fourier} can be extended into $N\to\infty$ to result in well defined \emph{Fourier averages} $\favg{f}$:
\begin{thm}[Wiener, Wintner]\label{thm:wienerwintner}
  Let $T : M \to M$ preserve a measure $\mu$ on a measurable space $M$. Then the set of points $\Sigma(f) \subset M$ on which the limit 
  \begin{align}
    \favg{f}(x) := \lim_{N\to\infty} \frac{1}{N}
    \sum_{n=0}^{N-1} e^{-i2\pi \omega n} f \circ T^n(x)\label{eq:fourieravg}
  \end{align}
is well defined can be chosen for all $f \in L^1(M, \mu)$ and $\omega \in \R$, independently of $\omega
$, and such that $\mu(M/ \Sigma) = 0$.
\end{thm}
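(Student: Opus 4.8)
The plan is to first settle existence of the limit for a single frequency, which is essentially immediate, and then to pour all the effort into making the exceptional null set independent of $\omega$ — this uniformity is the genuine content of the theorem. For a fixed $\omega$, I would form the skew product $\widetilde T : M \times (\R/\Z) \to M \times (\R/\Z)$, $\widetilde T(x,t) = (Tx,\, t+\omega)$, which preserves $\mu \times \mathrm{Leb}$, and apply Birkhoff's pointwise ergodic theorem to the observable $g(x,t) = e^{-i2\pi t} f(x)$. Since $g(\widetilde T^{n}(x,t)) = e^{-i2\pi t}\, e^{-i2\pi\omega n}\, f(T^n x)$, the Birkhoff averages equal $e^{-i2\pi t}\,A_N^{\omega} f(x)$, where $A_N^{\omega} f := \frac1N \sum_{n=0}^{N-1} e^{-i2\pi\omega n}\,U^n f$ is exactly the average in \eqref{eq:fourieravg}. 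As the prefactor is a nonzero constant, the limit $\favg{f}(x)$ exists for $\mu$-a.e.\ $x$. The defect is that the full-measure set here depends on $\omega$.

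To remove that dependence, I would reduce to $f \in L^2(M,\mu)$ (dense in $L^1$ when $\mu$ is finite) and transfer a.e.\ convergence from this dense class to all of $L^1$ by the Banach principle, which requires a Wiener--Wintner maximal inequality for the operator $f \mapsto \sup_{N,\omega}\abs{A_N^{\omega} f}$. On $L^2$, I would split $f$ via the spectral resolution of $U$ (unitary on each ergodic component, cf.\ \eqref{eq:spectral-resolution-on-attractor}) into its point-spectrum part $f_{\mathrm{pp}}$, lying in the closed span of the Koopman eigenfunctions, and its continuous-spectrum part $f_{\mathrm c}$, whose scalar spectral measure $\sigma_{f_{\mathrm c}}$ has no atoms.

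The point-spectrum part is handled explicitly, exactly as in Theorem \ref{thm:krengel-mean-ergodic-thm}: if $U\phi_j = e^{i2\pi\omega_j}\phi_j$, then $A_N^{\omega}\phi_j \to \phi_j$ when $\omega = \omega_j$ and $A_N^{\omega}\phi_j \to 0$ otherwise, pointwise everywhere. The only frequencies that ever contribute lie in the \emph{countable} set $\{\omega_j\}$, so a single $\mu$-null set suffices for $f_{\mathrm{pp}}=\sum_j c_j\phi_j$, provided one controls the $L^2$-tail of the series uniformly in $N$ and $\omega$.

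The hard part will be the continuous-spectrum piece $f_{\mathrm c}$, where I expect the whole difficulty to concentrate. In mean square one finds $\norm{A_N^{\omega} f_{\mathrm c}}_2^2 = \int \abs{F_N(\theta-\omega)}^2\, d\sigma_{f_{\mathrm c}}(\theta)$ with $F_N$ a Fej\'er-type kernel, and this tends to $\sigma_{f_{\mathrm c}}(\{\omega\}) = 0$ for \emph{every} $\omega$; so there is no trouble in $L^2$. The obstruction is upgrading this to a pointwise statement valid simultaneously across the \emph{continuum} of $\omega$. The naive route — showing $\favg{f}$ is continuous in $\omega$ and then using a countable dense set of frequencies — fails, because the only available modulus of continuity, $\abs{A_N^{\omega} f - A_N^{\omega'} f} \le 2\pi\abs{\omega-\omega'}\,\frac1N\sum_{n=0}^{N-1} n\,\abs{f(T^n x)}$, carries a factor growing linearly in $N$, which swamps the decay. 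Overcoming it requires the uniform estimate $\lim_N \sup_{\omega}\abs{A_N^{\omega} f_{\mathrm c}(x)} = 0$ for a.e.\ $x$; the standard (Bourgain) route controls the supremum over the family of characters $n \mapsto e^{-i2\pi\omega n}$ through a chaining/metric-entropy argument combined with an oscillation inequality, and this is exactly where the real analytic weight of the theorem lies. Intersecting the three common full-measure sets then yields $\Sigma(f)$ with $\mu(M\setminus\Sigma)=0$.
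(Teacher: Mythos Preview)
The paper does not actually prove this theorem: its ``proof'' consists solely of the remark that the original Wiener--Wintner argument contained an error and that correct proofs are compiled in Assani's monograph. So there is no in-paper proof to compare against; your outline is already far more detailed than what the paper offers.

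That said, your strategy is sound in its architecture --- the skew-product reduction for a fixed $\omega$, the maximal inequality to pass from a dense class to all of $L^1$, and the spectral splitting $f = f_{\mathrm{pp}} + f_{\mathrm c}$ with the countability of the point spectrum handling $f_{\mathrm{pp}}$ --- but you are reaching for heavier tools than the theorem requires on the continuous-spectrum piece. Bourgain's chaining/metric-entropy machinery is what one needs for the \emph{return times} theorem, where the averaging sequence is a generic bounded sequence rather than the one-parameter family $(e^{-i2\pi\omega n})_\omega$. For Wiener--Wintner proper, the classical route (and the one Assani presents) is van der Corput's inequality: writing $a_n = e^{-i2\pi\omega n} f(T^n x)$, the shifted correlations $a_{n+h}\overline{a_n} = e^{-i2\pi\omega h}\, (U^h f \cdot \bar f)(T^n x)$ have modulus independent of $\omega$, so
\[
\sup_{\omega}\, \abs{A_N^{\omega} f(x)}^2 \;\le\; \frac{C}{H}\sum_{h=0}^{H-1} \Bigl|\frac{1}{N}\sum_{n=0}^{N-1} (U^h f \cdot \bar f)(T^n x)\Bigr| \;+\; O(H/N).
\]
Birkhoff sends each inner average to $\langle U^h f, f\rangle$ a.e., and Wiener's lemma on the continuous spectral measure gives $\frac{1}{H}\sum_{h<H}\abs{\langle U^h f, f\rangle}\to 0$, yielding $\limsup_N \sup_\omega \abs{A_N^{\omega} f_{\mathrm c}(x)} = 0$ on a single full-measure set. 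This bypasses entirely the failed continuity-in-$\omega$ attempt you correctly flagged, and it is both shorter and more elementary than the Bourgain route you propose.
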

\begin{proof}
  The original proof by \citet{Wiener:1941wy} was found to contain an error. Several correct proofs have been compiled by \citet{Assani:2003us}. 
\end{proof}
Moreover, when \(L^1(M, \mu)\) contains a dense countable set, the convergence set \(\Sigma\) can be chosen independently of \(f\).\cite{Mezic:2004is} When frequency \(\omega = 0\) is chosen, Fourier averages \eqref{eq:fourieravg} are referred to as \emph{ergodic averages}:
  \begin{equation}
    \tilde{f}(x) := \lim_{N\to\infty} \frac{1}{N}
    \sum_{n=0}^{N-1} f \circ T^n(x). \label{eq:ergodicavg}
  \end{equation}
As mentioned before, the concept of eigenmeasures \(\mu_{p}^{(\omega)}\) for  map \(T\) provides means for evaluating the projection operator, to which we now add a trajectory-wise formulation: 
\[
P_{\lambda} f(x) = \favg f(x) = \int_{M} f d\mu_{x}^{(\omega)}.
\]
A recent monograph by \citet{Wichtrey:2010ta} provides a detailed analysis of existence of Fourier averages and their applications in linear, nonlinear, and control systems.

While this presentation would perhaps suffice in the abstract, in practical settings we should take additional interest in:
\begin{inparaenum}[(a)]
\item the influence of choice observable \(f\) on the information contained in eigenfunctions \(P_{\lambda}f\),
\item the ``size'' of the convergence set \(\Sigma\) on which the averages converge, and
\item the rate or error in approximating \(\favg f\)  using finite averages.
\end{inparaenum}

The influence of $\omega$ on the value of $\favg f(x)$ is easily seen by fixing $f$ and $x$ and studying the sequence of (complex) numbers $a_n := f \circ T^n(x)$. The average
\[
\sum_{n=0}^{N-1} e^{-i2\pi\omega n}  a_n
\]
is then just a Discrete Fourier Transform (DFT) of the sequence $a_n$. 

From the basic knowledge of Fourier analysis, the values $\favg f$ will be non-zero only for those $\omega$ that correspond to temporal modes present in the sequence $a_n$, which represent the trace of the observable $f \circ T^n(x)$. Whether $a_n$ contains a mode at $\omega$ depends not only on trajectory $T^n(x)$, but also on the choice of the observable. This is easiest to see when $f$ is chosen as a constant function (if constant functions are in $\mathcal F$). In that case,  $\favg f \equiv 0$  for all non-zero $\omega$, and $\favg[0]{f} \equiv f$, regardless of the underlying dynamics $T$.

\begin{rem}Several concepts in this paper are associated with Joseph Fourier's name. This is understandable, as his ideas are at the root of all of them, but unfortunate as it might imply direct connections where there are none.

Several recent works\cite{Mezic:2004is, Wichtrey:2010ta} use the name \emph{harmonic averages} for \(\favg{f}\), in reference to harmonic analysis of dynamical flows. To avoid confusion with harmonic means of numbers,  we decided to use \emph{Fourier averages} instead, as do Wiener, Wintner, and Assani. This name is connected with the \emph{temporal} Fourier transforms of sequences, as explained above, since for fixed \(x \in M\) and \(f \in \mathcal F\), the function  \(\omega \mapsto \favg{f}(x)\) is the Discrete Fourier Transform of the sequence \(n \mapsto f\circ T^{n}(x)\).

Only through ergodicity, i.e., equality of space and time averages \(\int_{M} f\,d\mu = \tilde f\), do we obtain a connection with \emph{spatial} Fourier transform. Choosing \(f_{k}(x)\) as harmonic functions \(e^{i2\pi k \cdot x}\), where \(k\) is now a wavenumber, we can interpret any sequence \(k \mapsto \tilde f_{k}(x)\) in the ergodic quotient \(\xi\) as the spatial Fourier transform of the averaging ergodic measure \(\mu_{x}\). We refer to the image set of the spatial Fourier transform as the \emph{spatial Fourier coefficients}.

The eigenquotient maps, i.e., \(x \mapsto (\dots, P_{\lambda} f(x), \dots)\), could be justifiably named Fourier quotient maps, in analogy to ergodic/Fourier average dichotomy. Despite our earlier choices of terminology,\cite{Budisic:2012woa} we expect this connection to hold only for eigenvalues \(\lambda = e^{i2pi\omega}\), when \(P_{\lambda} f_{k}(x) = \favg{f}(x)\)  As one could conceivably formulate an eigenquotient map for \(\abs{\lambda} \not = 1\), in this paper we chose to use the term eigenquotient instead.

Finally, the harmonic functions \(f_{k}(x) = e^{i2\pi k \cdot x}\), \(k \in \Z^{d}\), i.e., solutions to Laplace equation \(\Delta f = 0\) on a torus \(\T^{d}\), are sometimes called \emph{Fourier functions} or Fourier harmonics. We use these functions as observables, i.e., functions that are acted on by the Koopman operator, to facilitate evaluation of Sobolev norms. Out of all, this connection in name is the least significant of all presented here, nevertheless, we mention it in this remark to clarify our terminology.
\end{rem}

\begin{figure*}[htb]
  \centering
  \begin{subfigure}[t]{.3\linewidth}\centering
    \includegraphics[height=42mm]{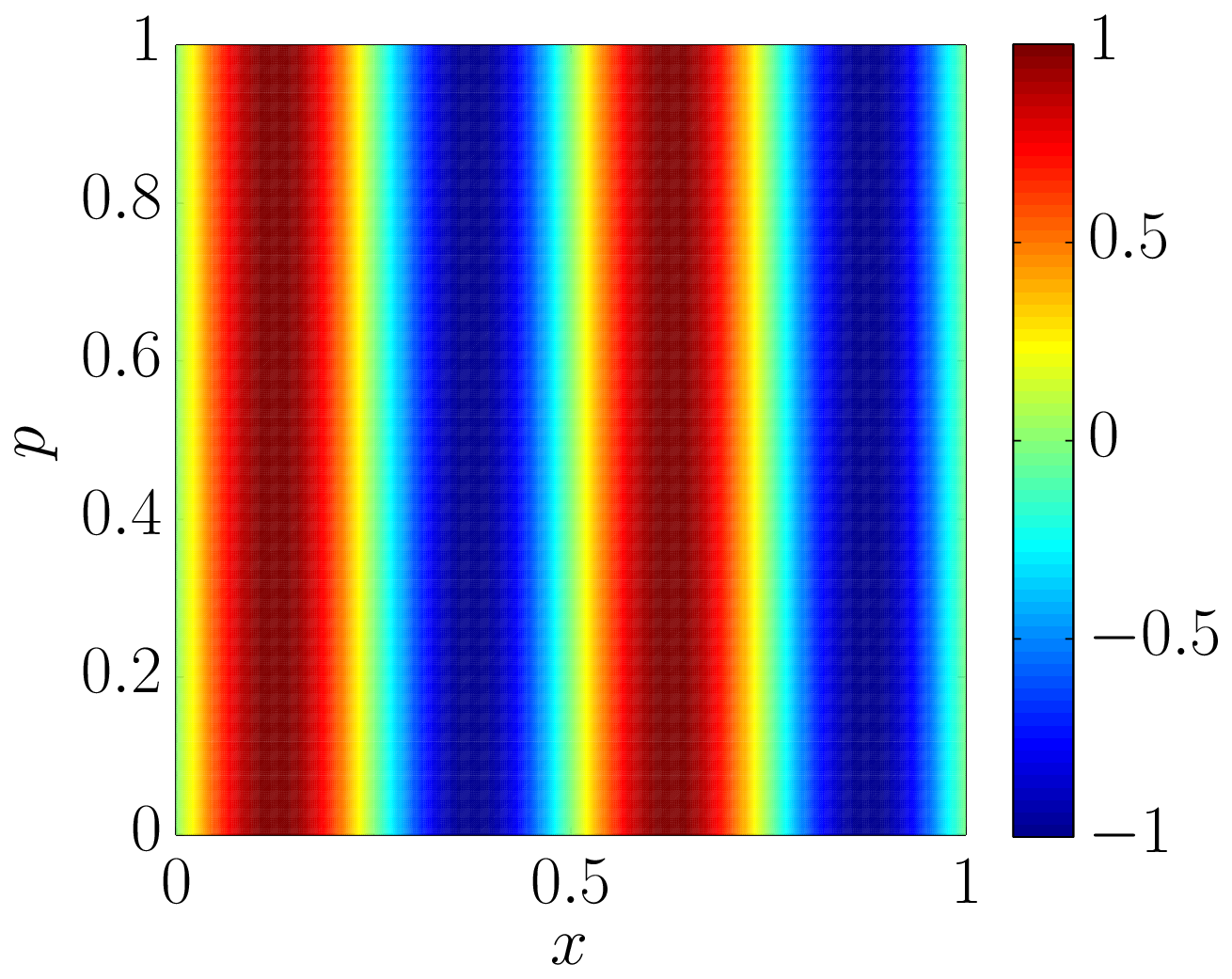} 
    \caption{Level sets of $f_1(x,p) = \sin(4 \pi x )$}\label{fig:favg_obs_f1_f}
  \end{subfigure}
  \begin{subfigure}[t]{.3\linewidth}\centering
    \includegraphics[height=42mm]{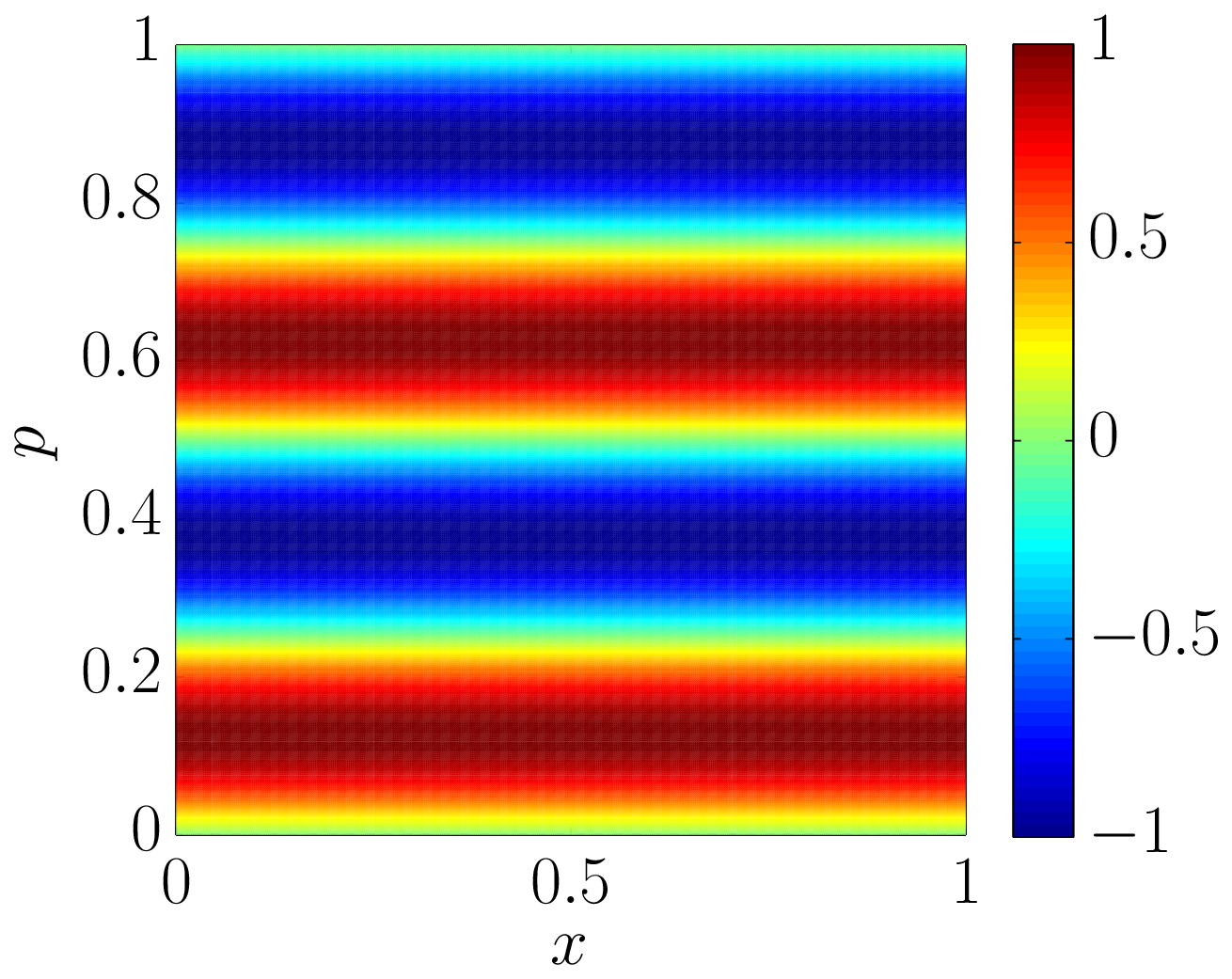} 
    \caption{Level sets of $f_2(x,p) = \sin(4 \pi p )$}\label{fig:favg_obs_f2_f}
  \end{subfigure}
  \begin{subfigure}[t]{.3\linewidth}\centering
    \includegraphics[height=42mm]{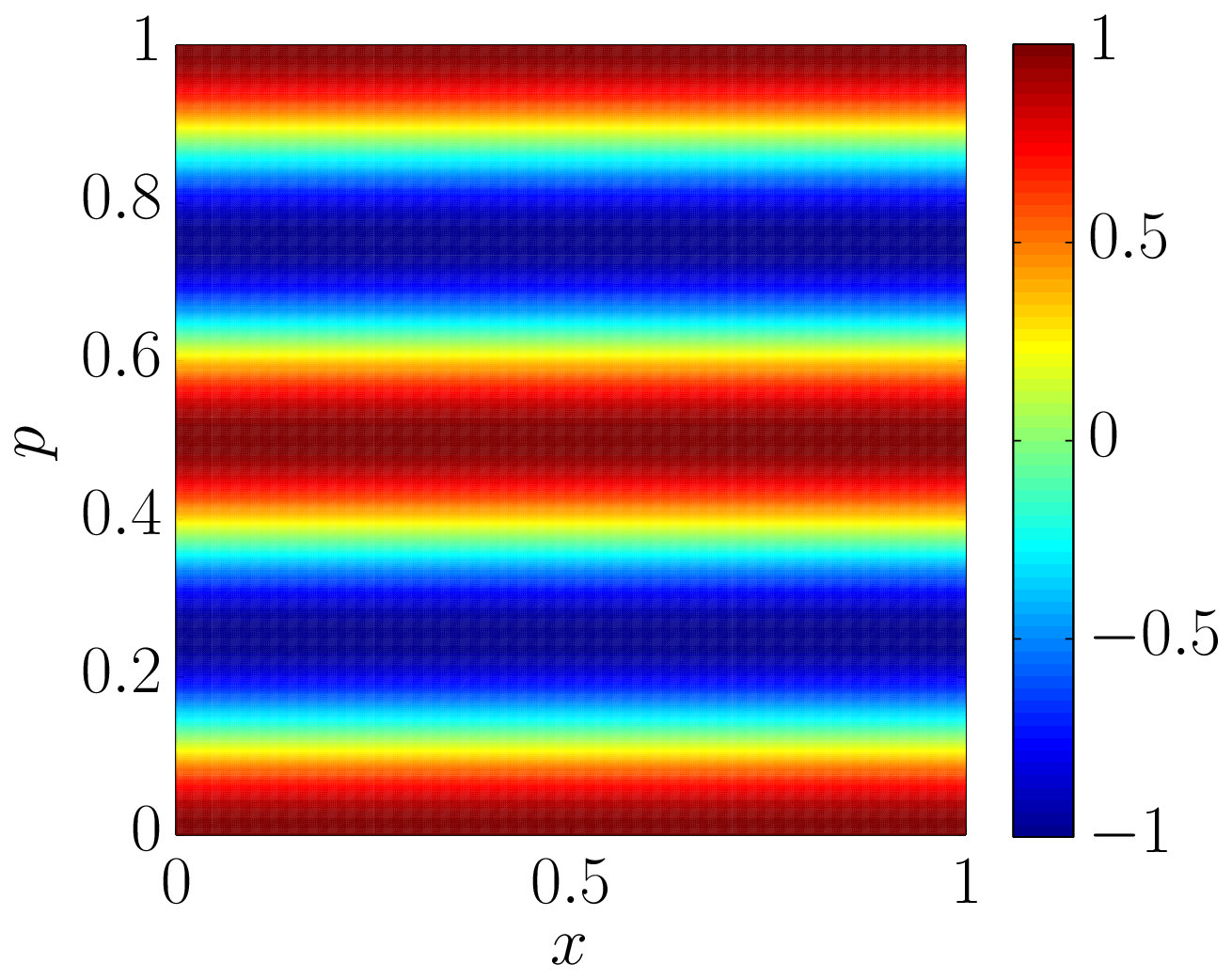}
    \caption{Level sets of $f_3(x,p) = \cos(4 \pi p )$}\label{fig:favg_obs_f3_f}
  \end{subfigure}\\
  \begin{subfigure}[t]{.3\linewidth}\centering
    \includegraphics[height=42mm]{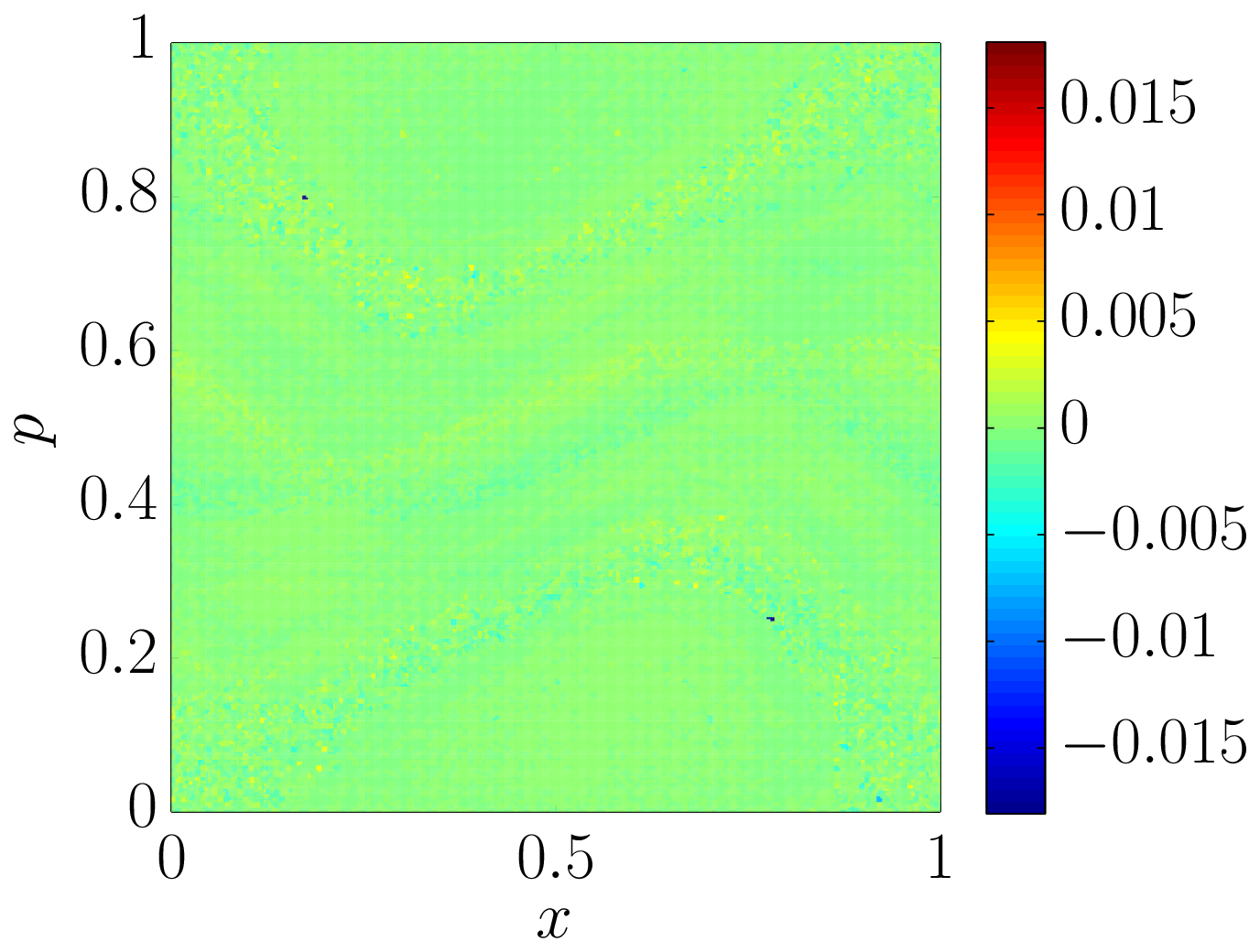} 
    \caption{Level sets of $\tilde f_1(x,p)$ }\label{fig:favg_obs_f1_avg}
  \end{subfigure}
  \begin{subfigure}[t]{.3\linewidth}\centering
    \includegraphics[height=42mm]{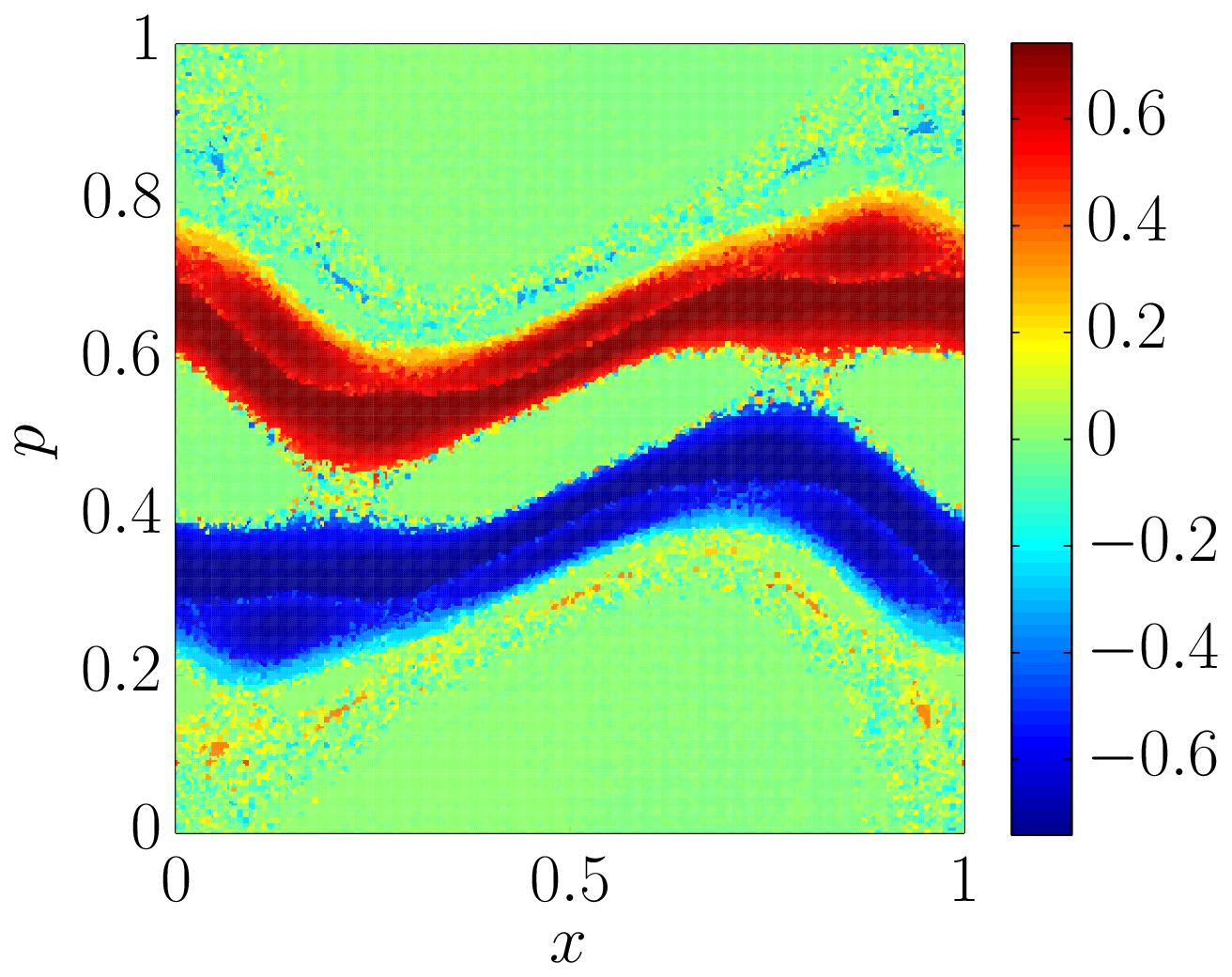} 
    \caption{Level sets of $\tilde f_2(x,p)$ }\label{fig:favg_obs_f2_avg}
  \end{subfigure}
  \begin{subfigure}[t]{.3\linewidth}\centering
    \includegraphics[height=42mm]{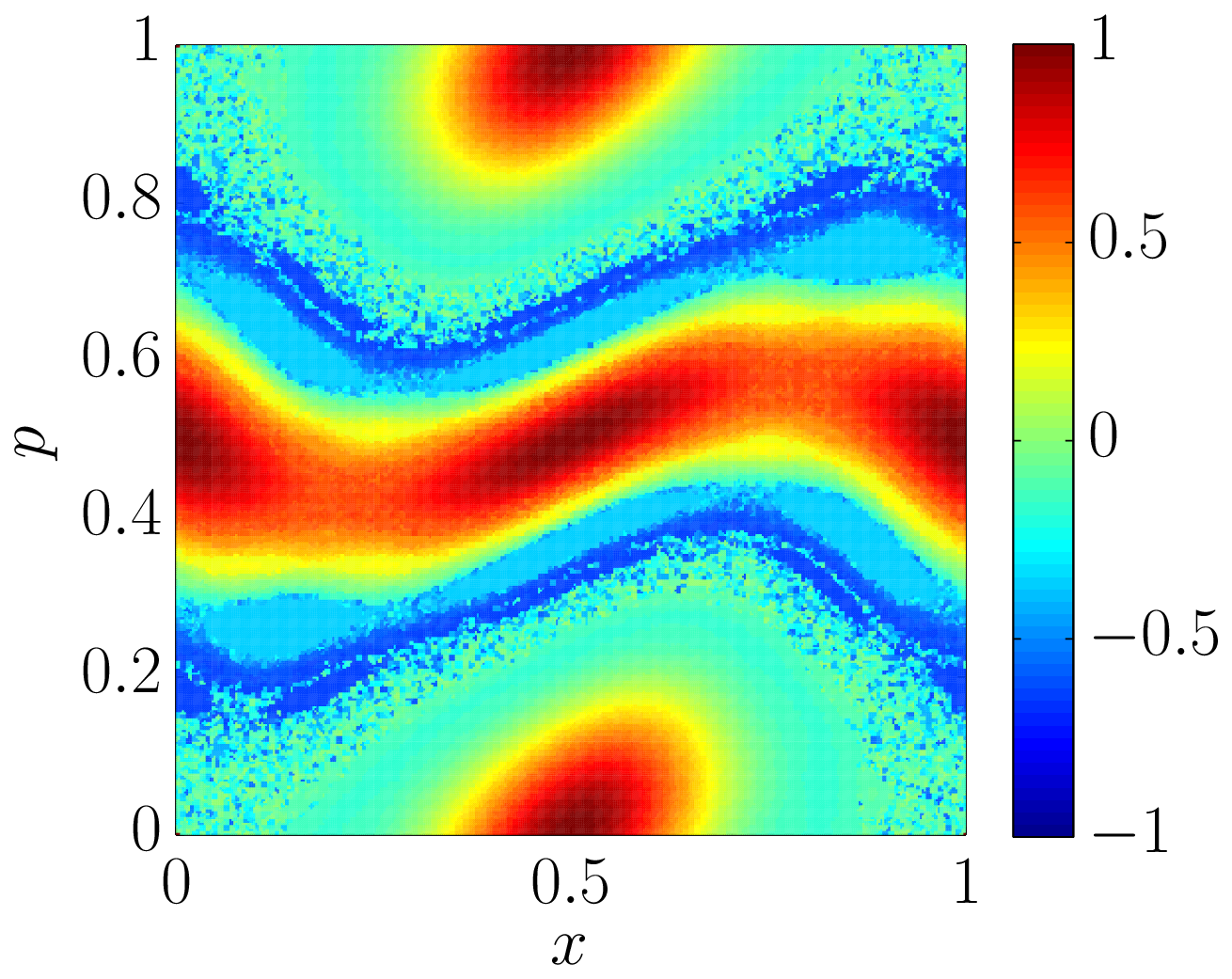} 
  \caption{Level sets of $\tilde f_3(x,p)$ }\label{fig:favg_obs_f3_avg}
  \end{subfigure}
  \caption[Observables before and after averaging.]{Three different observables \(f_{j}\)  (top row) and associated invariant functions \(\tilde f_{j}\) (bottom row), obtained by ergodic averages along trajectories of the Chirikov Standard Map \eqref{eq:std-map} for $\epsilon=0.15$. The pseudocolor is the value of each function.}
  \label{fig:favg-obs-comp}
\end{figure*}

\begin{rem}{}
  Starting from the same observable \(f\), two different eigenfunctions at \(\omega = 0\) can be computed as \(\phi(x)  := \favg[0]f(x)\) and for any \(\omega \not = 0\), \(\psi := \abs{\favg f(x)} \). An example can be found in Figure \ref{fig:stdmap-efunctions} where the first two images were computed by Fourier averages of $f(x,p) = \sin(\pi x - \pi/4) \cos(6\pi p)$ at \(\omega = 0\) and at \(\omega=1/3\), after which the modulus was taken. Both observables are invariant with respect to dynamics.

For a generic observable \(f\), the main difference is that \(\abs{\favg f}(x)\) is sensitive to \(\omega\)-periodic dynamics: even though the information about the phase of \(\omega\)-periodic sets is removed by taking the modulus, \(\abs{\favg f}(x)\) will be zero on all points that are not in any of \(\omega\)-resonant chains, which can be easily explained by DFT interpretation. There is no such restriction on \(\favg[0] f\), whose value can vary between different invariant sets (see Fig.~\ref{fig:stdmap-invariant}). Conversely,  \(\abs{\favg[1/3] f}\)  is zero everywhere except on the period-\(3\) island, where trajectories contain  \(\omega=1/3\) frequency components (Fig.~\ref{fig:stdmap-modulus}).
\end{rem}

The $\omega$ values corresponding to the $e^{i2\pi\omega}$ that are eigenvalues of the Koopman operator will result in non-zero eigenfunction \(f^{(\omega)}\) for at least some observable \(f\) . The $\omega$ values that are not frequencies of any of the eigenvalues will result in $\favg f \equiv 0$. The frequency $\omega = 0$, however, corresponds to eigenvalue $\lambda=1$ of $U$, which is always in the spectrum of the Koopman operator. This frequency is of a particular interest, as the eigenfunctions $\favg[0]f \equiv \tilde f$ are invariant functions for the system.

To illustrate the difference in eigenfunctions obtained by starting from different observables, we again used the Chirikov Standard Map \eqref{eq:std-map} at \(\epsilon=0.15\) . The averages were computed for $\omega = 0$, which projects observables to the invariant eigenspace of $U$. As Figure \ref{fig:favg-obs-comp} shows, the detail that level sets of averaged observables reveal about the state space is highly dependent on the starting observable, however, when a ``good'' observable is chosen, it can reveal a lot about the state space. The lack of intuition about how to select such a ``good'' starting observable led to development of the ergodic quotient analysis, presented in Section \ref{sec:quotient}. The diffusion coordinates \(\chi_{k}\) used therein can be interpreted as constructed observables that reveal detailed information about the state space.

The \emph{convergence set} \(\Sigma\) was established to be of full measure \(\mu\) by  Theorem \ref{thm:wienerwintner}. The measure \(\mu\) is a measure preserved by the system, which is also used to define the space of integrable observables \(L^{1}(M, \mu)\) for which the Wiener-Wintner theorem holds. Depending on the analyzed system, the existence of an invariant measure can be inferred through different arguments. For example, a system \(\dot x = F(x)\) governed by divergence-free vector field \(\nabla \cdot F \equiv 0\) conserves the volume-measure on the state space. In dissipative dynamics, the systems may conserve Sinai-Ruelle-Bowen measures on chaotic sets.\cite{Young:2002vc} Finally, on a compact state space \(M\) the only necessary assumption for existence of an invariant \(\mu\) is continuity of \(T\), by the theorem of Krylov-Bogolyubov.\cite{Katok:1995th}

The Wiener-Wintner theorem holds for \emph{any} invariant measure \(\mu\). Therefore, by choosing the measure used to formulate the space of observables \(\mathcal F = L^{1}(M, \mu)\), we influence the amount of information we can collect about the system by evaluating Fourier averages and the size of the set \(\Sigma\). In applied contexts, we would often want to include as many open sets as possible in the support of measure \(\mu\) chosen: for volume-preserving systems, volume of the state space is often a good choice. Caution is still needed, as there could be a \(\mu\)\nobreakdash-zero, yet dense, non-convergence set $\Sigma^{c}$. Nevertheless, in our experience, simulations of dynamical systems that model physical phenomena do not contain such extreme pathological cases.

\begin{figure}[htb]
  \centering
\includegraphics[width=.25\textwidth]{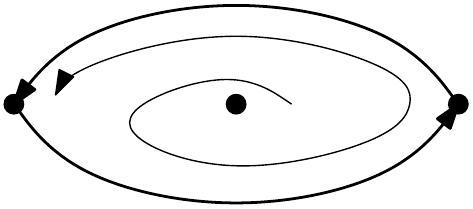}
  \caption[An attracting heteroclinic cycle.]{Attracting heteroclinic cycles may prevent convergence of ergodic averages.}
  \label{fig:attr-heterocl}
\end{figure}

For dissipative systems, one often requires a that the set of Birkhoff-regular initial conditions\cite{DAlessandro:2002vy}, for which Fourier averages of continuous observables  converge, is of positive volume, i.e., that the system preserves a \emph{physical measure}\cite{Young:2002vc} supported on the attractor. In those cases, computing quotient maps using Fourier averages will identify basins of attractions of the attractors, instead of attractors themselves, since the points in the basin will have the same averages as the points on the attractor.

A well-known example\cite{Gaunersdorfer:1992cg} of a dissipative system for which trajectory averages do not converge for almost all open sets of initial conditions contains an attractive heteroclinic cycle (see Fig.~\ref{fig:attr-heterocl}). Ergodic measures are \(\delta\)\nobreakdash-measures supported on each of the fixed points, consequently, any invariant measure will be singular with respect to Lebesgue. Trajectories that approach heteroclinics spend longer and longer times along each of the exterior fixed points, possibly in such a manner that the finite time averages do not converge. Notice that, while the conditions of Theorem \ref{thm:wienerwintner} are formally satisfied, they are almost vacuous for practical purposes, as only the subsets of heteroclinic orbits have well-defined averages, yet the convergence set is still of full measure, due to singular nature of the measure conserved. 

From the authors' experience in applying the averaging technique to dynamical systems that model physical phenomena for practical analysis, finding an appropriate convergence set \(\Sigma\) is rarely a problem. A more practically significant issue comes from the different rates of convergence of finite to infinite averages for points \emph{within} \(\Sigma\).

\begin{figure*}[thb]
  \centering
  \begin{subfigure}[t]{.48\linewidth}\centering
    \includegraphics[height=50mm]{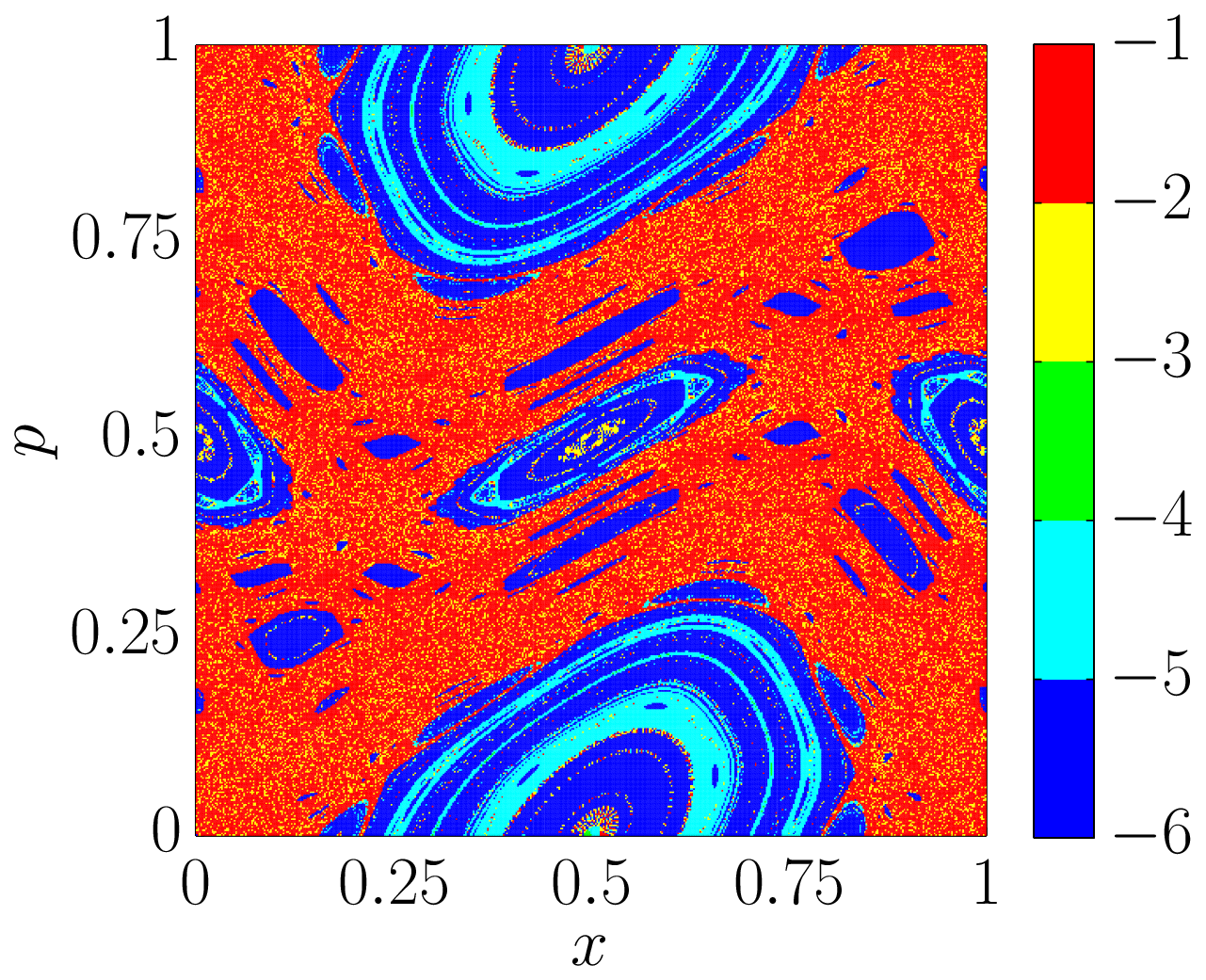}
    \caption{Difference in averages after \(N=10^{5}\) and \(N_{\infty} =  10^{6}\) iterates (\(\log_{10}\)-scale). The bottom of the color scale includes all the values below \(-6\).}
\label{fig:stdmap-conv-error}
  \end{subfigure}
  \begin{subfigure}[t]{.48\linewidth}\centering
    \includegraphics[height=50mm]{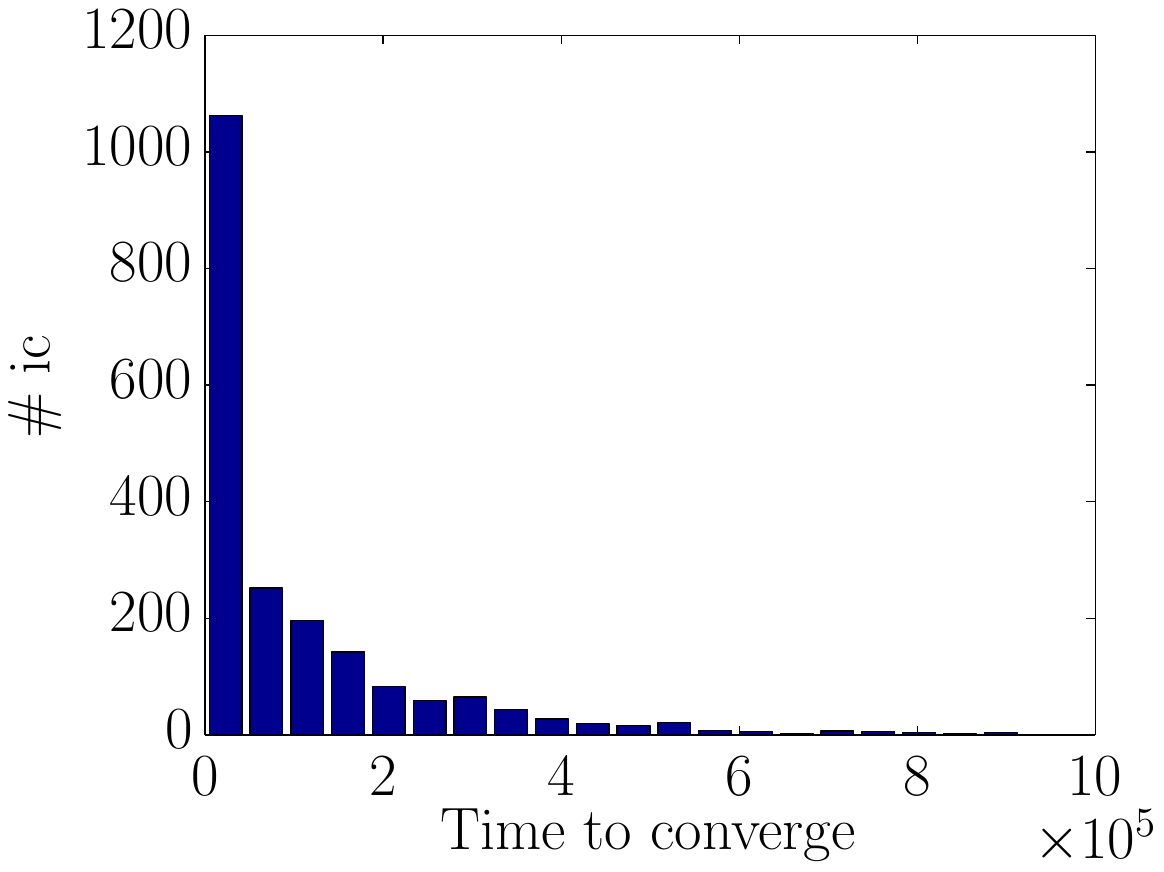}
    \caption{Distribution of number of initial conditions over iterations required for the difference in averages compared to \(N_{\infty} = 10^{6}\) iterates to reach \(10^{-3}\).}
\label{fig:stdmap-conv-times-hist}
  \end{subfigure}\\
  \begin{subfigure}[t]{.48\linewidth}\centering
    \includegraphics[height=50mm]{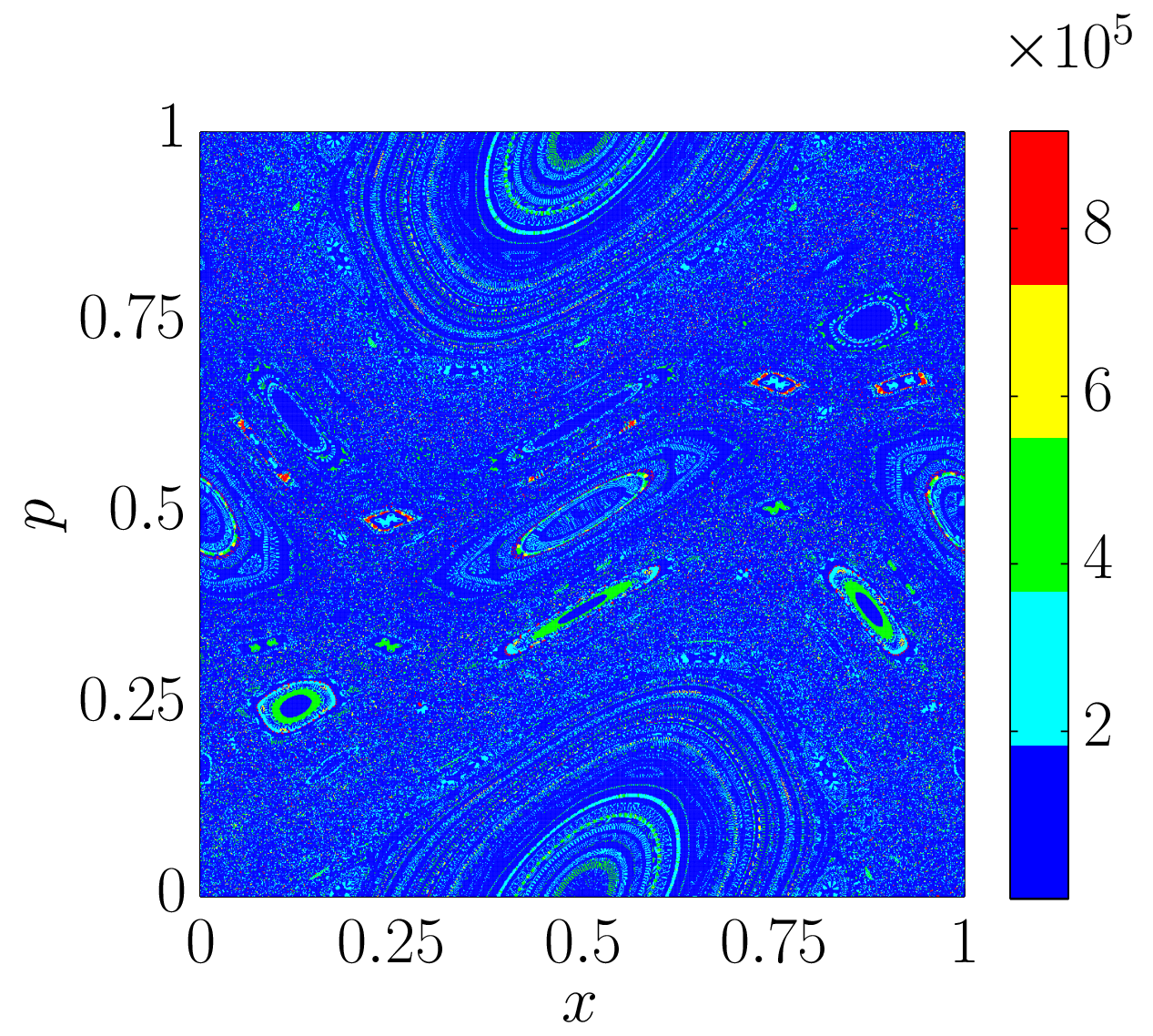}
    \caption{Number of steps required for the difference in averages compared to \(10^{6}\) iterates to reach \(10^{-3}\).}
\label{fig:stdmap-conv-times}
  \end{subfigure}
  \begin{subfigure}[t]{.48\linewidth}\centering
    \includegraphics[height=50mm]{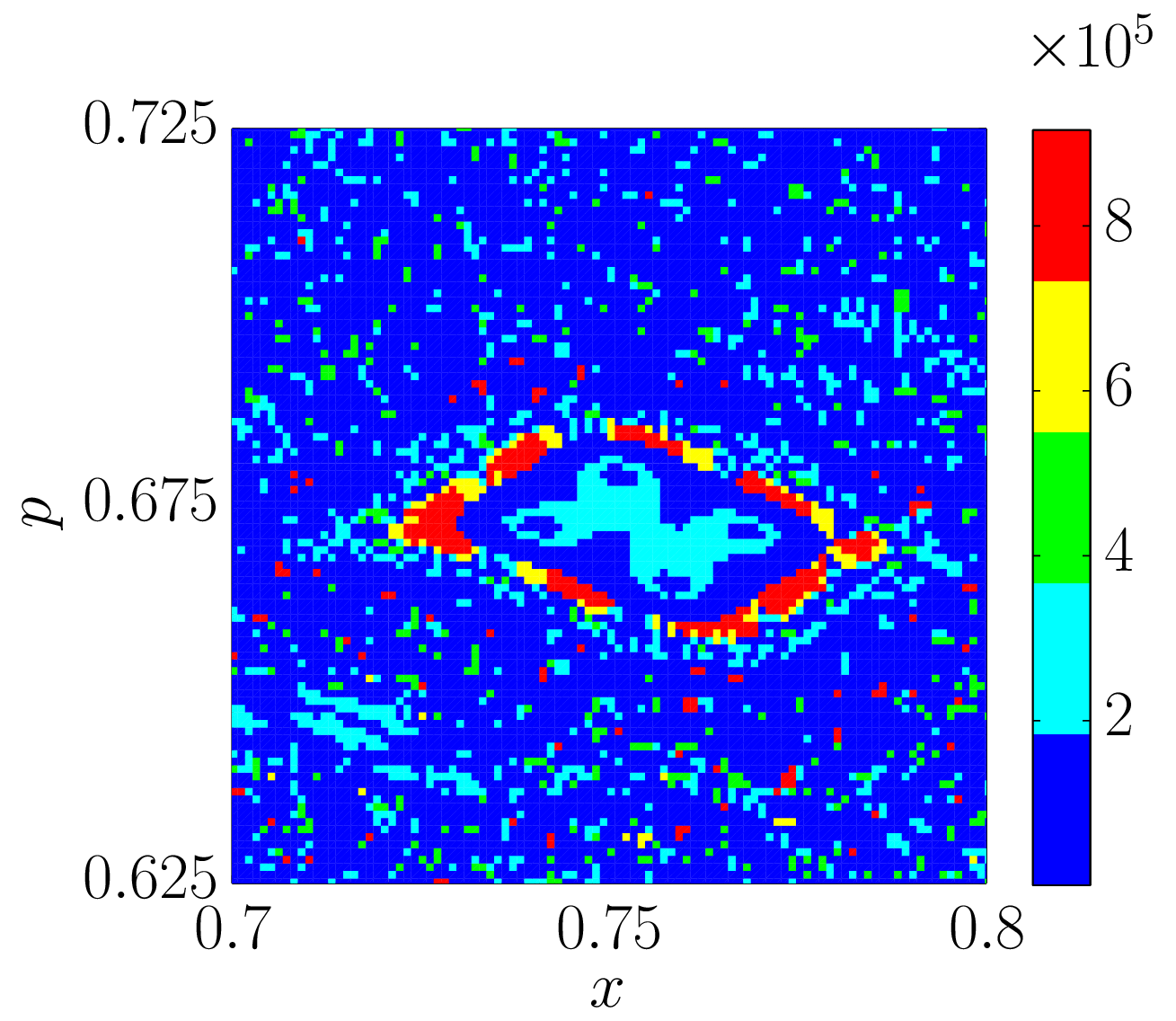}
    \caption{Number of steps required for the difference in averages compared to \(10^{6}\) iterates to reach \(10^{-3}\) (enlarged).}
\label{fig:stdmap-conv-times-zoomed}
  \end{subfigure}
  \caption[Convergence of ergodic averages.]{Convergence of ergodic averages. The difference in averages of Fourier harmonics for trajectory starting at \(x \in M\) is computed as \(\max_{k}\abs{A_{N} f_{k}(x) - A_{N_{\infty}}f_{k}(x)}\) over \(k \in  [-5,5]^{2} \subset \Z^{2}\) . Step \(N_{\infty} = 10^{6}\) was taken as ``true'' infinity. A total of \(2025\) initial conditions, seeded from a uniform rectangular grid, was used. Colors were interpolated from the first \(500\) iterates of each trajectory.} \label{fig:stdmap-conv}
\end{figure*}

The rate of convergence of finite limits 
\[
A_{N} f(x) := \frac{1}{N} \sum_{n=0}^{N-1} f \circ T^n(x), 
\]
and their Fourier counterparts \eqref{eq:finite-Fourier} on the set $\Sigma$ is not uniform. It is a classical result that the rates of convergence over periods of length $N$ can range from exponentially fast, i.e., $\mathcal O( e^{-\lambda N})$ for $\lambda > 0$, for trajectories approaching exponentially stable fixed points, to algebraic convergence $N^{-\alpha}$, where $\alpha > 0$ is arbitrarily small, see, e.g., \citet[, \S 3.2B][]{Petersen:1989uv}. From a practical perspective, in a lot of cases the situation does not look so bleak. Trajectories on periodic orbits and in strongly mixing regions achieve the rates of convergence of $\mathcal O(N^{-1})$ and $\mathcal O(N^{-1/2})$, respectively. The slopes $0 < \alpha < 1/2$ are to be expected near homoclinic and heteroclinic orbits, especially if such orbits are embedded within zones of intermittency, where trajectories get entrained around marginally stable fixed points for long times, before eventually moving away from them.\cite{ChaosBook} Such zones appear, for example, in perturbed hamiltonian and volume-preserving systems.\cite{Perry:1994wl} Studies of volume-preserving systems, in theoretical\cite{Treschev:1998uq} and computational\cite{Budisic:2012woa} contexts, have shown that such regions are small in area.

To illustrate, we plotted convergence errors for the Chirikov Standard Map \eqref{eq:std-map}, simulated for \(\epsilon=0.18\) where the state space contains both regular and mixing regions. This brief analysis is similar to those in literature.\cite{Levnajic:2010gq,Levnajic:2008vo,Budisic:2012woa}
As an indicator of convergence speed, we compared the ergodic average, i.e., Fourier average with \(\omega = 0\), after \(N_{\infty} = 10^{6}\) iterates with the average after (significantly) shorter number \(N \ll N_{\infty}\) of iterates. To avoid making conclusions based on a single observable, a truncated set of Fourier harmonics 
\[
f_{k}(x,p) = \frac{1}{2\pi}\exp\left[i 2\pi (k_{x} x + k_{p} p)\right],
\] for \(k = (k_{x},k_{p}) \in [-5,5]^{2}\subset \Z^{2}\), was used; this is the type of set used to practically approximate the quotient maps described in Section \ref{sec:quotient} and for each trajectory the largest absolute difference in averages over that set was taken as an indication of the convergence error. 

Figure \ref{fig:stdmap-conv-error} shows the error in averages after a fixed time \(N = 10^{5}\) iterates, while Figures \ref{fig:stdmap-conv-times-hist}, \ref{fig:stdmap-conv-times}, and \ref{fig:stdmap-conv-times-zoomed} show the time required for the error to stay within \(10^{-3}\) for \(100\) iterates. It is clear that the speed of convergence varies across the state space, with convergence in chaotic regions slower than in regular regions. We proposed\cite{Budisic:2012woa} that the simulated time is varied for individual trajectory based on the relative convergence error, which resulted in efficient simulation runs with only a small subset of initial conditions, in regions of intermittency, requiring long simulation times.

This section dealt with the problem of extending finite-time averages into the infinite limit, to be able to approximate the limiting measures of empirical distributions. It might be surprising that even explicitly finite time averages find their use in practical applications. In the next section, we demonstrate how to formulate continuous indicators of mixing and ergodicity, and use them in design of feedback control for technical systems.


\section{Continuous indicators of ergodicity and mixing}
\label{sec:ctsinds}

When dynamical systems are analyzed as measure-preserving transformations, \emph{ergodicity} and \emph{mixing property} are among the first concepts discussed in introductory textbooks.\cite{Mane:1987wz,Lasota:1994vt} Let measure \(\mu\) be preserved by the system. In plain language, ergodicity with respect to \(\mu\)  means that sets left invariant by the transformation/flow are either full measure \(\mu\), or \(\mu\)-negligible. The mixing property, which implies ergodicity, means that any set of positive  \(\mu\)-measure will be distributed by the flow according to the mixing measure \(\mu\).

While the standard introductions to these properties\cite{Petersen:1989uv} often include equivalence theorems that provide alternative formulations of ergodicity and mixing, cf. \eqref{eq:erg-space-time} and \eqref{eq:erg-measure}, all the definitions treat them as binary indicators: either the system is ergodic/mixing, or it is not. From the perspective of applied dynamical systems, especially in the context of design of dynamics, having a binary indicator of a desired property is insufficient. Designers prefer to work with continuous indicators, e.g., an ergodicity indicator that takes values in $[0,1]$ where the extremum $0$ would imply classical ergodicity, and the higher values would indicate how far, in some sense, the system is from being ergodic. The continuous indicators of ergodicity and mixing can be constructed from averages of functions along trajectories. \cite{Mathew:2011ev,Scott:2009wh} The averaging process corresponds to an invariant measure, which is then compared to the \emph{a priori} measure $\mu$ to infer how close the system is to being $\mu$-ergodic or $\mu$-mixing.

Designing systems to be ergodic or mixing has a number of practical applications. Consider the case of micro-mixers, devices whose task is to mix two fluids, reactants, such that when the reaction is initiated, there are no ``pockets'' or unused reactants, and the reaction occurs uniformly in the vessel. At usual macro-scales, the fluids are easily mixed just by shaking them; on micro- and nano- scales this is not possible. Instead, micro-mixers use dynamical, advective transport, to reach the mixed state. An example where ergodicity is desirable comes from search-and-rescue missions, where helicopters or airplanes are used to scan a large area for survivors of airplane crashes and capsized boats. It is not trivial to design a flight path that ensures that the entire area is covered and that particular zones are searched more often or more thoroughly: such task can be phrased as design of an ergodic flight path. Both of these problems can be addressed by casting them into a framework of dynamical systems and requiring that the trajectories are mixing or ergodic.

\begin{figure}[htb]
  \centering
  \includegraphics[width=.45\textwidth]{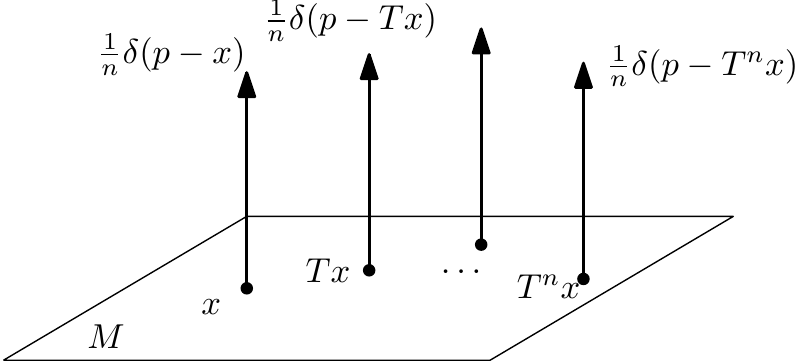}
  \caption[Empirical probability densities.]{For finite time $N$, averaging functionals can be represented by empirical probability measures, whose distributions $c_{x,N}$ can be thought of as sequences of $\delta$ distributions supported on trajectories.}\label{fig:emp-distr}
\end{figure}

While trajectory averages were treated as a technique for computation of projections onto eigenspaces of the Koopman operator, which required their extension into infinity, in this section we will show what can be extracted from finite-time averages \eqref{eq:finite-Fourier}, with \(\omega = 0\), i.e., the finite-time version of ergodic averages.  The average of a bounded observable $f:M \to \C$ over a finite-time trajectory segment, $f \mapsto \frac{1}{N}\sum_{n=0}^N f \circ T^n(x)$, is a linear continuous functional. By the Riesz representation theorem, the finite-average functionals are represented by \emph{empirical measures}, whose distributions, formally defined as 
\begin{align}
  c_{x,N}(p) := \frac{1}{N}\sum_{n=0}^{N-1} \delta[ p - T^n(x) ],
\label{eq:emp-measure}
\end{align}
can be thought of as strings (or ribbons) of \(\delta\)-distributions, supported along the orbits (see Fig.\ \ref{fig:emp-distr}). 

The consequence of defining the empirical distribution using the Riesz representation theorem is the well-known equality between spatial and time averages. With a bit of breadth in notation, the expression $\avg{f,g} = \int_{M} f(p) g(p) dp$ can be used to couple functions \(f\) and measures \(g(p)dp\). In the case of measures with density functions, i.e., absolutely continuous measures, both \(f\) and \(g\) can be taken as functions; when \(g(p)dp = d\mu(p)\) corresponds to a more general measure \(\mu\), \(f\) is taken as a test function, and \(g\) is in the class of distributions, or generalized functions, e.g., for a point mass \(\mu\), \(g\) is a Dirichlet \(\delta\) distribution. Coupling the empirical measure \eqref{eq:emp-measure} with a test function \(f\) yields
\begin{align*}
  \avg{c_{x,N},f} &= \int_{M} \left\{\frac{1}{N}\sum_{n=0}^{N-1} \delta[ p - T^n(x) ]\right\} f(p)dp \\
    &= \frac{1}{N}\sum_{n=0}^{N-1} \int_{M} \delta[ p - T^n(x) ] f(p) dp \\
    &= \frac{1}{N}\sum_{n=0}^{N-1} f[T^{n}(x)].
\end{align*}
If, as an observable, we take the characteristic function of a measurable set $A$, $r_{x,N}(A) = \avg{c_{x,N},\chi_A}$ is the residence time: the fraction of time $[0,N]$ that trajectory $T^n(x)$ spends inside set $A$. Therefore, the empirical measures $\avg{c_{x,N},\cdot}$ induced by averaging functionals are probability measures, for any initial condition $x \in M$ and $N \geq 0$. For any finite time, empirical distributions are not absolutely continuous with respect to the Lebesgue measure. In the limit $N \to \infty$, however, they converge in weak-$\ast$ topology to ergodic measures;\cite{Young:2002vc} their limits might be point masses, on equilibria, singular measures, e.g., on periodic orbits, or absolutely continuous measures, e.g., in mixing regions.

Ergodicity of the flow $T^n$ with respect to a flow-invariant measure $\mu$ can be stated as the condition that for $[\mu]$-a.e.\ $x \in M$, 
\begin{align}
  \lim_{N \to \infty} \avg{ c_{x,N}, \chi_A} = \mu(A),
\label{eq:erg-measure}
\end{align}
for all measurable sets $A$. The state-space points $x \in M$ whose trajectories satisfy the ergodicity condition are called $\mu$-generic. The first step towards a continuous indicator of ergodicity was presented by \citet{Scott:2009wh}, who used a Haar wavelet basis as observables: these wavelets directly relate to indicator functions of a basis for open sets on a rectangular state space. Since wavelet bases are parametrized by scale at which sets are resolved, the constructed ergodic indicator, termed \emph{ergodicity defect}, acted as a coarse-grained version of ergodicity, reflecting an engineering perspective: instruments have finite precisions, therefore, if the equalities \eqref{eq:erg-measure} are satisfied for all sets coarser than precision of the instrument, the system can be thought of as ergodic for the particular application.

Based on a similar idea, \citet{Mathew:2011ev} constructed a different continuous ergodicity indicator. For declaring ergodicity, it is sufficient to verify the condition \eqref{eq:erg-measure} on a basis for the Borel algebra, e.g., euclidean balls $B(x,r) = \{ p \in M : \norm{p - x}_2 < r\}$, whose indicator functions we label $\chi_{(p,r)}$. Integrating the deviations between trajectory averages and measures of sets, we obtain the empirical ergodicity $E_x(n)$ 
\begin{align}
  E_x(n)^{2} := \int_0^R \int_{M} \abs{ \avg{c_{x,n},\chi_{(p,r)}}
    - \mu[B(p,r)]}^2 dp\, dr,\label{eq:ergdef}
\end{align}
where $R$ is selected such that the largest ball includes the entire state space $M$ (cf. pseudodistance \eqref{eq:emp-pseudodistance}). It is almost immediate  that the ergodicity is equivalent to $\lim_{n \to \infty} E_x(n) = 0$ if $x$ is selected as a $\mu$-generic point. Strictly speaking, $E_x$ compares the \(N\to\infty\) limit of empirical measures to the prior $\mu$; to assure that the same behavior occurs almost everywhere, $E_x$ can be integrated along the state space. 

The practical value of $E_{x}(n)$ is in how its constituents are computed:
$\avg{c_{x,n},\chi_{(p,r)}}$ is computed as a time-average of an indicator function, which is very easily computed as the system is being simulated from an ODE. When $\mu$ is the volume measure, quantities $\mu[ B(x,r) ]$ are just volumes of euclidean balls, and are computed using well-known formulas. Consequently, evaluation of $E_{x}(n)$ is simple, as it requires only evaluation of finite-time averages. 

On the other hand, $\mu$ can be a more detailed measure, as in the mentioned case of probabilities of target detection. \cite{Mathew:2011ev,Mathew:2009et} In those cases, evaluating \eqref{eq:ergdef} might be a more complicated effort, requiring careful spatial gridding to control errors in the integral. Instead of measures of spherical sets, the metric $E_{x}(n)$ can be expressed using Fourier coefficients of $\mu$, facilitating the control of spatial scale resolution. The integrand in \eqref{eq:ergdef} can be interpreted as a difference between generalized expansion coefficients, where the basis set is the set of characteristic functions, whose supports are a base for Borel sets:
\begin{align*}
  \mu[B(p,r)] = \avg{ \partial \mu, \chi_{(p,r)} }.
\end{align*}
The \(\partial \mu\) stands for the (formal) density of the prior measure \(\mu\). 

If the basis $\chi_{(p,r)}$ is replaced by a harmonic basis $f_k$, then $\hat \mu(k) := \avg{\partial\mu, f_k}$ are just spatial Fourier coefficients, easily computed to very high orders by Fast Fourier Transform if we know the density \(\partial \mu\) explicitly. At the same time, practicality of evaluation of $\avg{c_{x,n},f_k}$ is not sacrificed. The resulting empirical ergodicity is given by a metric induced by the negative-index Sobolev norm $\norm{.}_{2,-s}$ on the space of distributions $W^{2,-s}$ (cf.\ \eqref{eq:neg-sob-norm}), if both $\partial\mu$ and $c_{x,n}$ are in it.
\begin{align}
  \label{eq:negsob}
  \begin{aligned}
    \norm{ c_{x,n} - \partial\mu }_{2,-s} \\
    &= \sum_{k \in \Z^D} \frac{\abs{ \avg{c_{x,n},f_k} - \avg{\partial\mu, f_k}   }^2}{ [1 + (2\pi\norm{k}_2)^2]^s},
  \end{aligned}
\end{align}
where the state space is, for simplicity, $M \simeq \T^D$, $k \in \Z^D$. Wavevectors $k \in \Z^D$ parametrize harmonic functions $f_k(x) = (2\pi)^{D/2}e^{i 2\pi k \cdot x }$. 

When the order of the Sobolev space $s$ is chosen as $s = (D+1)/2$, $\norm{ c_{x,n} - \partial\mu }_{2,-s}$ and $E_x(n)$ are equivalent\cite{Mathew:2011ev}: there exist constants $\alpha > 0$ and $\beta > 0$ such that 
\[
\alpha \norm{ c_{x,n} - \partial\mu }_{2,-s} \leq E_x(n) \leq \beta \norm{ c_{x,n} - \partial\mu }_{2,-s},
\] 
at all $n$. As a consequence, decay of $\norm{ c_{x,n} - \partial\mu }_{2,-s}$ can be used as the proxy for computing decay of  $E_{x}(n)$ to detect ergodicity, since  $\norm{ c_{x,n} - \partial\mu }_{2,-s}$ is easier to numerically evaluate for most measures $\mu$.

A practical application of $E_{x}(n)$ can be seen on a model search-and-rescue problem, where an Unmanned Aerial Vehicle (UAV), e.g., a small helicopter, would search an area containing a target whose position is estimated by a probability density \cite{Mathew:2011ev}. The search path for the vehicle can be planned using a dynamical system: the searcher is treated as a passive particle, possibly with a non-zero inertia, in a fluid-like flow. The probability density is a product of gaussians, modeling a priori estimates of the target, and discontinuous indicator functions, modeling foliage on the ground where the UAV has no visibility. The goal is to design the dynamics of the flow, such that the trajectory traced out by a particle avoids foliage and explores the feasible area according to probability of finding the target.

The devised control algorithm used $E_{x}(t)$ (in continuous-time setting) as the optimization function in a Hamilton-Jacobi-Bellman framework. Instead of optimizing for decay of $E_{x}(t)$ over a finite horizon $t \in [0,T]$, a greedy approach is chosen, where the optimization horizon is shrunk to the instance by taking $T \to 0$, resulting in a closed-form expression feedback law which relies on finite-time averages of harmonic observables, computed along the path that the searcher traveled. As a result, trajectories designed for a team of UAVs searched the area efficiently, with minimal crossings over the zones with high foliage (Fig.\ \ref{fig:smcsearch}), with further details presented in \cite{Mathew:2009et,Mathew:2011ev}.

\begin{figure*}[htb]
  \centering
  \begin{subfigure}[t]{.29\linewidth}\centering
\framebox[\linewidth][c]{ \raisebox{50mm}{\footnotesize Waiting for permissions. See original publication.}}
    \caption{Target probability distribution and initial searcher positions. Probability density is positive and constant on white region, zero on gray.}
  \end{subfigure}
  \begin{subfigure}[t]{.29\linewidth}\centering
\framebox[\linewidth][c]{ \raisebox{50mm}{\footnotesize Waiting for permissions. See original publication.}}
  \caption{Paths of searchers, showing the searchers sampling the area with positive density.}
  \end{subfigure}
  \begin{subfigure}[t]{.39\linewidth}\centering
\framebox[\linewidth][c]{ \raisebox{50mm}{\footnotesize Waiting for permissions. See original publication.}}
    \caption{Decay of $\norm{ c_{x,n} - \partial\mu }_{2,-s}^{2}$ (labeled by \(\Phi\) in the original paper).}
  \end{subfigure}
  \caption[UAV path planning by decay of empirical ergodicity. \citet{Mathew:2011ev}]{Searching for a target by optimizing the decay of empirical ergodicity \(E_{x}(n)\), through computation of \(\norm{ c_{x,n} - \partial\mu }_{2,-s}\) for several trajectories. (Original in \citet{Mathew:2011ev}, Physica D: Nonlinear phenomena by North-Holland. Reproduced with permission of North-Holland in the format reuse in a journal/magazine via Copyright Clearance Center.)\label{fig:smcsearch}}
\end{figure*}

In addition to quantifying ergodicity, the $W^{2,-s}$ norm can be used to quantify mixing with respect to a target density $\partial\mu$, in which context it was originally developed.\cite{Mathew:2005eq,Mathew:2007vq} In the final design of a  micro-mixer, an initial concentration of fluid reactants $\rho$ was advected by a dynamical system, whose effect was captured by the Perron-Frobenius operator $P^n \rho$. To quantify mixing, the metric $M(n) := \norm{P^n \rho - \partial\mu}_{2,-s}$ was computed with the order $s = 1/2$. By optimizing the dynamics of the fluid flow, a quick mixing of reactants was successfully achieved (Fig.~\ref{fig:sob-mixing}), both in numerical and experimental settings.

\begin{figure*}[htb]
  \centering
\framebox[\linewidth][c]{ \raisebox{50mm}{\footnotesize Waiting for permissions. See original publication.}}
  \caption[Mixing optimization. \citet{Mathew:2007vq}]{Simulation of mixing of reactants in a micro-chamber using dynamics designed by optimizing a $W^{-s,2}$ distance between advected density and uniform target density. (Original in \citet{Mathew:2007vq}, Journal of fluid mechanics by Cambridge University Press. Reproduced with permission of Cambridge University Press in the format reprint in a journal via Copyright Clearance Center.)}
  \label{fig:sob-mixing}
\end{figure*}

The main difference between $M(n)$ and $E_{x}(n)$ is that the mixing metric compares \emph{instantaneous} advected density $P^n \rho$ to the prior $\mu$, which, in the dual Koopman framework, is equivalent to using $U^n f$. Recall that the ergodicity metric used the temporal averages $\frac{1}{N}\sum_{n=0}^{N-1} [U^n f]$ (cf. \eqref{eq:emp-measure}). The other difference between mixing and ergodicity indicators can be interpreted as the requirement on the scales at which measure $\mu$ is sampled: the ergodicity indicator $E_{x}(n)$ gives more weight to larger spherical sets, while in $M(n)$, differences over all sets are weighted the same, regardless of the set size,  due to rescaling of the measure $dx dr$ by the measure of the spherical set to $dxdr / \mu(x,r)$. 
 Therefore, those trajectories that lead to uniform decay of $E_{x}(n)$ will first appear well distributed in $\mu$ over larger sets, and only then will they sample $\mu$ on smaller scales. Conversely, $M(n)$ does not make such a distinction, requiring that the trajectories distribute on all scales equally fast.

In practice, the choice between $E_{x}(n)$ and $M(n)$ is driven by the application: for mixing of fluids, it was important that two reactants were in contact at the instant of reaction initiation, e.g., initiation of burning in the chamber. In the search-and-rescue application, instantaneous behavior was not of the essence, as the goal was to find the target over the entire course of the search mission, not necessarily to have the same (high) probability to locate the target at any particular time instance. In future applications, a similar analysis of the problem would decide which property, ergodicity $E_{x}(n)$ or mixing $M(n)$, would be a more appropriate design criterion.

The indicators for ergodicity and mixing presented in this section are based on representations of ergodic measures in the space of Fourier coefficients. They therefore represent ergodic measures as accurately as it is possible through their Fourier coefficients: numerically, approximations will converge quicker the smoother  the ergodic measure studied. However, it is well known that the shape of the ergodic measure's density does not determine uniquely the dynamical system. In this sense, the indicators are not intrinsic to the dynamical system studied. For example, if one would use a unique ergodic measure \(\mu\) for a map \(T_{1}\) and seek to drive the ergodic measure of another system \(T_{2}\) to match \(\mu\), there would be no guarantee that trajectories of \(T_{1}\) and \(T_{2}\) would be conjugate, without further restrictions on properties of maps \(T_{1}\) and \(T_{2}\).



\section{Conclusions}
We have reviewed the theoretical aspects and applications of the spectral theory
of the Koopman operator in dynamical systems. The use of these
concepts holds promise to provide a theory that extends and
complements tools from geometrical dynamical systems theory that
enabled so much development in science and technology over the last
century. The presented material described three parallel branches of Koopman operator analysis:
\begin{inparaenum}[(i)]
  \item Koopman mode analysis,
  \item eigenquotient analysis, and
  \item indicators of ergodicity and mixing.
\end{inparaenum}
The Koopman modes generalize the notion of linear eigenmodes, known from, e.g., linear mechanical vibration theory, to the nonlinear context, without linearizing the dynamics first. It is interesting that projections onto eigenspaces of the Koopman operator are achieved via an extension of Laplace and Fourier
analysis, and that such extension works in the nonlinear case.
The Koopman mode analysis has proved especially useful in analysis of dynamics in infinite-dimensional state spaces, which were  observed using a high-dimensional measurements, e.g., thermal dynamics of a building system, with a distributed measurement of temperatures or fluids experiments.

The eigenquotient analysis generalizes the analysis of smooth integrals of motion, which is central in theoretical mechanics, to systems which do not have any smooth invariants. The non-smooth eigenfunctions are constructed using infinite-time averaging along trajectories of the dynamical system. The eigenfunctions are used to construct a geometry for families of invariant sets which can be used to extract invariant regions that locally resemble phase portraits of integrable Hamiltonian systems. The same analysis can not only be applied to the study of invariant sets, but also periodic and wandering sets.

Finally, we showed how the trajectory averages can be used even when only finite-time data is available. The values of finite-time averages of functions along trajectories can be used to evaluate quantitative indicators of how closely the system is to accurately sampling a prior measure. Such indicators can be used as optimization criteria in a feedback algorithm for trajectory planning of Unmanned Aerial Vehicles.

The theory reviewed here was introduced and developed mostly for
measure-preserving deterministic systems, including dynamics on the attractor of a dissipative
system. Much more work remains to be done in the case of
dissipative systems and extensions to non-smooth and hybrid
(deterministic/stochastic) case.


\section*{Acknowledgments}

The authors would like to thank the Editor, Dr. Holmes, and anonymous reviewers for their extensive comments that helped improve the manuscript.
The authors were funded by the following grants: ARO W911NF-11-1-0511,
AFOSR FA9550-10-1-0143, AFOSR FA9550-09-1-014, ONR MURI N00014-11-1-0087, ONR N00014- 07-1-0587, ONR N00014-10-1-0611.

\section*{References}
\label{sec:ref}

\end{document}